\theoremstyle{plain}
\newtheorem{theorem}{Theorem}[section]
\newtheorem{corollary}[theorem]{Corollary}
\newtheorem{proposition}[theorem]{Proposition}
\newtheorem{lemma}[theorem]{Lemma}
{\theoremstyle{remark}

\newtheorem{remark}[theorem]{Remark}}
{\theoremstyle{definition}
\newtheorem{definition}[theorem]{Definition}
\newtheorem{notation}[theorem]{Notation}
\newtheorem{example}[theorem]{Example}
}
\numberwithin{equation}{section}
\newcommand{\TKQ}[1]{}
\newcommand{\SEQ}[1]{}
\newenvironment{proofof}[1]{\noindent\emph{Proof of #1.}}{\hfill$\Box$}
\newcommand{\EF}{F^{01}} 
\newcommand{\myi}{E_{\text{\sc{i}}}}
\newcommand{\myii}{E_{\text{\sc{ii}}}}
\newcommand{\myiii}{E_{\text{\sc{iii}}}}
\newcommand{\myv}{E_{\text{\sc{iv}}}}
\newcommand{\myvi}{E_{\text{\sc{v}}}}
\newcommand{\myvii}{E_{\text{\sc{vi}}}}
\newcommand{\myviia}{E_{\text{\sc{vi},a}}}
\newcommand{\myviib}{E_{\text{\sc{vi},b}}}
\newcommand{\N}{\mathbb{N}}
\newcommand{\Z}{\mathbb{Z}}
\newcommand{\C}{\mathbb{C}}
\newcommand{\T}{\mathbb{T}}
\newcommand{\bK}{{\mathbb K}}
\newcommand{\K}{\mathbb{K}}
\newcommand{\F}{{\mathcal F}}
\newcommand{\reg}{_{{\operatorname{rg}}}}
\newcommand{\sing}{_{\operatorname{sg}}}
\renewcommand{\phi}{\varphi}
\newcommand{\Ca}{$C^*$-al\-ge\-bra }
\newcommand{\CA}{$C^*$-al\-ge\-bra}
\newcommand{\shom}{$*$-ho\-mo\-mor\-phism }
\newcommand{\shoM}{$*$-ho\-mo\-mor\-phism}
\newcommand{\makestandard}[2]{\widehat{#1}^{#2}}
\newcommand{\g}[1]{\chi_{#1}}
\newcommand{\idealof}{\lhd}
\newcommand{\gidealof}{\blacktriangleleft}
\newcommand{\toep}[1]{\mathcal T(#1)}
\newcommand{\nap}{admissible pair\xspace}
\newcommand{\naps}{admissible pairs\xspace}
\newcommand{\CK}[1]{(CK#1)}
\newcommand{\mys}{\mathsf S}
\newcommand{\myr}{\mathsf R}
\newcommand{\pv}[2]{p_{#1,#2}}
\newcommand{\pVX}{\pv{V}{\XV}}
\newcommand{\XV}{X} 
\newcommand{\pWY}{\pv{W}{\YW}}
\newcommand{\YW}{Y} 
\newcommand{\ideal}[1]{I(#1)}
\newcommand{\infset}[1][E]{{#1}_{\infty,\mys}^0}
\begin{document}
\title[Semiprojectivity and properly infinite projections]{Semiprojectivity and properly infinite projections in graph $C^*$-algebras}
\date{\today}
\author{S\o ren EILERS}
\address{S\o ren Eilers, Department of Mathematical Sciences\\University of Copenhagen\\DK-2100 Copenhagen \O\\DENMARK
}
 \email{eilers@math.ku.dk}

\author{Takeshi KATSURA}
\address{Takeshi Katsura, Department of Mathematics\\ Keio University\\
Yokohama, 223-8522\\ JAPAN}
\email{katsura@math.keio.ac.jp}
\thanks{This work was supported by the Danish National Research Foundation through the Centre for Symmetry and Deformation (DNRF92). The first named author was further supported by  VILLUM FONDEN through the network for Experimental Mathematics in Number Theory, Operator Algebras, and Topology.}
\subjclass{}

\keywords{}

\begin{abstract}
We give a complete description of which unital graph $C^*$-algebras are semiprojective, and use it to disprove two conjectures by Blackadar. To do so, we perform a detailed analysis of which projections are properly infinite in such $C^*$-algebras.
\end{abstract}

\maketitle

\section{Introduction}
The semiprojective $C^*$-algebras (cf.\ \cite{bb:stc}, \cite{tal:lsppc}) are of paramount importance in the study of classification and structure of $C^*$-algebras, but in spite of decades of interest many basic questions about this class remain open.  In this paper, we study semiprojectivity in the context of graph $C^*$-algebras and completely resolve the question of when a unital such algebra is semiprojective. 

The theory of the class of graph $C^*$-algebras plays a prominent role in our understanding of  semiprojectivity, and in fact, until a recent announcement by Enders (\cite{de:blackadar}), 
the only known examples of nuclear and simple $C^*$-algebras were corners of graph algebras. Indeed, as detailed below,  a complete understanding of semiprojectivity for unital graph $C^*$-algebras very easily leads to the resolution, in the negative, of two long-standing conjectures by Blackadar concerning closure properties of the class of semiprojective $C^*$-algebras.

The fact that all simple and unital graph algebras are semiprojective was proved by Szyma\'nski by a refinement of methods in Blackadar's proof that $\mathcal O_\infty$ is semiprojective. Refining the argument further, Spielberg extended this result to all simple graph algebras with finitely generated $K$-theory. Since all of these results draw on the fact that in  purely infinite $C^*$-algebras all projections are properly infinite it is not surprising that our analysis of the general unital case draws heavily on the notion of properly infinite projections. In fact, we will prove in Theorem \ref{Thm:finite} and Theorem \ref{Thm:finiteII}

\begin{theorem}\label{main}
Let $E$ be a graph for which the vertex set $E^0$ is finite. 
For each $v\in E^0$, set
\[
\Omega_v=\{w\in E^0\mid \text{There are infinitely many edges from }w\text{ to } v\}.
\]
The following are equivalent
\begin{enumerate}[(i)]
\item $C^*(E)$ is semiprojective 
\item $C^*(E)$ is weakly semiprojective 
\item $C^*(E)$ is weakly semiprojective w.r.t. the class of unital graph $C^*$-algebras
\item  For each $v$, $\Omega_v$
does not have (FQ) 
\item  For each $v$, $p_{\Omega_v}$ is properly infinite.
\item Any corner $p C^*(E)p$ with $p\in C^*(E)$ is semiprojective
\item For any gauge-invariant ideal $I\gidealof C^*(E)$, any corner $p(C^*(E)/I)p$ with $p\in C^*(E)/I$ is semiprojective
\item No pair of gauge-invariant ideals $I\gidealof J\gidealof C^*(E,\mys)$ has $J/I$ Morita equivalent to $\K^\sim$ or $(C(\T)\otimes \bK)^\sim$
\end{enumerate}
\end{theorem}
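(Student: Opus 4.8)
The plan is to arrange the eight conditions into one cycle of implications anchored by the combinatorial/structural core \textrm{(iv)}$\Leftrightarrow$\textrm{(v)}$\Leftrightarrow$\textrm{(viii)}, and to isolate the two genuinely hard directions. First I would dispose of the formal implications. Since semiprojectivity implies weak semiprojectivity by the standard lifting argument (Blackadar, Loring), and weak semiprojectivity trivially implies weak semiprojectivity tested only against the smaller class of unital graph $C^*$-algebras, I obtain \textrm{(i)}$\Rightarrow$\textrm{(ii)}$\Rightarrow$\textrm{(iii)}. At the other end, \textrm{(vii)} specializes to \textrm{(vi)} by taking $I=0$, and \textrm{(vi)} specializes to \textrm{(i)} by taking $p=1$, which is legitimate because $C^*(E)$ is unital when $E^0$ is finite. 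This reduces the theorem to closing a loop from \textrm{(iii)} through the combinatorial conditions back up to \textrm{(vii)}.

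The core equivalences I would treat through the parametrization of gauge-invariant ideals. Every gauge-invariant subquotient $J/I$ of $C^*(E,\mys)$ is Morita equivalent to the graph algebra of an explicitly describable subgraph, and reading off $\Omega_v$ for each vertex, the projection $p_{\Omega_v}$ fails to be properly infinite exactly when a finite ``head'' sits over a corner that is either a non-returning matrix-limit piece or a single loop. Concretely I would show that $p_{\Omega_v}$ is not properly infinite iff the relevant subquotient is Morita equivalent to $\K^\sim$ (a finite vertex fed by infinitely many non-returning edges) or to $(C(\T)\otimes\bK)^\sim$ (the loop case); this yields \textrm{(v)}$\Leftrightarrow$\textrm{(viii)}, and translating the same configuration into the purely combinatorial property (FQ) of $\Omega_v$ yields \textrm{(iv)}$\Leftrightarrow$\textrm{(v)}. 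I expect these to be bookkeeping once the ideal structure theory is in hand.

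That leaves the two substantive directions, which close the cycle as \textrm{(iii)}$\Rightarrow$\textrm{(v)}$\Rightarrow$\textrm{(vii)}. For \textrm{(iii)}$\Rightarrow$\textrm{(v)} I would argue contrapositively: if some $p_{\Omega_v}$ is not properly infinite, then by the previous paragraph there is a gauge-invariant subquotient Morita equivalent to $\K^\sim$ or $(C(\T)\otimes\bK)^\sim$, and I would manufacture a sequence of unital graph $C^*$-algebras $B_n$ with a $*$-homomorphism into $\prod_n B_n/\bigoplus_n B_n$ that isolates this subquotient and does not lift. The point that keeps the obstruction inside the restricted test class is that $\K^\sim$ and $(C(\T)\otimes\bK)^\sim$ are themselves not weakly semiprojective, witnessed respectively by telescopes of the graph algebras $M_n\oplus\C$ and of simple cycles of length $n$ (giving $M_n(C(\T))$), both of which are unital graph algebras. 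The delicate part is that weak semiprojectivity need not pass to subquotients, so the non-liftable map must be built at the level of $C^*(E)$ while respecting this class.

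The main obstacle is the positive direction \textrm{(v)}$\Rightarrow$\textrm{(vii)}: assuming every $p_{\Omega_v}$ is properly infinite, I must show that every corner of every gauge-invariant quotient is semiprojective. This is the technical heart, carried out in Theorem~\ref{Thm:finite} and Theorem~\ref{Thm:finiteII}, and refines the Blackadar--Szyma\'nski--Spielberg lifting technique: one constructs partial lifts of the generating Cuntz--Krieger families one gauge-invariant layer at a time, and proper infiniteness of each $p_{\Omega_v}$ is precisely what supplies the room to absorb the relations at vertices receiving infinitely many edges, so that a partial lift defined on a finitely generated approximant can be corrected to an honest $*$-homomorphism. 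Treating corners and the passage to quotients simultaneously, rather than just the full algebra, is what forces the strong form \textrm{(vii)} and is where most of the effort resides.
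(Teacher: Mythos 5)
Your plan reproduces the paper's architecture almost exactly: the formal implications (i)$\Rightarrow$(ii)$\Rightarrow$(iii) and (vii)$\Rightarrow$(vi)$\Rightarrow$(i), the negative direction by mapping $C^*(E)$ into a product of unital graph algebras obtained by truncating the infinite edge bundles and deriving a contradiction with stable finiteness (Theorem \ref{nec}), and the positive direction by the Szyma\'nski--Spielberg-style relative lifting argument in which proper infiniteness of $\psi(p_V)$ supplies the room to lift the edges at infinite receivers (Propositions \ref{XX} and \ref{Prop:Key1}, Theorem \ref{suf}). Two remarks on where you diverge. First, you attach (viii) via a direct combinatorial biconditional (v)$\Leftrightarrow$(viii); the paper instead closes that part of the cycle asymmetrically and more cheaply --- $\lnot$(viii)$\Rightarrow\lnot$(vii) follows from Morita invariance of semiprojectivity for unital full corners applied to $p(J/I)p=p(C^*(E)/I)p$, while $\lnot$(iv)$\Rightarrow\lnot$(viii) is an explicit model-projection construction (Theorem \ref{Thm:finiteII}) --- so you should be aware that the direction ``a subquotient Morita equivalent to $\K^\sim$ or $(C(\T)\otimes\bK)^\sim$ forces some $p_{\Omega_v}$ to fail proper infiniteness'' is not needed in that raw form and is the harder half of your biconditional to prove directly. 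Second, what you dismiss as ``bookkeeping'', namely (iv)$\Leftrightarrow$(v), is in fact the technical core of the paper: it requires the identification of $V(C^*(E,\mys))$ with the graph monoid, the Kirchberg--R{\o}rdam dichotomy for proper infiniteness (Theorem \ref{Thm:notPI:pre}), and the full termination analysis of the \textsc{FindFQ} algorithm (Section \ref{SecPI}); neither direction of that equivalence is routine, and your proof would be incomplete without an argument of comparable depth there.
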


We will return to the property (FQ) mentioned in (iv) below, and draw first the reader's attention to condition (v) which reduces the issue of semiprojectivity to the question of whether or not certain projections are properly infinite. Note also that property (viii) reduces the \emph{a priori} global question of semiprojectivity to the local question of having a subquotient of a certain size to be of a special form. Since in general semiprojectivity passes neither to ideals or quotients, this is a highly surprising result which we discovered only after substantial computer experimentation had convinced us that is was likely true. The other statements compare semiprojectivity to weak semiprojectivity and semiprojectivity of corners, proving that 
all these notions coincide.

To prove Theorem \ref{main}, and to provide a concrete test for semiprojectivity which may be completely automated, we must study the question of which projections in graph algebras are properly infinite. To answer this question we must resolve the relation between various notions of infinity of projections as studied by R\o rdam, and it turns out that in this case many of the general complications shown by him to exist even in simple $C^*$-algebras  do not occur. In fact, we provide the following dichotomies

\begin{theorem}\label{dicho}
Let $p\in C^*(E)$.
\begin{enumerate}[(i)]
\item Either $p$ is infinite, or for any surjection $\pi:C^*(E)\to A$, $\pi(p)$ is stably finite
\item Either $p$ is properly infinite, or for some surjection $\pi:C^*(E)\to A$, $\pi(p)$ is nonzero and stably finite
 \end{enumerate}
\end{theorem}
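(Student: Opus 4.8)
The plan is to reduce both dichotomies to the combinatorics of the gauge-invariant ideal lattice of $C^*(E)$, using three standing tools. First, every gauge-invariant quotient of $C^*(E)$ is again a graph algebra, and the gauge-invariant ideals are parametrized by \naps $(H,S)$, so that quotients correspond to deleting a hereditary saturated set of vertices. Second, each simple subquotient of a graph algebra is either AF, hence stably finite, or purely infinite, hence every nonzero projection in it is properly infinite. Third, a general projection $p$ can be replaced by a \emph{vertex-supported} projection $p_V=\sum_{v\in V}p_v$: since the monoid $V(C^*(E))$ is generated by the vertex classes $[p_v]$, the class $[p]$ is a finite sum of such generators, so after stabilizing (which affects neither stable finiteness nor proper infiniteness, as the relevant corner is unchanged) $p$ is Murray--von Neumann equivalent to some $p_V$. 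This lets me translate everything into the combinatorial dictionary — a vertex-supported projection is stably finite iff its support reaches no cycle with an exit and no infinite emitter, and properly infinite iff every gauge-invariant quotient in which it survives still reaches one — which is the engine for both parts.

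For (i) I argue the contrapositive: if $\pi(p)$ fails to be stably finite for some surjection $\pi\colon C^*(E)\to A$, then $p$ is already infinite. The first step is to remove the assumption that $\ker\pi$ be gauge-invariant. Writing $I=\ker\pi$ and letting $I_0$ be the largest gauge-invariant ideal contained in $I$, the further quotient $C^*(E)/I_0\to A$ only collapses the \emph{cycle-without-exit} constituents, whose subquotients are of the form $M_k$ or $C(\T)\otimes\bK$ and are therefore stably finite; hence non-stable-finiteness of $\pi(p)$ already forces non-stable-finiteness of the image of $p$ in the graph algebra $C^*(E)/I_0$. On that gauge-invariant quotient the dictionary applies: the support of the image of $p$ must reach a cycle with an exit or an infinite emitter in the quotient graph. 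Because quotienting by $I_0$ only deletes a hereditary saturated set and $p\notin I_0$, the witnessing cycle-with-exit or infinite emitter, together with a path reaching it, pulls back to $E$ and is met by the support of $p$ itself, exhibiting a proper subprojection of $p$ equivalent to $p$ and so proving $p$ infinite.

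For (ii) I argue directly. Assume $p$ is not properly infinite. After the reduction to $p_V$, the support $V$ has a \emph{finite direction}: there is a vertex $w$ in the hereditary closure of $V$ whose forward orbit meets no cycle with an exit and no infinite emitter. I then take $I$ to be the gauge-invariant ideal whose hereditary saturated part is the complement of the hereditary set generated by $w$, arranged so that $w$ survives and $p\notin I$, whence $\pi_I(p)\neq 0$; by construction the corner of $C^*(E)/I$ carrying $\pi_I(p)$ is assembled only from AF and $C(\T)\otimes\bK$ pieces and is therefore stably finite. Certifying that failure of proper infiniteness always leaves such a nonzero stably finite residue after killing the reachable infinite part is exactly where the hypothesis is consumed, via the characterization that proper infiniteness is equivalent to reaching an infinite vertex in every surviving gauge-invariant quotient.

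The main obstacle is the combinatorial characterization of proper infiniteness for a general, not vertex-supported, projection, and the two steps resting on it: in (i), controlling non-gauge-invariant ideals by sandwiching $I$ between its gauge-invariant interior and hull and verifying that the intervening $C(\T)\otimes\bK$ and matrix subquotients cannot manufacture infiniteness; and in (ii), the explicit construction of a gauge-invariant $I$ that annihilates the infinite part reachable from $p$ while keeping $\pi_I(p)$ nonzero and stably finite. I expect (ii) to be the harder direction, since it requires not merely locating a finite vertex but organizing the entire reachable infinite part into a single gauge-invariant ideal with genuinely stably finite quotient — precisely where R\o rdam-type pathologies would appear were they not excluded by the graph structure, and where the reduction to vertex-supported projections and the simple-subquotient dichotomy must be combined most carefully.
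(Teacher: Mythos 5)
Your proposal diverges from the paper's proof at the outset, and as written it has three genuine gaps. The first is the reduction of a general $p$ to a vertex-supported projection $p_V$: the monoid $V(C^*(E))$ is generated by the classes of the projections $p_{v,X}=p_v-\sum_{e\in X}s_es_e^*$, and at singular vertices these are \emph{not} sums of vertex classes. (In $\K^\sim$, realized as the graph algebra of two vertices joined by infinitely many edges, the class of $p_{v,\{e\}}$ is a ``co-rank one'' class that cannot be written as a nonnegative combination of the two vertex classes.) Since the projections for which the dichotomies have any content are typically exactly of this form, the paper is forced to work with the models $p_{V,X}$ throughout and to convert their corners into graph algebras of auxiliary graphs (Lemma \ref{stdform}, Propositions \ref{cornerisgraph} and \ref{passto}); your combinatorial dictionary, phrased for vertex supports only, does not reach them without that machinery.

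The second gap is in (i): your passage from an arbitrary ideal $I$ to its gauge-invariant interior $I_0$ rests on the principle that a quotient map whose kernel is built from stably finite pieces cannot create infiniteness. That principle is false in general --- the kernel of $C\mathcal O_2^\sim\to\mathcal O_2$ is even stably projectionless, yet the finite unit maps to an infinite one --- and verifying it in the present situation is essentially the statement being proved. The paper avoids ever pulling infiniteness back through a non-gauge-invariant quotient by arguing forwards: if $p$ is finite, the associated model satisfies the path criterion of Theorem \ref{Thm:notI}(iv), the corresponding corner is an AT algebra, and AT algebras have all quotient images stably finite. The third and most serious gap is in (ii), where you consume the hypothesis ``via the characterization that proper infiniteness is equivalent to reaching an infinite vertex in every surviving gauge-invariant quotient''; that characterization \emph{is} part (ii) combined with part (i), so nothing has been established. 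The paper's actual external input is the Kirchberg--R{\o}rdam theorem of \cite{ekmr:npic}: $p$ fails to be properly infinite if and only if some quotient image of $p$ is nonzero and finite. Even granting that, one must still upgrade ``some ideal'' to ``some gauge-invariant ideal'' --- done in Theorem \ref{Thm:notPI:pre} by replacing $I$ with $J=\bigcap_{z\in\T}\gamma_z(I)$ and embedding $C^*(E)/J$ into $C(\T,C^*(E)/I)$ --- and then upgrade ``finite'' to ``stably finite'' using part (i). Your proposal offers no substitute for either step.
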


\noindent
which hold true also in the non-unital case. We prove these results in Proposition \ref{dichoI} and Theorem \ref{Thm:notPI:pre} below, establishing first (i) by a concrete criterion for testing infinity of projections $p_V$ associated to finite sets of vertices  $V\subseteq E$, and  extending our findings to general projections using the recent understanding of the non-stable $K$-theory of graph algebras in \cite{pamamep:nkga} and  \cite{dhmlmmer:nklpa}. With this, (ii) follows by appealing to the general description of proper infiniteness obtained by Kirchberg and R\o rdam. However, in order to perform the necessary analysis of semiprojectivity, a concrete criterion for which vertex projections $p_V$ are properly infinite is required, and drawing on (ii) above we provide the criterion that $V$ must not have the condition (FQ) to the effect that none of the configurations illustrated in Figure \ref{fqfig} may be found. The details will be given below, but the general idea of the three kinds of configurations which are indicative of the failure of proper infiniteness is that one or several vertices $w$ or $w_i$ have the property that save a possible unique loop based on $w$ in the subscase (TQ), there is at least one, but at most a finite number of paths starting at $w$ and ending in $V$. In the subcase (MQ) one further must require that the $w$ is a singular vertex of the graph, and in the subcase (AQ) a whole sequence of such $w_i$ must be provided.
\begin{figure}
\begin{center}
\begin{tabular}{ccc}
\makebox[3.2cm][t]{\includegraphics[width=3cm]{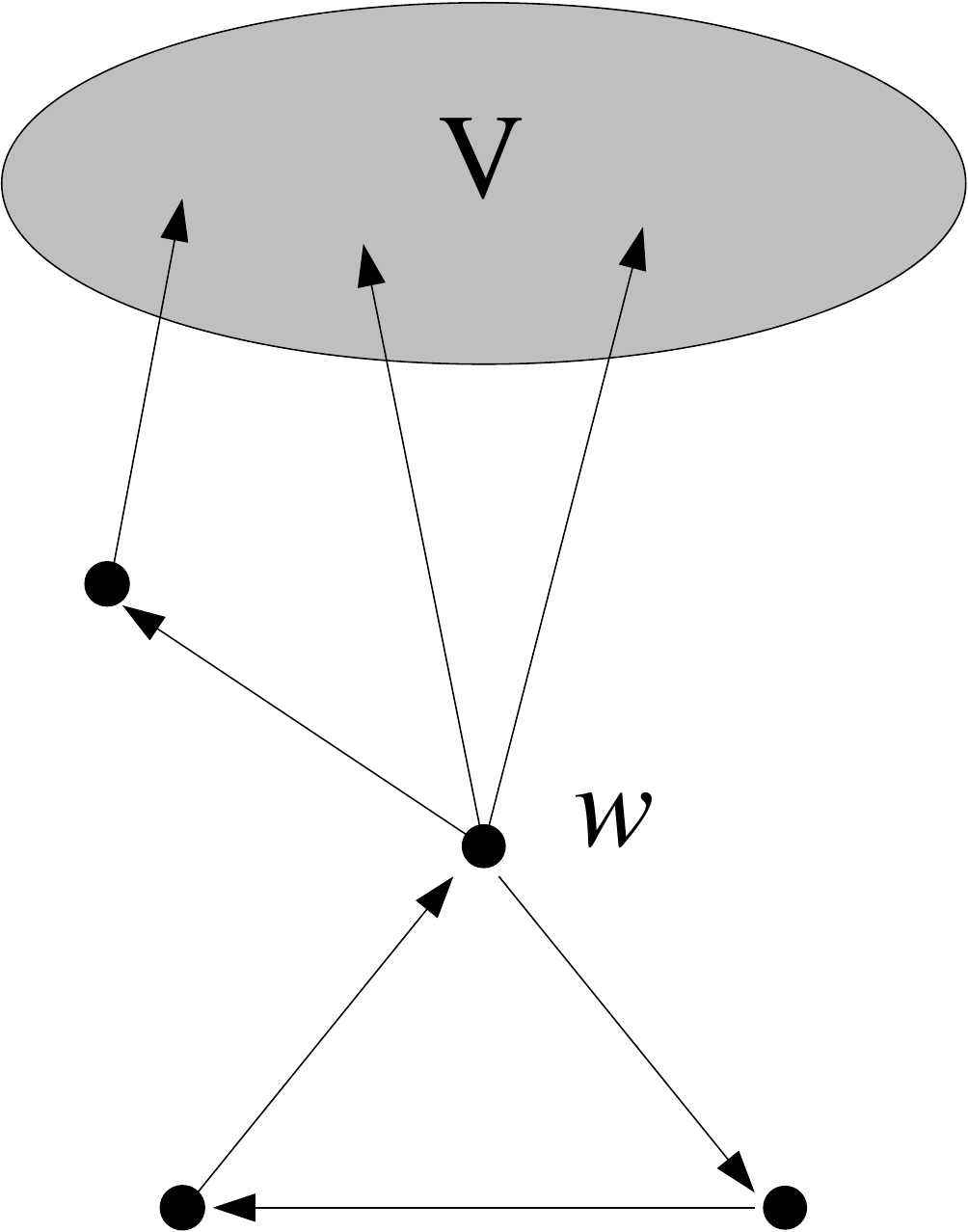}\rule{0mm}{3cm}}&\includegraphics[width=3cm]{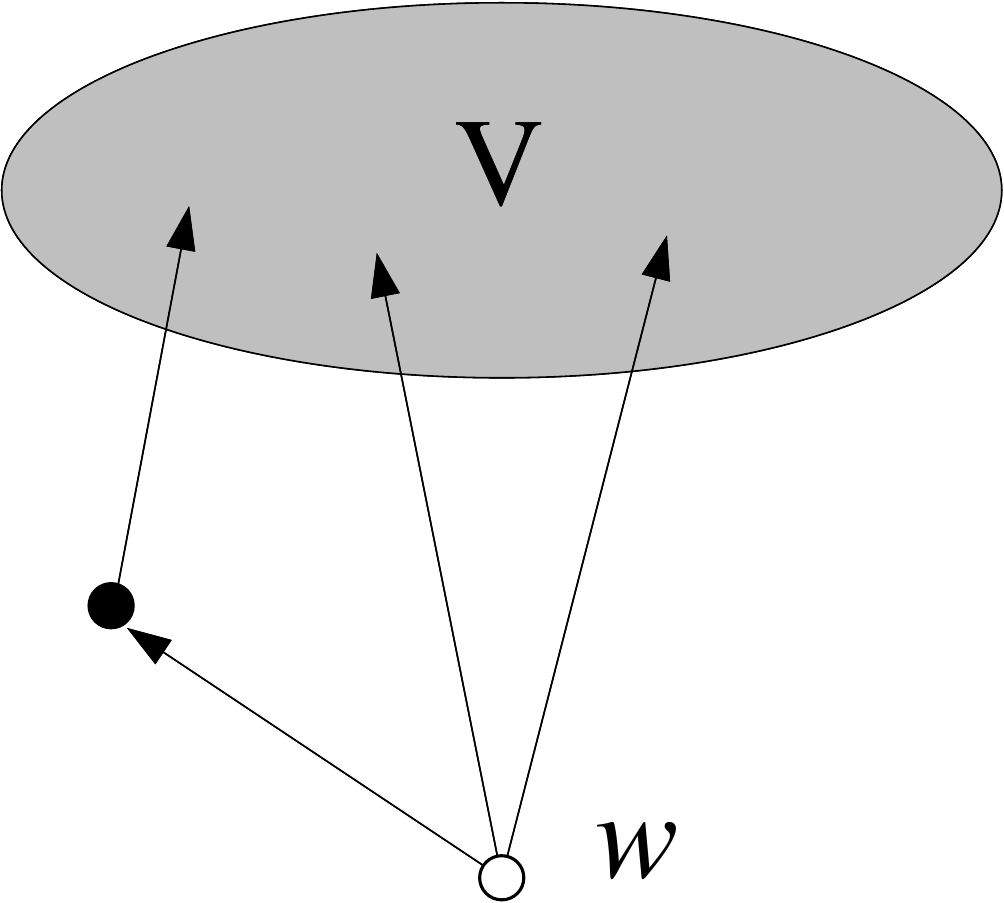}&\includegraphics[width=3cm]{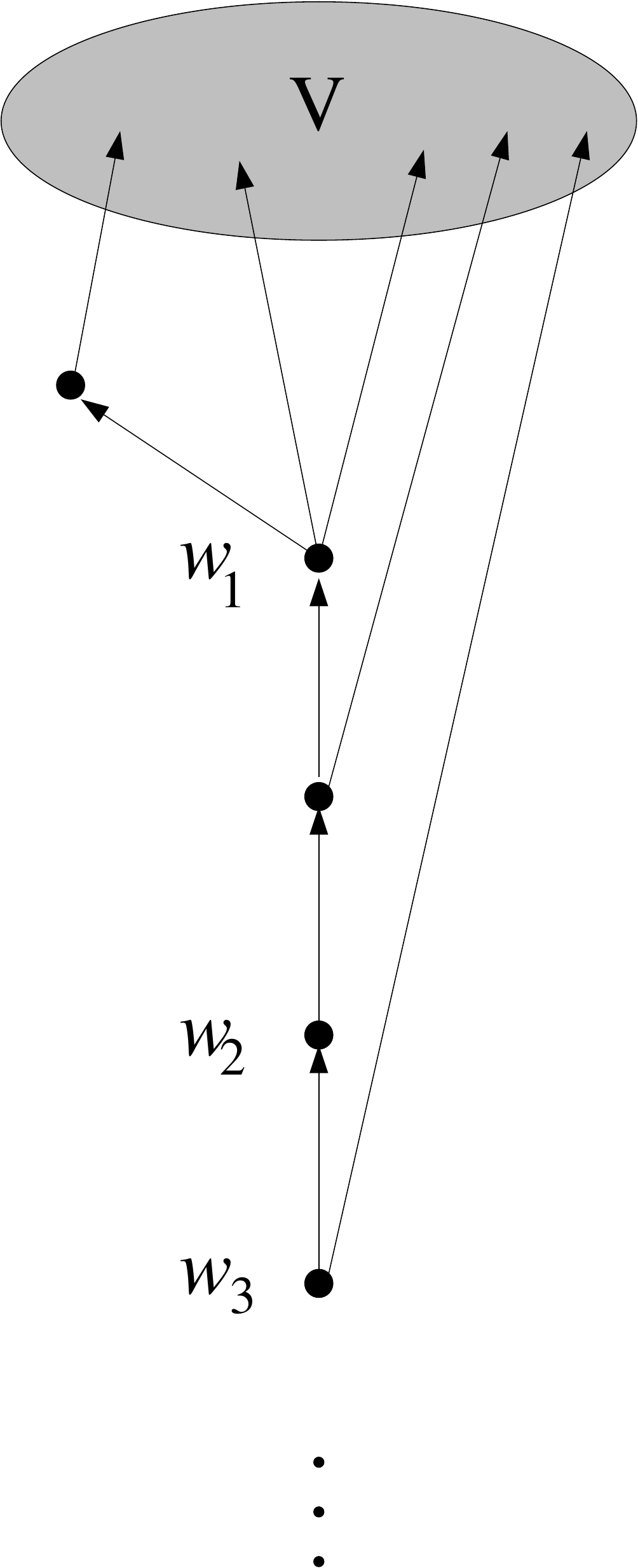}\\
$(TQ)$&$(MQ)$&$(AQ)$
\end{tabular}
\end{center}
\caption{The three cases leading to condition (FQ)}\label{fqfig}
\end{figure}

Finding these configurations in a graph will enable us to find quotients of the $C^*$-algebra in question in which the projection defined by $V$ defines a corner which is either a circle algebra in the case of (TQ), a matrix algebra in the case of (MQ), or an AF algebra in the case of (AQ). As we shall see, this detailed understanding of how a projection may fail to be properly infinite is the key to understanding semiprojectivity in the algebras in question.

Blackadar conjectured at the Kyushu conference in 1999 (cf.\ \cite[4.5]{bb:ssc}) that  the following two related claims concerning permanence of the class of semiprojective $C^*$-algebras are true
\begin{enumerate}[(I)]
\item When an extension
\[
\xymatrix{
{0}\ar[r]&{I}\ar[r]&{A}\ar[r]&{\C}\ar[r]&0}
\]
splits, and $I$ is semiprojective, then so is $A$.
\item When $B$ is a full corner in $D$, and $D$ is semiprojective, then so is $B$.
\end{enumerate}
These conjectures have commanded a lot of attention and both are known to hold true in many interesting special cases. Based on our results, we provide counterexamples to both conjectures in Corollary \ref{bbwaswrongagain} and Example  \ref{bbwasagainnotentirelyright}, respectively.

\section{Preliminaries}\label{prelims}

\subsection{Relative graph algebras}\label{SecRGA}

A graph 
$E=(E^0,E^1,s,r)$ is given as usual by its vertex set $E^0$, its edge
set $E^1$, and range and source maps $r,s:E^1\to E^0$.

\begin{notation}
We set
\begin{align*}
vE^1&=\{e\in E^1\mid r(e)=v\}=r^{-1}(\{v\})\\
E^1w&=\{e\in E^1\mid s(e)=w\}=s^{-1}(\{w\})\\
vE^1w&=vE^1\cap E^1w
\end{align*}
We denote by $E^*$ the set of paths in $E$, including $E^0$ which we think of as paths of length 0, and similarly denote
\begin{align*}
vE^*&=\{\alpha \in E^1\mid r(\alpha )=v\}\\
E^*w&=\{\alpha\in E^*\mid s(\alpha)=w\}\\
vE^*w&=vE^*\cap E^*w
\end{align*}
In a few instances we need to consider only $E^\dagger=E^*\setminus E^0$ and define $vE^\dagger$, $E^\dagger w$, $vE^\dagger w$ similarly. Note that $s(vE^*)$ contains the vertices that emit a path to $v$ and that $r(E^*v)$ contains the vertices that receive a path from $v$. We extend this notation for subsets $V,W\subseteq E^0$ to obtain sets of edges \[
VE^1,E^1W,VE^1W,VE^*,E^*W,VE^*W,VE^\dagger,E^\dagger W,VE^\dagger W\] in the obvious way.
\end{notation}

\begin{definition}
For a graph $E=(E^0,E^1,s,r)$, 
a {\em source} is a vertex $v$ with $vE^1=\emptyset$, and an {\em infinite receiver} is a vertex $v$ with $|vE^1|=\infty$. We call sources and infinite receivers {\em singular} vertices and denote  the set of singular vertices $E^0\sing$. The remaining vertices are 
said to be {\em regular} and denoted $E^0\reg$.
\end{definition}

\begin{definition}[Cf. \cite{psmmt:atc}]
Let $E=(E^0,E^1,s,r)$ be a graph, 
and $\mys$ be a subset of $E^0\reg$.
We define the {\em relative graph algebra} $C^*(E,\mys)$ 
to be the universal \Ca 
generated by mutually orthogonal projections $\{p_v\}_{v \in E^0}$ 
and partial isometries $\{s_e\}_{e \in E^1}$ such that 
\begin{enumerate}[(CK1)]
\item $s_e^*s_e=p_{s(e)}$ for $e \in E^1$, 
\item $s_es_e^* \leq p_{r(e)}$ for $e \in E^1$,
\item $\sum_{e\in vE^1}s_es_e^* =p_v$ for $v \in \mys$. 
\end{enumerate}
\end{definition}

Note that \CK{3} makes sense precisely at $v\in E^0\reg$, where $0<|vE^1|<\infty$. Thus it is crucial that $\mys\subseteq   E^0\reg$ for the definition to make sense.

\begin{notation}
We will draw graphs indicating with ``$\bullet$'' vertices in $\mys$ and with ``$\circ$'' vertices not in $\mys$. If there is an infinite number of edges between two vertices we write ``$\xymatrix{{\bullet}\ar@{=>}[r]&{\circ}}$'' or 
``$\xymatrix{{\circ}\ar@{=>}[r]&{\circ}}$'' depending upon whether or not the emitting vertex is in $\mys$. Note that the receiving vertex is singular, and hence must be marked ``$\circ$''.

For instance, consider the graph
\[
\xymatrix{
{\circ}\ar[r]\ar[dr]&{\bullet}\\
{\circ}\ar@{=>}[r]\ar@(ul,dl)[]\ar@(r,u)[]&{\circ}\ar@<+0.1mm>@/^/[u]\ar@<-0.1mm>@/_/[u]}
\]
The vertices on the top left and the bottom right must be outside of $\mys$ because they are singular, but at the other two vertices we have made a choice, indicated as described above.
\end{notation}

\begin{definition}
$C^*(E,E^0\reg)$ is simply denoted by $C^*(E)$ 
and called the {\em graph algebra} of $E$. 
$C^*(E,\emptyset)$ is called the {\em Toeplitz algebra} of
$E$ and denoted $\toep{E}$.
\end{definition}

The name Toeplitz algebra is derived from the fact that the $C^*$-algebra given by 
\[
\xymatrix{\circ\ar@(ld,lu)[]^(1.1){v_T}^-{e_T}}
\]
is in fact the standard Toeplitz algebra, as is easily seen. 
Graph algebras were defined in \cite{KPRR} and the Toeplitz algebras have mainly been considered as tools for their exploration, cf.\ \cite{tmcsemt:imkga} and \cite{tk:cac}. We are interested in the
intermediate cases where \CK{3} is imposed at some regular vertices,
but not all, for two reasons. First, because of the way the relative
graph algebras interpolate between the graph algebras and the Toeplitz
algebras, they form a natural class in which to work. Second, we will need
to work extensively with sub-$C^*$-algebras of relative graph algebras associated
to subgraphs, and these mostly fail to be graph algebras of subgraphs even when we start with a graph algebra 
$C^*(E)$, as explained in Lemma \ref{subgraph} below.

Any relative graph $C^*$-algebra $C^*(E,\mys)$ is in fact a standard graph algebra $C^*(\makestandard{E}{\mys})$ where $\makestandard{E}{\mys}$ is the graph with
\begin{gather*}
(\makestandard{E}{\mys})^0=E^0\sqcup \{v'\mid v\in E^0\reg\setminus \mys\}\\
(\makestandard{E}{\mys})^1=E^1\sqcup \{e'\mid v\in (E^0\reg\setminus \mys)E^1E^0\}
\end{gather*}
with $r(e')=r(e)$ and $s(e')=s(e)'$; in words, for each regular vertex not in $\mys$ we add a new vertex, and for each edge emitted from such a vertex, a copy is made starting at the new vertex and ending  at the old.  Thus there is no added generality obtained by working with relative graph algebras, but as we shall see it will make our statements quite a lot  cleaner. 

\begin{example}\label{threepoints}
The reader may find it instructive to find the graph algebra description of
\[
\xymatrix{
\circ\ar[r]&\circ\ar[r]&\circ}
\]
and check that the corresponding $C^*$-algebra is $\C\oplus M_2(\C)\oplus M_3(\C)$.
\end{example}

Relative graph $C^*$-algebras come equipped with a \emph{gauge action}
$\gamma:\mathbb{T}\to\operatorname{Aut}(C^*(E,\mys))$ defined by
\[
\gamma_z(p_v)=p_v\qquad \gamma_z(s_e)=zs_e
\]
We say that an ideal is \emph{gauge invariant} when it is fixed by
$\gamma$, and write $I\gidealof C^*(E,\mys)$ is this case. A key application of this notion is the \emph{gauge invariant uniqueness theorem}, which in our setting takes the form described below.

Note that in $C^*(E,\mys)$, all projections of the form $p_v$ or 
\[
p_v-\sum_{e\in X}s_es_e^*
\]
with $v\not \in\mys$ or $X\subsetneq vE^1$ are nonzero. This follows by the universal property of $C^*(E,\mys)$ since standard constructions yield a Hilbert space representation of \CK{1}--\CK{3} where these projections are not zero. In the other direction, the gauge invariant uniqueness theorem says that any gauge invariant map which does not annihilate these projections is in fact injective:

\begin{theorem}\label{giut}
  When $A$ has a circle action $\beta_z$, and $\phi:C^*(E,\mys)\to A$ is given with
\begin{enumerate}[(i)]
\item $\phi$ is equivariant (i.e. $\beta_z\circ \phi=\phi\circ \gamma_z$)
\item For any $v\in E^0$, $\phi(p_v)\not=0$
\item For any $v\in E^0\reg,$ when $X\subseteq vE^1$ and
\[
\phi\left(p_v-\sum_{e\in X}s_es_e^*\right)=0
\]
then $v\in \mys$ and $X=vE^1$,
\end{enumerate}
then $\phi$ is injective.
\end{theorem}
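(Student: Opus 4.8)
The plan is to run the standard gauge-invariant uniqueness argument: transport the injectivity question to the fixed-point algebra of the gauge action through a faithful conditional expectation, and there reduce it to a computation on the AF core that is controlled exactly by hypotheses (ii) and (iii). First I would set $\F=C^*(E,\mys)^\gamma$ and $B=\overline{\phi(C^*(E,\mys))}$. By equivariance (i), $B$ is invariant under $\beta$, so averaging over the circle with normalized Haar measure produces conditional expectations
\[
\Phi(a)=\int_\T \gamma_z(a)\,dz,\qquad \Psi(b)=\int_\T \beta_z(b)\,dz
\]
onto $\F$ and $B^\beta$ respectively. Both are \emph{faithful} ($\Phi(a^*a)=0$ forces $a=0$), and (i) gives the intertwining $\Psi\circ\phi=\phi\circ\Phi$. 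This reduces everything to the core: if $\phi(a^*a)=0$ then $\phi(\Phi(a^*a))=\Psi(\phi(a^*a))=0$, so provided $\phi|_\F$ is injective we get $\Phi(a^*a)=0$, hence $a^*a=0$ and $a=0$. Thus it suffices to prove that $\phi$ is injective on $\F$.

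Next I would use that $\F$ is AF, being the closure of an increasing union of subalgebras $\F_n=\cspa\{s_\alpha s_\beta^*: |\alpha|=|\beta|\le n,\ s(\alpha)=s(\beta)\}$, each a direct sum of matrix algebras. Since a homomorphism on such a direct limit has kernel the closure of $\bigcup_n\ker(\phi|_{\F_n})$, and since a homomorphism of a direct sum of matrix algebras is injective precisely when it does not vanish on any minimal central projection, it is enough to show that $\phi$ does not annihilate any minimal projection of any $\F_n$.

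The heart of the matter is to identify those minimal projections and to match them with hypotheses (ii) and (iii). They are conjugates $s_\alpha q s_\alpha^*$ by path isometries of ``gap'' projections $q=p_v-\sum_{e\in X}s_e s_e^*$ based at $v=s(\alpha)$, where at the cut-off level one takes $X=\emptyset$ so that $q=p_v$. By the remark preceding the theorem, the nonzero such $q$ are exactly those with $v\notin\mys$ or $X\subsetneq vE^1$, and conditions (ii) and (iii) assert precisely that $\phi(q)\neq0$ for every one of these. Since $q\le p_v=p_{s(\alpha)}$ and $\phi(s_\alpha)^*\phi(s_\alpha)=\phi(p_{s(\alpha)})\neq0$ by (ii), the partial isometry $\phi(s_\alpha)$ does not degenerate on $\phi(q)$, so $\phi(s_\alpha q s_\alpha^*)=\phi(s_\alpha)\phi(q)\phi(s_\alpha)^*$ vanishes only if $\phi(q)$ does. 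Hence $\phi$ is nonvanishing on every block of every $\F_n$, injective on each $\F_n$, and therefore injective on $\F$, completing the reduction.

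I expect the main obstacle to be the bookkeeping in this last step: in the \emph{relative} setting \CK{3} is imposed only on $\mys$, so at a regular vertex $v\notin\mys$ the identity $s_\mu s_\nu^*=\sum_{e\in vE^1}s_{\mu e}s_{\nu e}^*$ fails and the genuine extra summand $s_\mu\bigl(p_v-\sum_{e\in vE^1}s_e s_e^*\bigr)s_\nu^*$ survives. Consequently the top-level building blocks of the core really do involve the gap projections rather than merely the vertex projections, and one must verify carefully that the family of gap projections produced by the inductive description of $\F_n$ coincides with the family constrained by (ii)--(iii). (An alternative to this direct core analysis would be to invoke the identification $C^*(E,\mys)\cong C^*(\makestandard{E}{\mys})$ recorded above and apply the gauge-invariant uniqueness theorem for ordinary graph algebras, at the cost of translating (ii)--(iii) into nonvanishing of $\phi$ on every vertex projection of $\makestandard{E}{\mys}$.)
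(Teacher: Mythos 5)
Your argument is correct, but it re-proves the gauge-invariant uniqueness theorem from scratch in the relative setting, whereas the paper disposes of the statement in two lines: it invokes the identification $C^*(E,\mys)\cong C^*(\makestandard{E}{\mys})$ set up in Section \ref{SecRGA} and applies the standard gauge-invariant uniqueness theorem of \cite{bhrs:iccig} to the ordinary graph algebra $C^*(\makestandard{E}{\mys})$, noting only that hypothesis (iii) is exactly what guarantees $\phi(p_{v'})\neq 0$ for the added vertices $v'$, whose projections correspond to the full gap projections $p_v-\sum_{e\in vE^1}s_es_e^*$ at regular $v\notin\mys$. That is precisely the alternative you set aside in your final parenthesis. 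Your direct route buys self-containedness and makes transparent where (ii) and (iii) enter (nonvanishing of $\phi$ on the minimal projections of the AF core, reached through the faithful conditional expectation), at the cost of the block-structure bookkeeping you yourself flag; the paper's route buys brevity at the cost of trusting the dictionary between the relative and amplified pictures. One small point to tighten in your version: hypothesis (iii) is stated only for \emph{regular} $v$, so for a gap projection $p_v-\sum_{e\in X}s_es_e^*$ at a singular vertex $v$ (with $X$ finite, hence proper when $v$ is an infinite receiver) you must argue its image is nonzero separately --- pick $e\in vE^1\setminus X$ and observe that $s_es_e^*$ sits under the gap projection and is Murray--von Neumann equivalent to $p_{s(e)}$, which survives by (ii). This is easy, but it is not literally ``asserted by (ii) and (iii)'' as you claim, and without it the list of nonzero gap projections controlled by your hypotheses is incomplete.
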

\begin{proof}
This follows from the standard gauge invariant uniqueness theorem (\cite{bhrs:iccig}) by replacing $C^*(E,\mys)$ with $C^*(\makestandard{E}{\mys})$ for $\makestandard{E}{\mys}$ as described above. The condition (iii) ensures that $\phi(p_{v'})\not=0$ for the added vertices in $\makestandard{E}{\mys}$.
\end{proof}

\begin{definition}
Let $E=(E^0,E^1,s,r)$ be a graph. A \emph{subgraph} of $E$ is given by $F^0\subseteq E^0$ and $F^1\subseteq
E^1$ such that 
\[
r(F^1)\subseteq F^0\qquad s(F^1)\subseteq F^0
\]
In this case, $(F^0,F^1,s,r)$ is itself a graph which we denote $F$.
\end{definition}

We set
\[
\EF=\{v\in F^0\mid vE^1\subseteq F^1\};
\]
in words, this set contains all vertices in $F$ receiving only edges in $F^1$.

\begin{lemma}\label{subgraph}
Let $F$ be a subgraph of $E$ and $\mys\subseteq E^0\reg$ a
subset. The relative graph $C^*$-algebra $C^*(F,\mys\cap \EF)$ is
canonically isomorphic to the sub-$C^*$-algebra of $C^*(E,\mys)$
generated by $\{p_v\}_{v\in F^0}\cup\{s_e\}_{e\in F^1}$.
\end{lemma}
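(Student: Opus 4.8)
The plan is to construct the canonical isomorphism by exhibiting, in both directions, the relevant universal maps and showing they are inverse to one another; the natural and cleaner route, however, is to verify the hypotheses of the gauge invariant uniqueness theorem (Theorem \ref{giut}) for the map out of $C^*(F,\mys\cap\EF)$. First I would denote by $B\subseteq C^*(E,\mys)$ the sub-$C^*$-algebra generated by $\{p_v\}_{v\in F^0}\cup\{s_e\}_{e\in F^1}$. The generators $\{p_v\}_{v\in F^0}$ and $\{s_e\}_{e\in F^1}$, viewed inside $C^*(E,\mys)$, satisfy the Cuntz--Krieger relations \CK{1} and \CK{2} for the graph $F$ automatically, since these relations are purely local to each edge and are already satisfied in $C^*(E,\mys)$. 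The only subtle point is \CK{3}: in $F$ this relation must be imposed exactly at the vertices of $\mys\cap\EF$. The key observation is that for $v\in\EF$ one has $vF^1=vE^1$, so $\sum_{e\in vF^1}s_es_e^*=\sum_{e\in vE^1}s_es_e^*$; hence whenever $v\in\mys\cap\EF$, relation \CK{3} for $F$ holds in $B$ because it already holds for $E$ at $v\in\mys$. Therefore the universal property of $C^*(F,\mys\cap\EF)$ furnishes a $*$-homomorphism $\phi:C^*(F,\mys\cap\EF)\to B$ sending generators to generators, and $\phi$ is surjective by the very definition of $B$.

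It remains to prove injectivity of $\phi$, which is where the real work lies, and for this I would apply Theorem \ref{giut} to the graph $F$ with the set $\mys\cap\EF$. Equivariance (condition (i)) is immediate: the gauge action $\gamma$ on $C^*(E,\mys)$ restricts to $B$ and fixes each $p_v$ while scaling each $s_e$ by $z$, matching the gauge action on $C^*(F,\mys\cap\EF)$, so $\phi$ intertwines the two circle actions. Condition (ii), that $\phi(p_v)=p_v\neq0$ in $C^*(E,\mys)$ for every $v\in F^0$, follows from the remark preceding Theorem \ref{giut} that all vertex projections $p_v$ are nonzero in $C^*(E,\mys)$.

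The main obstacle is condition (iii), which must be checked relative to the graph $F$ and the set $\mys\cap\EF$. Suppose $v\in F^0\reg$ (regular as a vertex of $F$) and $X\subseteq vF^1$ satisfies $\phi\!\left(p_v-\sum_{e\in X}s_es_e^*\right)=0$; I must show $v\in\mys\cap\EF$ and $X=vF^1$. Here one must be careful that regularity is computed in $F$, not in $E$: a vertex regular in $F$ may be an infinite receiver or have extra incoming edges in $E$. The plan is to transport the vanishing to $C^*(E,\mys)$ and invoke condition (iii) of Theorem \ref{giut} for $E$. If $v\notin\EF$, then $vE^1\setminus F^1\neq\emptyset$, so $vF^1\subsetneq vE^1$ and the projection $p_v-\sum_{e\in X}s_es_e^*$ with $X\subseteq vF^1\subsetneq vE^1$ is of the nonzero form guaranteed by the remark (taking $X\subsetneq vE^1$), giving a contradiction; hence $v\in\EF$, whence $vF^1=vE^1$ and $v$ is also regular in $E$. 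Now applying condition (iii) of Theorem \ref{giut} for $E$ to the vanishing of $p_v-\sum_{e\in X}s_es_e^*$ with $X\subseteq vE^1$ forces $v\in\mys$ and $X=vE^1=vF^1$. Combining, $v\in\mys\cap\EF$ and $X=vF^1$, exactly as required. By Theorem \ref{giut}, $\phi$ is injective, and together with surjectivity this yields the canonical isomorphism $C^*(F,\mys\cap\EF)\cong B$.
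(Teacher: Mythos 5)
Your proposal is correct and follows the same route as the paper: the paper's own proof consists only of the well-definedness check for the generator-to-generator map, resting on exactly your key observation that $vF^1=vE^1$ for $v\in\EF$ so that the relevant instances of \CK{3} are inherited. The injectivity verification via Theorem \ref{giut} that you spell out (including the case analysis on whether $v\in\EF$ in checking condition (iii)) is left implicit in the paper, and your treatment of it is sound.
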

\begin{proof}
We may define a map $\phi:C^*(F,\mys\cap \EF)\to C^*(E,\mys)$ by mapping
generators to generators, since the elements in $C^*(E,\mys)$ satisfy the relevant conditions. Indeed, conditions \CK{1} and \CK{2} are inherited, and 
the relevant instances of \CK{3} hold true since when $v\in \mys\cap \EF$, we have that
\[
p_v=\sum_{e\in vE^1}s_es_e^*=\sum_{e\in vF^1}s_es_e^*
\]

\end{proof}

For a finite subset $V$ of $E^0$, 
we set $p_V=\sum_{v \in V} p_v \in C^*(E,\mys)$. This is a projection
since all the $p_v$ are orthogonal. 

\begin{definition}\label{defnap}
We say a pair $(H,\myr)$ of subsets $H,\myr\subseteq E^0$ is an \emph{\nap} when
\begin{enumerate}[(i)]
\item $H = \{ v \in \myr \mid vE^1 \subseteq E^1H \}$
\item $| vE^1 \setminus E^1H | < \infty$ for all $v \in \myr$.
\end{enumerate}
\end{definition}

We can (cf.\ \cite{tk:iscac}) determine the structure of 
gauge-invariant ideals of the relative graph algebra $C^*(E,\mys)$ by \naps as follows: 

\begin{theorem}
The gauge invariant ideals of the relative graph algebra $C^*(E,\mys)$
are in one-to-one correspondence with \naps $(H,\myr)$ such that
$\mys\subseteq \myr$. When $I$ is the ideal given by $(H,\myr)$, there is a natural isomorphism
\[
C^*(E,\mys)/I\simeq C^*(E\setminus H,\myr\setminus H)
\]
\end{theorem}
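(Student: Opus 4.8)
The plan is to construct the correspondence explicitly in both directions and to reduce the two delicate points—well-definedness of the quotient and exhaustiveness of the correspondence—to the gauge invariant uniqueness theorem (Theorem \ref{giut}).

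\emph{From an ideal to a pair.} Given a gauge-invariant ideal $I\gidealof C^*(E,\mys)$, I would set $H=\{v\in E^0\mid p_v\in I\}$ and let $\myr$ consist of those $v$ for which $vE^1\setminus E^1H$ is finite and the gap projection $p_v-\sum_{e\in vE^1\setminus E^1H}s_es_e^*$ lies in $I$. Condition (ii) then holds by construction, and $H\subseteq\myr$ since for $v\in H$ the gap projection sits below $p_v\in I$. Heredity of $H$ (i.e. $r(e)\in H\Rightarrow s(e)\in H$) follows because $s_es_e^*\le p_{r(e)}\in I$ forces $s_e\in I$ and hence $p_{s(e)}=s_e^*s_e\in I$; together with $\myr$-saturation—if $v\in\myr$ and $vE^1\subseteq E^1H$ then the gap projection equals $p_v$, so $p_v\in I$—this gives condition (i). Finally $\mys\subseteq\myr$: for $v\in\mys$, relation (CK3) rewrites the gap projection as $\sum_{e\in vE^1\cap E^1H}s_es_e^*$, a finite sum of elements of $I$.

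\emph{From a pair to an ideal, and the quotient.} Given an \nap $(H,\myr)$ with $\mys\subseteq\myr$, I would let $I_{(H,\myr)}$ be the ideal generated by $\{p_v\}_{v\in H}$ together with the gap projections at the vertices of $\myr$, and exhibit a $*$-homomorphism from $C^*(E\setminus H,\myr\setminus H)$ into $C^*(E,\mys)/I_{(H,\myr)}$ sending generators to the images of the like-named generators. The only relation needing care is (CK3) at a vertex $v\in\myr\setminus H$: condition (ii) guarantees that the incoming edge set of $v$ in $E\setminus H$, namely $vE^1\setminus E^1H$, is finite so that $v$ is genuinely regular there, while vanishing of the gap projection modulo $I_{(H,\myr)}$ turns (CK3) for $E\setminus H$ into an identity in the quotient. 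A check on generators shows this map is a two-sided inverse of the surjection induced by the obvious map $C^*(E,\mys)\to C^*(E\setminus H,\myr\setminus H)$, giving $C^*(E,\mys)/I_{(H,\myr)}\cong C^*(E\setminus H,\myr\setminus H)$.

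The main obstacle is showing the two assignments are mutually inverse: that distinct pairs give distinct ideals and that every gauge-invariant ideal arises. For the former, the isomorphism just obtained combined with the nonvanishing of vertex and gap projections (the remark preceding Theorem \ref{giut}) recovers $H$ and $\myr$ from $I_{(H,\myr)}$. For exhaustiveness—the genuinely hard direction—I would start from an arbitrary gauge-invariant $I$, form its pair $(H,\myr)$, note $I_{(H,\myr)}\subseteq I$, and study the gauge-invariant ideal $J$ of $C^*(E\setminus H,\myr\setminus H)\cong C^*(E,\mys)/I_{(H,\myr)}$ corresponding to $I$. By the choice of $H$ and $\myr$, the quotient map by $J$ kills no vertex projection $p_v$ (since $v\notin H$ means $p_v\notin I$) and no gap projection, so it satisfies hypotheses (ii)–(iii) of Theorem \ref{giut} and is injective; hence $J=0$ and $I=I_{(H,\myr)}$. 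Gauge invariance is indispensable here, both to make $J$ amenable to Theorem \ref{giut} and because an ideal that is not gauge invariant need not be determined by the vertex and gap projections it contains.

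As an alternative that minimises bookkeeping, I could instead invoke the isomorphism $C^*(E,\mys)\cong C^*(\makestandard{E}{\mys})$ recorded earlier to transport the whole statement to the already-classified case of ordinary graph algebras (\cite{tk:iscac}), reducing the claim to the purely combinatorial verification that \naps for $(E,\mys)$ match the saturated hereditary sets with breaking-vertex data of $\makestandard{E}{\mys}$, with the constraint $\mys\subseteq\myr$ accounting for the adjoined vertices $v'$.
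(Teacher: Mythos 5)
Your proposal is correct, but your primary route is genuinely different from the paper's. The paper disposes of this theorem in two lines: it invokes the known classification of gauge-invariant ideals of ordinary graph algebras from \cite{bhrs:iccig} applied to $C^*(\makestandard{E}{\mys})$, and merely translates the parametrization (replacing the breaking-vertex data $(H,B)$ by $(H,E^0\reg\cup B)$) -- this is exactly the ``alternative that minimises bookkeeping'' you mention at the end. Your main argument instead reproves the classification from scratch in the relative setting: extracting $(H,\myr)$ from an ideal via vertex and gap projections, verifying heredity, $\myr$-saturation and $\mys\subseteq\myr$, and establishing exhaustiveness by applying Theorem \ref{giut} to the induced map into $C^*(E,\mys)/I$. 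All the steps check out (the one point you gloss over -- that $p_v-\sum_{e\in X}s_es_e^*\in I$ with $X\subseteq vE^1\setminus E^1H$ forces $X=vE^1\setminus E^1H$ and $v\in\myr$ -- follows by the same $s_es_e^*\leq$ gap-projection argument you already use for heredity, so it is a genuine completion rather than a gap). What the paper's route buys is brevity and consistency with how every other structural fact about $C^*(E,\mys)$ is derived in the text; what your route buys is self-containedness and a proof that makes visible exactly where gauge invariance and the uniqueness theorem enter, at the cost of redoing the combinatorial bookkeeping that the passage to $\makestandard{E}{\mys}$ was designed to avoid.
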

\begin{proof}
Follows from the standard description of gauge invariant ideals of graph algebras  (\cite{bhrs:iccig}) by replacing $C^*(E,\mys)$ with $C^*(\makestandard{E}{\mys})$ for $\makestandard{E}{\mys}$ as described above, and instead of $(H,B)$ with $B$ the breaking vertices taking $(H,E^0\reg\cup B)$.
\end{proof}

The reader is asked to note that part  (i) of Definition \ref{defnap} implies that $H\subseteq \myr$, and that
\begin{enumerate}[(I)]
\item  $H$ is \textbf{hereditary},
i.e. that  $HE^1 \subseteq E^1H$ (in words, that it is impossible to enter $H$ from the outside).
\item $H$ is \textbf{$\myr$-saturated}, i.e. that $vE^1\subseteq HE^1$ for $v\in \myr$ only when 
$v\in H$ (in words, that if $v\in \myr$ receives only from $H$, then $v$ is already in $H$)
\end{enumerate}
Part (ii) of Definition \ref{defnap} is a finiteness condition which is equivalent to the well-known concept of \textbf{breaking vertices} for graph algebras, but is much more convenient when working with relative graph algebras. Let us look a few examples.

\begin{example}\label{recexx}
We consider graphs $\myi$, $\myii$, and $\myiii$ given by
\[
\raisebox{-1cm}[1.1cm]{}
\xymatrix{
{\bullet}&{\circ}\\
\bullet\ar@(lu,ld)[]\ar@(l,d)[]\ar[u]_(0.2){w_0}_(0.9){v_0}
&\circ\ar[l]\ar@{=>}[u]^(0.2){w_1}^(0.9){v_1}
}
\qquad
\xymatrix{
{\circ}&{\circ}\\
\bullet\ar@(lu,ld)[]\ar@(l,d)[]\ar@{=>}[u]_(0.2){w_0}_(0.9){v_0}&\circ\ar[l]\ar@{=>}[u]^(0.2){w_1}^(0.9){v_1}
}
\qquad
\xymatrix{
{\bullet}\ar@(l,u)[]\ar@(ul,ur)&{\circ}\\
\bullet\ar@(lu,ld)[]\ar@(l,d)[]\ar[u]_(0.2){w_0}_(0.9){v_0}&\circ\ar[l]\ar@{=>}[u]^(0.2){w_1}^(0.9){v_1}
}
\bigskip
\]
as well as $\myv$, $\myvi$, and $\myvii$ given by
\[
\rule[-1cm]{0cm}{2cm}
\xymatrix@C-=4mm{
&{\circ}&\\
\bullet\ar@(lu,ld)[]\ar@(l,d)[]\ar[ur]_(0.3){w_0}^(1.1){v_0}&&\circ\ar[ll]\ar@{=>}[ul]^(0.3){w_1}
}
\qquad
\xymatrix@C-=4mm
{
&{\circ}\ar@{=>}@(ul,ur) []&\\
\bullet\ar@(lu,ld)[]\ar@(l,d)[]\ar[ur]_(0.3){w_0}&&\circ\ar[ll]\ar@{=>}[ul]^(0.3){w_1}_(0.9){v_0}
}
\qquad
\xymatrix@C-=4mm{
&{\circ}\ar@(ul,ur)[]^(0.9){v_0}\ar@(l,u)&\\
\bullet\ar@(lu,ld)[]\ar@(l,d)[]\ar[ur]_(0.3){w_0}&&\circ\ar[ll]\ar@{=>}[ul]^(0.3){w_1}
}
\bigskip
\]
The choice of $\mys$ indicated is in each case identical to the set of regular vertices, so that the relative graph algebras are in fact graph algebras. In each case, let us consider $H=\{w_0,w_1\}$ noting that it is always hereditary, and consider our options for choosing $\myr$ such that $(H,\myr)$ becomes an \nap\ with $\mys\subseteq \myr$. For all of the graphs $\myi$--$\myiii$, if we were to include $v_1$ in $\myr$, $H$ would not be $\myr$-saturated. Similarly, $v_0\not\in \myr$ for $\myi$, but since in this case $v_0\in \mys$, there is no way to complement $H$ as desired in this case. For $\myii$ we can take $\myr=H$ and for $\myiii$ we can take $\myr=H\cup\{v_0\}$, and these are the unique choices. 

Turning to the similar question in $\myv-\myvii$ we see that $\myr=H$ is the only choice for $\myv$ to assure that $H$ is $\myr$-saturated. In the remaining two cases this aspect is not a problem, but for $\myvi$ we again see that $\myr=H$ is the only choice because of condition (ii) of Definition \ref{defnap}. For $\myvii$, however, both $(H,H)$ and $(H,H\cup\{v\})$ are \naps.
\end{example}

\subsection{Properly infinite projections}\label{SSecPre}

We prepare now some notation about projections. 

\begin{definition}
For two projections $p,q$ in a \Ca $A$, 
we write $p \sim q$ if they are Murray-von Neumann equivalent, 
that is, there exists $v \in A$ satisfying $v^*v=p$ and $vv^*=q$. 
We write $p \precsim q$ 
if there exists a projection $p' \sim p$ with $p' \leq q$. 
\end{definition}

$V(A)$ denotes all equivalence classes $[p]$ of projections in
$A\otimes \bK$ by the relation ``$\sim$''. As usual, $V(A)$ becomes a semigroup (in fact, an abelian monoid) by letting the sum of $[p]$ and $[q]$ equal $[p\oplus q]$.
We  write $[p]\leq [q]$ when
$p\precsim q$.

\begin{definition}
A projection $p$ is said to be 
\begin{enumerate}[(i)]
\item {\em finite}
if $p' \sim p$ and $p' \leq p$ implies $p'=p$. 
\item {\em infinite} if there exists a projection $p' \sim p$ 
satisfying $p' \leq p$ and $p' \neq p$. 
\item {\em stably finite}
if $p \otimes 1 \in A \otimes M_n(\C)$ is finite 
for all $n=1,2,\ldots$
\item {\em stably infinite} 
if $p \otimes 1 \in A \otimes M_n(\C)$ is infinite 
for some $n=1,2,\ldots$. 
\end{enumerate}
\end{definition}

\begin{definition}
A \Ca $A$ is said to be 
\begin{enumerate}[(i)]
\item {\em finite} if all projections of $A$ are finite, 
\item {\em infinite} if $A$ contains an infinite projection, 
\item {\em stably finite} if $A \otimes \bK$ is finite, 
\item {\em stably infinite} if $A \otimes \bK$ is infinite, 
\end{enumerate}
where $\bK$ is the \Ca of all compact operators 
on the separable infinite-dimensional Hilbert space. 
\end{definition}

The following is well-known and easy to see.

\begin{lemma}\label{passtocornerciteme}
A projection $p$ in a \Ca $A$ is finite 
(resp.\ infinite, stably finite, stably infinite) 
if and only if the corner $pAp$ is finite 
(resp.\ infinite, stably finite, stably infinite). 
\end{lemma}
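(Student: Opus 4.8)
The plan is to reduce everything to the finite case together with one compression observation, since all four properties come in finite/infinite pairs that are exact logical negations of one another. First I would record the key fact that for subprojections of $p$, Murray--von Neumann equivalence is detected inside the corner: if $q,q'\le p$ are projections then $q\sim q'$ in $A$ if and only if $q\sim q'$ in $pAp$. One direction is trivial since $pAp\subseteq A$. For the converse, given $v\in A$ with $v^*v=q$ and $vv^*=q'$, the element $v$ is a partial isometry, so $v=vv^*v=q'vq$; using $q,q'\le p$ (hence $pq'=q'$ and $qp=q$) one gets $v=pvp\in pAp$, so the equivalence already takes place in the corner.

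Next I would treat the finite case, i.e.\ that $p$ is a finite projection in $A$ if and only if the unital \Ca $pAp$ is finite. Since $p$ is the unit of $pAp$, the compression observation gives at once that $p$ is finite in $A$ exactly when it is finite as a projection of $pAp$; so it remains to see that finiteness of the single projection $p$ forces every subprojection $q\le p$ to be finite. Suppose instead some $q\le p$ is infinite, witnessed by a partial isometry $v$ with $v^*v=q$, $vv^*=q'$, $q'\le q$, $q'\ne q$. Set $w=v+(p-q)$. Using orthogonality of $q$ and $p-q$ together with the relations $v(p-q)=0=(p-q)v$ and their adjoints, one computes $w^*w=q+(p-q)=p$ and $ww^*=q'+(p-q)$. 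The latter is a projection $\le p$ that equals $p$ only if $q'=q$, so $p\sim ww^*<p$, contradicting finiteness of $p$. Hence $p$ finite $\Leftrightarrow pAp$ finite.

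The infinite case is then immediate: a single projection is infinite precisely when it is not finite, and a \Ca is infinite precisely when it is not finite, so negating the finite equivalence yields $p$ infinite $\Leftrightarrow pAp$ infinite. For the stable versions I would apply the finite case to the projection $p\otimes 1_n\in A\otimes M_n(\C)$, whose corner is exactly $(p\otimes 1_n)(A\otimes M_n(\C))(p\otimes 1_n)=pAp\otimes M_n(\C)$; thus $p\otimes 1_n$ is finite iff $pAp\otimes M_n(\C)$ is finite. Combining this with the standard fact that, for the unital algebra $pAp$, the stabilization $pAp\otimes\bK$ is finite if and only if $pAp\otimes M_n(\C)$ is finite for all $n$ (every projection in $B\otimes\bK$ being equivalent to one in some $M_n(B)$, and finiteness being preserved by $\sim$), we obtain $p$ stably finite $\Leftrightarrow pAp$ stably finite. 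Negating once more yields the stably infinite equivalence.

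The only genuine content is the compression argument of the first paragraph and the subprojection construction of the second; everything else is bookkeeping with logical negations and the passage between $\otimes M_n(\C)$ and $\otimes\bK$. The mild point to watch is that $pAp$ is unital, with unit $p$, which is what makes $pAp\otimes M_n(\C)$ literally a corner of $pAp\otimes\bK$ and legitimizes the last reduction.
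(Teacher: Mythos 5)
Your proof is correct. The paper offers no argument for this lemma (it is stated as ``well-known and easy to see''), so there is nothing to diverge from; your write-up is the standard one, with the two genuine ingredients being exactly the ones you isolate: a partial isometry implementing an equivalence of subprojections of $p$ is automatically compressed into $pAp$ (via $v=q'vq=pvp$), and the extension trick $w=v+(p-q)$ showing that an infinite subprojection of $p$ makes $p$ itself infinite. The reductions for the infinite and stable cases by logical negation and by identifying $(p\otimes 1_n)(A\otimes M_n(\C))(p\otimes 1_n)$ with $pAp\otimes M_n(\C)$ are also sound.
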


\begin{definition}\label{pidef}
A projection $p \in A$ is said to be {\em properly infinite} 
if there exist two mutually orthogonal projections $p'$ and $p''$ 
such that $p' \sim p'' \sim p$ and $p' + p'' \leq p$. 
\end{definition}

Is semigroup language, the property translates to 
\[
2[p]\leq [p]
\]
Of course then $n[p]\leq [p]$ for any $n$. We note that for any \Ca $A$, 
$0 \in A$ is the only projection which is finite and properly infinite. 
 Obviously, the sum of two orthogonal properly infinite  projections is
again properly infinite. 
\subsection{Semiprojectivity}

A $C^*$-algebra $A$ is said to be \textbf{semiprojective} if for any given 
map
$\phi:A\to D/\overline{\cup J_k}$, where
\[
J_1\lhd J_2\lhd J_3\lhd\cdots
\]
are nested ideals of $D$, there exists a partial lift $\psi:A\to D/J_{k}$ for some $k$:
\[
\xymatrix{
&D/J_k\ar[d]\\
A\ar_-{\phi}[r]\ar@{..>}@/^/[ur]^-{\psi}&D/\overline{\cup J_k}.}
\]
We call $A$ \textbf{weakly semiprojective w.r.t.} a certain class $\mathcal C$ of $C^*$-algebras when similarly
\[
\xymatrix{
&\prod_{i=1}^\infty B_i\ar[d]\\
A\ar_-{\phi}[r]\ar@{..>}@/^/[ur]^-{\psi}&\prod_{i=1}^\infty B_i\left/ \bigoplus_{i=1}^\infty B_i\right. }
\]
whenever $B_i\in \mathcal C$. When $\mathcal C$ is the class of all $C^*$-algebra, we just say that $A$ is weakly semiprojective.
It is easy to see that a semiprojective $C^*$-algebra is also weakly semiprojective. See \cite{setal:ccsr} for further discussion of this property.

We prepare the following notion: 

\begin{definition}
An inclusion of \CA s $A_0 \subseteq A$ 
is said to have \emph{weak relative lifting property} 
if for every \CA s $B,D$, 
every surjective \shom $\pi\colon D \to B$, 
every \shom $\varphi\colon A \to B$, 
and every \shom $\psi\colon A_0 \to D$ 
such that $\pi \circ \psi = \varphi|_{A_0}$ 
there exists a \shom $\overline{\varphi}\colon A \to D$ 
such that $\pi \circ \overline{\varphi} = \varphi$. 
\end{definition}

We use the term ``weak'' 
since we do not assume that $\overline{\varphi}|_{A_0}=\psi$. Thus in the diagram
\[
\xymatrix{
A_0\ar[r]^-{\psi}\ar@{^{(}->}[d]&D\ar[d]^-{\pi}\\
A\ar@{-->}[ur]_-{\overline{\phi}}\ar[r]_-{\phi}&B,}
\]
the top left triangle does not necessarily commute. The reader is invited to compare with \cite[\S 5.1]{elp:sar}.

The following results follow directly from the definition.

\begin{lemma}\label{weakone}
Let $A_1 \subseteq A_2 \subseteq A_3$ be inclusions of \CA s. 
If both $A_1 \subseteq A_2$ and $A_2 \subseteq A_3$ 
have weak relative lifting property, 
then so does $A_1 \subseteq A_3$. 
\end{lemma}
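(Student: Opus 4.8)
The plan is a straightforward two-step diagram chase in which the output of the first hypothesis serves directly as the input of the second. To set up the lifting problem for $A_1\subseteq A_3$, I would fix \CA s $B,D$, a surjective \shom $\pi\colon D\to B$, a \shom $\varphi\colon A_3\to B$, and a \shom $\psi\colon A_1\to D$ satisfying $\pi\circ\psi=\varphi|_{A_1}$; the goal is to produce a \shom $\overline{\varphi}\colon A_3\to D$ with $\pi\circ\overline{\varphi}=\varphi$.

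First I would apply the weak relative lifting property of $A_1\subseteq A_2$. The ambient surjection $\pi\colon D\to B$ is kept unchanged, and I would feed in the restriction $\varphi|_{A_2}\colon A_2\to B$ together with the given $\psi\colon A_1\to D$. The required compatibility holds, since $\pi\circ\psi=\varphi|_{A_1}=(\varphi|_{A_2})|_{A_1}$. This yields a \shom $\varphi_2\colon A_2\to D$ with $\pi\circ\varphi_2=\varphi|_{A_2}$.

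Then the key observation is that $\varphi_2$ is exactly the partial lift needed to invoke the second hypothesis. I would apply the weak relative lifting property of $A_2\subseteq A_3$ to the same surjection $\pi$, the \shom $\varphi\colon A_3\to B$, and the \shom $\varphi_2\colon A_2\to D$, whose compatibility $\pi\circ\varphi_2=\varphi|_{A_2}$ was just verified. This produces a \shom $\overline{\varphi}\colon A_3\to D$ with $\pi\circ\overline{\varphi}=\varphi$, which is precisely what is wanted.

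I do not expect any genuine obstacle, and in fact the word ``weak'' in the two hypotheses is exactly what makes the argument frictionless. Because neither lifting property demands that the produced map restrict to the given partial lift, there is no need to track any compatibility between $\varphi_2$ and $\psi$ on $A_1$, nor between $\overline{\varphi}$ and $\varphi_2$ on $A_2$. Only commutativity with $\pi$ is propagated through the two steps, and that is precisely the data that chains together; the genuine relative lifting property, by contrast, would force one to reconcile these restrictions and the transitivity would be far less immediate.
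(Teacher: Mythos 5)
Your proposal is correct and is exactly the two-step diagram chase the paper has in mind; the paper omits the argument entirely, stating only that the lemma ``follows directly from the definition.'' Your closing remark about why the word ``weak'' makes the chaining frictionless is precisely the right observation.
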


\begin{lemma}\label{weaktwo}
Let $A_0 \subseteq A$ be inclusions of separable \CA s
having weak relative lifting property. 
If $A_0$ is semiprojective 
then $A$ is semiprojective. 
\end{lemma}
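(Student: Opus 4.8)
The plan is to use Lemma \ref{weakone} to reduce everything to the semiprojectivity that is assumed to be inherited from $A_0$. Concretely, I would take the given setup: separable \CA s $A_0 \subseteq A$ with weak relative lifting property, where $A_0$ is semiprojective, and show $A$ is semiprojective. The natural strategy is to unfold the definition of semiprojectivity for $A$ against a nested family of ideals $J_1 \lhd J_2 \lhd \cdots$ in some $D$, with a map $\phi : A \to D/\overline{\cup J_k}$, and produce a partial lift $\psi : A \to D/J_k$ for some $k$.

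First I would invoke the semiprojectivity of $A_0$ applied to the \emph{restriction} $\phi|_{A_0} : A_0 \to D/\overline{\cup J_k}$. Since $A_0$ is semiprojective, there is some $k$ and a partial lift $\psi_0 : A_0 \to D/J_k$ with $q_k \circ \psi_0 = \phi|_{A_0}$, where $q_k : D/J_k \to D/\overline{\cup J_k}$ is the natural quotient map. Second I would set $B = D/\overline{\cup J_k}$, $D' = D/J_k$, $\pi = q_k$, $\varphi = \phi$, and $\psi = \psi_0$, and check that the hypotheses of the weak relative lifting property for $A_0 \subseteq A$ are met: indeed $\pi$ is surjective, $\varphi = \phi$ maps $A \to B$, $\psi = \psi_0$ maps $A_0 \to D'$, and the compatibility $\pi \circ \psi = \varphi|_{A_0}$ holds by the choice of $\psi_0$. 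The weak relative lifting property then furnishes $\overline{\varphi} : A \to D/J_k$ with $\pi \circ \overline{\varphi} = \phi$, which is exactly the desired partial lift $\psi = \overline{\varphi}$.

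The one genuine subtlety is the interplay between the two different formulations of lifting: semiprojectivity is phrased in terms of a single $D$ with an increasing chain of ideals and the limiting quotient $D/\overline{\cup J_k}$, whereas the weak relative lifting property is phrased for an arbitrary surjection $\pi : D' \to B$. The bridge is simply to observe that $q_k : D/J_k \to D/\overline{\cup J_k}$ is a surjective \shom, so the weak relative lifting property applies verbatim to this $q_k$. Once this identification is made, the argument is a direct diagram chase and there is no quantitative estimate or approximation to control. I would therefore expect no real obstacle: the content is entirely in lining up the universal quantifiers in the weak relative lifting property with the data produced by the semiprojectivity of $A_0$, and the separability hypothesis is what guarantees that semiprojectivity may be tested against sequential chains of ideals in the manner used here.

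I should note that the role of Lemma \ref{weakone} in the surrounding development is to let one build up the inclusion $A_0 \subseteq A$ out of intermediate pieces each enjoying the weak relative lifting property; for the present lemma, however, only the single inclusion $A_0 \subseteq A$ is needed, so I would not invoke Lemma \ref{weakone} directly in this proof but rather keep the argument to the two-step reduction described above.
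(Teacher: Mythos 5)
Your argument is correct and is precisely the ``direct from the definition'' proof the paper intends (the paper offers no written proof, stating only that the lemma follows directly from the definitions): lift $\phi|_{A_0}$ using semiprojectivity of $A_0$, then apply the weak relative lifting property to the surjection $D/J_k \to D/\overline{\cup J_k}$. The only quibble is that separability plays no real role in the diagram chase as you present it, since the paper's definition of semiprojectivity is already phrased for sequential chains of ideals.
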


\section{Model projections}

 A key tool in our analysis stems from the computation of the nonstable $K$-theory of graph algebras and related objects, developed by Ara, Moreno and Pardo in \cite{pamamep:nkga}, and we are grateful to Pere Ara for having educated us on the matter. Succintly put, this work allows us to regard any projection $p\in C^*(E,\mys)\otimes \bK$,  up to Murray-von Neumann equivalence, as being on the form
\[
\bigoplus_{i=1}^n \pv{v_i}{X_i}
\]
where $v_i\in E^0$ and $X_i\subseteq E^1v_i$ is finite, and where
\[
\pv{v}{X}=p_v-\sum_{e\in X}s_es_e^*
\]
The results of \cite{pamamep:nkga} are stated in the generality of row-finite graph $C^*$-algebras, but passing to the generality of general and possibly relative graph algebras is possible by essentially the same proof as in \cite{pamamep:nkga}. This is  explained in \cite{dhmlmmer:nklpa} for graph algebras, so we just need to extend the observations to relative graph algebras.

\subsection{Identifying semigroups}

\begin{definition}
For any graph $E=(E^0,E^1,s,r)$ and $\mys\subseteq E^0\reg$, we let $W(E,\mys)$ be the universal semigroup (i.e., abelian monoid) generated by
\[
\{\g{v,X}\mid v\in E^0, X\subseteq vE^1, X\text{ finite}\}
\]
subject to the relations
\begin{enumerate}[(i)]
\item $\g{v,X}+\g{s(e),\emptyset}=\g{v,X\setminus \{e\}}$ for any $v,X,e$ with $e\in X\subseteq vE^1$
\item $\g{v,vE^1}=0$ for any $v\in \mys$
\end{enumerate}
\end{definition}

We  abbreviate $\g{v,\emptyset}=\g{v}$, and think of (i) as the equivalent 
\begin{equation}\label{twoprime}
\g{v,X\cup\{e\}}+\g{s(e)}=\g{v,X}
\end{equation}
which is valid whenever $e\not\in X$, $X\subseteq vE^1$, $e\in vE^1$.

\begin{lemma}
There exists a homomorphism $\Phi:W(E,\mys)\to V(C^*(E,\mys))$ given by
\[
\Phi(\g{v,X})=\left[p_v-\sum_{e\in X}s_es_e^*\right]
\]
\end{lemma}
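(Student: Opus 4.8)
The plan is to invoke the universal property of $W(E,\mys)$. Since this monoid is \emph{defined} as the abelian monoid on the generators $\g{v,X}$ subject only to relations (i) and (ii), any map sending each generator to an element of a target monoid extends (uniquely) to a monoid homomorphism provided the chosen images satisfy the two defining relations. So I would put
\[
\Phi(\g{v,X})=\left[\pv{v}{X}\right],\qquad \pv{v}{X}=p_v-\sum_{e\in X}s_es_e^*,
\]
and then the whole lemma reduces to two checks: that each $[\pv{v}{X}]$ is a bona fide element of $V(C^*(E,\mys))$, and that these classes respect relations (i) and (ii).

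First I would record that $\pv{v}{X}$ really is a projection. The range projections $s_es_e^*$ for distinct edges are mutually orthogonal (a standard property of relative graph algebras, already used implicitly in \CK{3} and in the remark preceding Theorem \ref{giut}), and for $e\in X\subseteq vE^1$ each satisfies $s_es_e^*\le p_{r(e)}=p_v$ by \CK{2}. Hence $\sum_{e\in X}s_es_e^*$ is a subprojection of $p_v$ and $\pv{v}{X}$ is its complement inside $p_v$, giving a genuine class in $V(C^*(E,\mys))$ via the corner embedding of $C^*(E,\mys)$ into $C^*(E,\mys)\otimes\bK$.

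Next I would verify relation (i), namely $\g{v,X}+\g{s(e)}=\g{v,X\setminus\{e\}}$ for $e\in X\subseteq vE^1$. Reinstating the $e$-th summand gives the \emph{orthogonal} decomposition $\pv{v}{X\setminus\{e\}}=\pv{v}{X}+s_es_e^*$, the two terms being orthogonal because $s_es_e^*\le\sum_{f\in X}s_fs_f^*$ while $\pv{v}{X}$ is the complement of that sum in $p_v$. Therefore $[\pv{v}{X\setminus\{e\}}]=[\pv{v}{X}]+[s_es_e^*]$ in $V(C^*(E,\mys))$. Since $s_e$ is a partial isometry with $s_e^*s_e=p_{s(e)}$ and range projection $s_es_e^*$, we have $s_es_e^*\sim p_{s(e)}$, so $[s_es_e^*]=[p_{s(e)}]=\Phi(\g{s(e)})$; combining, $[\pv{v}{X\setminus\{e\}}]=[\pv{v}{X}]+[p_{s(e)}]$, which is exactly the image of (i). For relation (ii), $\g{v,vE^1}=0$ with $v\in\mys$: here $v$ is regular so $vE^1$ is finite, and \CK{3} gives $\pv{v}{vE^1}=p_v-\sum_{e\in vE^1}s_es_e^*=0$, whence $\Phi(\g{v,vE^1})=[0]=0$.

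With both relations verified, the universal property yields the asserted (unique) homomorphism $\Phi$. There is no genuinely hard step; the only point requiring care is the bookkeeping in relation (i) — ensuring that $\pv{v}{X}$ and the reinstated range projection $s_es_e^*$ are orthogonal, so that classes add in $V(C^*(E,\mys))$, and then realizing $s_es_e^*\sim p_{s(e)}$ through the partial isometry $s_e$ itself.
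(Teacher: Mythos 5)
Your proposal is correct and follows essentially the same route as the paper: check that $\pv{v}{X}$ is a projection via \CK{2}, verify relation (i) by the orthogonal decomposition together with $s_es_e^*\sim s_e^*s_e=p_{s(e)}$ from \CK{1}, verify relation (ii) from \CK{3}, and conclude by the universal property of $W(E,\mys)$. No substantive differences.
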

\begin{proof}
The element considered in $C^*(E,\mys)$ is really a projection because of \CK{2}, and we have that 
\begin{eqnarray*}
\left[p_v-\sum_{f\in X\backslash \{e\}}s_fs_f^*\right]+[p_{s(e)}]&=&
\left[p_v-\sum_{f\in X}s_fs_f^*-s_es_e^*\right]+[s_e^*s_e]\\
&=&\left[p_v-\sum_{f\in X}s_fs_f^*\right]
\end{eqnarray*}
by \CK{1} and the definition of Murray-von Neumann equivalence, and
\[
\left[p_v-\sum_{e\in vE^1}s_es_e^*\right]=[0]
\]
by \CK{3}, when $v\in \mys$. Then $\Phi$ exists by the universality of $W(E,\mys)$.
\end{proof}

We have

\begin{theorem}\label{amp}
$\Phi$ is a semigroup isomorphism.
\end{theorem}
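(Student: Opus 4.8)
The map $\Phi$ is already well defined by the preceding lemma, so the theorem splits into surjectivity and injectivity, of which only the latter carries real weight. For surjectivity I would simply invoke the structural fact recalled just above the definition of $W(E,\mys)$: every projection in $C^*(E,\mys)\otimes\bK$ is Murray--von Neumann equivalent to a finite orthogonal sum $\bigoplus_{i=1}^n \pv{v_i}{X_i}$. Hence for any $[p]\in V(C^*(E,\mys))$ we have $[p]=\sum_i[\pv{v_i}{X_i}]=\Phi\big(\sum_i\g{v_i,X_i}\big)$, so $\Phi$ is onto, and the entire problem reduces to showing that no relations hold among the classes $[\pv{v}{X}]$ beyond those forced by (i) and (ii).

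For injectivity the plan is to reduce to the non-relative graph algebra case, where the corresponding statement is exactly the computation of nonstable $K$-theory of \cite{pamamep:nkga}, extended beyond the row-finite hypothesis in \cite{dhmlmmer:nklpa}. Concretely, I would use the isomorphism $C^*(E,\mys)\cong C^*(\makestandard{E}{\mys})$ from Section~\ref{SecRGA}, which gives $V(C^*(E,\mys))\cong V(C^*(\makestandard{E}{\mys}))$. Since $\makestandard{E}{\mys}$ is an honest graph (all of \CK{3} is imposed), the cited results identify $V(C^*(\makestandard{E}{\mys}))$ with the graph monoid $M_{\makestandard{E}{\mys}}=W(\makestandard{E}{\mys},(\makestandard{E}{\mys})^0\reg)$ via the analogous map. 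It then suffices to produce a semigroup isomorphism $\Psi\colon W(E,\mys)\to M_{\makestandard{E}{\mys}}$ intertwining $\Phi$ with this canonical identification, for then injectivity of $\Phi$ follows from injectivity in the standardised graph.

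The construction of $\Psi$ is the combinatorial core. On generators I would send $\g{v,\emptyset}\mapsto a_v$ for each $v\in E^0$, while for a regular $v\notin\mys$ the twin vertex $v'$ of $\makestandard{E}{\mys}$ is designed precisely so that the released relation at $v$ reads $a_v=\sum_{e\in vE^1}a_{s(e)}+a_{v'}$; thus $a_{v'}$ carries the ``defect'' $\g{v,vE^1}$, and more generally $\g{v,X}\mapsto a_{v'}+\sum_{e\in vE^1\setminus X}a_{s(e)}$, an honest positive combination of generators. One then checks that relations (i) and (ii) of $W(E,\mys)$ correspond bijectively to the Cuntz--Krieger relations of $M_{\makestandard{E}{\mys}}$ at the vertices of $\makestandard{E}{\mys}$ (with (ii), for $v\in\mys$, matching \CK{3} at $v$ and, for $v\notin\mys$, matching it at $v$ in the enlarged graph), so that $\Psi$ is a well-defined isomorphism whose composite with the standard identification is $\Phi$. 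This verification is routine bookkeeping, but must be carried out carefully since the added vertices and edges of $\makestandard{E}{\mys}$ are exactly what encode the finite truncations $X$.

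The main obstacle is the imported injectivity statement $V(C^*(F))\cong M_F$ for a general graph $F$, that is, the ``no extra collapse'' direction of the Ara--Moreno--Pardo theorem: two formal sums of vertex projections that are unequal in the graph monoid must remain inequivalent in the $C^*$-algebra. Because \cite{pamamep:nkga} is stated only in the row-finite case, I would need to confirm that the underlying confluence/refinement-monoid argument survives the passage to arbitrary graphs, as recorded in \cite{dhmlmmer:nklpa}. Once that is in hand, the genuinely new content of the relative setting is isolated entirely in the combinatorial isomorphism $\Psi$, which certifies that the extra generators $\g{v,X}$ coming from regular vertices outside $\mys$ introduce no relations beyond (i) and (ii); alternatively, one could bypass the reduction and run the same confluence argument directly on $W(E,\mys)$, but this merely reproves what $\Psi$ already transports from the graph case.
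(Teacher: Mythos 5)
Your proposal follows essentially the same route as the paper: pass to the standardized graph $\makestandard{E}{\mys}$, match $W(E,\mys)$ with $W(\makestandard{E}{\mys})$ by an explicit combinatorial isomorphism making the square with the two $\Phi$'s commute, and import the non-relative statement from \cite{pamamep:nkga} and \cite{dhmlmmer:nklpa}; the paper's proof is exactly this commutative square, stated without the bookkeeping you spell out. Two minor caveats that do not affect the strategy: your separate surjectivity argument quotes a structural fact that the paper derives \emph{from} this theorem (so it is subsumed by, not independent of, the reduction), and your displayed relation $a_v=\sum_{e\in vE^1}a_{s(e)}+a_{v'}$ reflects the ``add a source'' standardization rather than the edge-duplicating $\makestandard{E}{\mys}$ of Section \ref{SecRGA} (under which $\g{v,\emptyset}$ should map to $a_v+a_{v'}$, not $a_v$), so the routine verification must be adapted to whichever construction is actually used.
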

\begin{proof}
Our identification of relative graph algebras  as graph algebras give us the vertical semigroup isomorphisms in 
\[
\xymatrix{
{W(E,\mys)}\ar[r]^-{\Phi}&{V(C^*(E,\mys))}\\
{W(\makestandard{E}{\mys})}\ar[r]^-{\Phi}\ar[u]&{V(C^*(\makestandard{E}{\mys}))}\ar[u]}
\]
We then appeal to \cite{dhmlmmer:nklpa}.
\end{proof}

\subsection{Model projections}

We proved in the previous section that 
when $p\in C^*(E,\mys)\otimes \bK$ is given, we have $p\sim \bigoplus_{i=1}^n \pv{v_i}{X_{v_i}}$ for a suitable choice of $v_1,\dots, v_n$ and $X_{v_1},\dots, X_{v_n}$.
We aim to determine when such projections
are infinite,  properly infinite, and when the corners they define are semiprojective $C^*$-algebras. For these purposes it is very convenient when there are no multiplicities among the elements $v_i$, since then we may set
\[
X=\bigcup_{i=1}^n X_{v_i}
\]
without loss of information and consider
\[
\pVX=\sum_{v\in V}\pv{v}{X_v}\in C^*(E,\mys).
\]
instead of the direct sum, these being the same up to Murray-von Neumann equivalence. Of course, not every projection in $C^*(E,\mys)\otimes\bK$ has a representation like that, but as we shall see, for our purposes we may still reduce to this case.

\begin{lemma}\label{stdform}
For any projection $p\in C^*(E,\mys)\otimes \bK$ there exists a finite set $V\subseteq E^0$ and finite sets $\{X_v\}_{v\in V}$ with $X_v\subseteq vE^1$ 
and
\begin{equation}\label{nobull}
X_v=vE^1\text{ only when }v\not \in \mys
\end{equation}
so that with $\pVX$ defined as above, we have $[\pVX]\leq [p]\leq n[\pVX]$ for some $n$.
\end{lemma}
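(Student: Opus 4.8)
The plan is to do all the work inside the monoid $W(E,\mys)$ and transport the conclusion through the isomorphism $\Phi$ of Theorem \ref{amp}. By the discussion opening this subsection, the class of $p$ in $V(C^*(E,\mys))$ equals $\Phi\big(\sum_{i=1}^n\g{v_i,X_i}\big)$ for suitable vertices $v_1,\dots,v_n$ and finite sets $X_i\subseteq v_iE^1$; discarding any summand that is already $0$, I may assume every $\g{v_i,X_i}\neq 0$ (if all vanish then $p\sim 0$ and $V=\emptyset$ works). Since $\Phi$ is a semigroup isomorphism it preserves the algebraic order, so it suffices to produce $V$ and $\{X_v\}$ with $\sum_{v\in V}\g{v,X_v}\le\sum_{i=1}^n\g{v_i,X_i}\le n\sum_{v\in V}\g{v,X_v}$ in $W(E,\mys)$. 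I would take $V=\{v_1,\dots,v_n\}$ (the distinct vertices occurring) and, for each $v\in V$, set $X_v=\bigcap_{i:\,v_i=v}X_i$. The normalisation \eqref{nobull} is then automatic: if $X_v=vE^1$, then every $X_i$ with $v_i=v$ equals $vE^1$, and for $v\in\mys$ relation (ii) would force those (nonzero) summands to vanish, a contradiction; hence $X_v=vE^1$ only at $v\notin\mys$.

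The upper bound is the easy half. For each $i$, writing $v=v_i$, I have $X_v\subseteq X_i$, and since enlarging the removed set only decreases the generator (read off from \eqref{twoprime}) this gives $\g{v_i,X_i}\le\g{v,X_v}\le\sum_{w\in V}\g{w,X_w}$. Summing the $n$ terms yields $\sum_i\g{v_i,X_i}\le n\sum_{v\in V}\g{v,X_v}$, which is exactly the required upper bound, with $n$ the number of (surviving) summands.

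The lower bound is the crux. Because the index sets $\{i:v_i=v\}$ partition $\{1,\dots,n\}$, it is enough to prove, for each fixed $v$ with associated sets $X_{i_1},\dots,X_{i_k}$, the single-vertex inequality $\g{v,\bigcap_j X_{i_j}}\le\sum_j\g{v,X_{i_j}}$, which I would obtain by induction from the two-set case $\g{v,A\cap B}\le\g{v,A}+\g{v,B}$. This two-set inequality is where the real content lies, and it is precisely where the leftover source generators created by intersecting must be reabsorbed. Iterating \eqref{twoprime} to strip edges gives the identity $\g{v,A\cap B}=\g{v,A}+\sum_{e\in A\setminus B}\g{s(e)}$; applying the same identity to the inclusion $B\subseteq A\cup B$ gives $\g{v,B}=\g{v,A\cup B}+\sum_{e\in A\setminus B}\g{s(e)}$, whence $\sum_{e\in A\setminus B}\g{s(e)}\le\g{v,B}$. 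Combining these two relations yields $\g{v,A\cap B}\le\g{v,A}+\g{v,B}$; the induction then delivers the single-vertex inequality, and summing over $v\in V$ gives $\sum_{v\in V}\g{v,X_v}\le\sum_i\g{v_i,X_i}$. Transporting both inequalities back through $\Phi$ proves $[\pVX]\le[p]\le n[\pVX]$. I expect the only genuine obstacle to be the two-set inequality above: once one notices that the defect term $\sum_{e\in A\setminus B}\g{s(e)}$ is itself a summand of $\g{v,B}$, everything else is routine bookkeeping.
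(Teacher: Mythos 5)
Your proof is correct and follows essentially the same route as the paper: both invoke Theorem \ref{amp} to write $[p]$ as a finite sum of generators and then merge the summands sitting over a common vertex by keeping the \emph{intersection} of the removed edge sets. Your two identities $\g{v,A\cap B}=\g{v,A}+\sum_{e\in A\setminus B}\g{s(e)}$ and $\g{v,B}=\g{v,A\cup B}+\sum_{e\in A\setminus B}\g{s(e)}$ add up to exactly the paper's redistribution relation $[\pv{v}{A}]+[\pv{v}{B}]=[\pv{v}{A\cap B}]+[\pv{v}{A\cup B}]$, and your observation that the defect term $\sum_{e\in A\setminus B}\g{s(e)}$ is a summand of $\g{v,B}$ is a cleaner (and strictly correct) substitute for the paper's appeal to $\pv{v}{A\cap B}\in I(\pv{v}{A\cup Y})$, a claim which as literally written can fail, e.g.\ when $A\cup B=vE^1$ with $v\in\mys$; your monoid-level bookkeeping, together with the trivial bound $\g{v,A}\le\g{v,A\cap B}$ for the other direction, sidesteps this entirely.
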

\begin{proof}
  Using Theorem \ref{amp}, we see that $p\sim \bigoplus_{i=1}^n \pv{v_i}{X_i}$ for a suitable choice of $v_1,\dots v_n$ and $X_1,\dots X_n$. We then note that 
\[
p_{v,X}\oplus p_{v,Y}\sim p_{v,X\cap Y}\oplus p_{v,X\cup Y}
\]
by redistributing, to the first summand,  the terms $s_es_e^*$ which occur in both terms. Note that we have $p_{v,X\cup Y}\leq p_{v,X\cap Y}$ and $p_{v,X\cap Y}\in I(p_{v,X\cup Y})$. Thus, since for any projection $q$ we get
\[
[p_{v,X\cup Y}\oplus q]\leq[p_{v,X\cap Y}\oplus p_{v,X\cup Y}\oplus q]\leq [p_{v,X\cap Y}]+m[\oplus p_{v,X\cap Y}]+ [q]\leq (m+1)[p_{v,X\cup Y}\oplus q]
\]
the proof can be completed
by
reordering and applying this argument inductively.

If $v\in \mys$ and $X_v=vE_1$ we have $\pv{v}{X_v}=0$ by \CK{3}, so we may simply drop such a summand.
\end{proof}

We will denote such projections \textbf{model projections} and the pair $(V,\XV)$ the \textbf{model}. Whenever such a model is given, it is understood that $X\subseteq VE^1$, and we use the notation
\[
X_v=X\cap vE^1
\]
whenever convenient.

In general, a corner $\pVX C^*(E,\mys)\pVX$ does not have an obvious  representation as a graph algebra, but we will now devise a procedure of replacing a model $(V,X_v)$ by another model $(W,Y_w)$ -- for a different projection, but retaining all pertinent information for our purposes --  so that we get $\pWY C^*(E,\mys)\pWY$ that does. Envisioning potential future applications to the case of non-unital graph $C^*$-algebras, we allow in the present section $V$ and $W$ to be infinite, whereas all sets $X_v$ and $Y_w$ must be finite. In this case, the projections $\pVX$ and $\pWY$ are to be considered as elements of the multiplier algebra  $M(C^*(E,\mys))$. For all purposes of the paper at hand, except Example \ref{bbwasagainnotentirelyright}, the reader may wish to assert that $V$ is finite and $\pVX$ is in $C^*(E,\mys)$.

\begin{proposition}\label{cornerisgraph}
Let $(V,\XV)$ be a model and set
\[
H=\{v\in V\mid X_v=\emptyset\}.
\]
When
\begin{enumerate}[(i)]
\item $H$ is hereditary 
\item $X_v=vE^1(E^0\setminus H)$ for all $v\in V\setminus H$
\end{enumerate}
then for the graph $(F^0,F^1,s,r)$ defined by
\[
F^0=V\qquad F^1=HE^1\sqcup (V\setminus H)E^1H
\]
and $s,r$ chosen as restrictions from $E$, we have
\[
\pVX C^*(E,\mys)\pVX\simeq C^*(F,\mys\cap H).
\]
\end{proposition}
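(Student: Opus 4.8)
The plan is to realize $C^*(F,\mys\cap H)$ concretely inside the corner by exhibiting generators, obtaining a $*$-homomorphism $\Psi\colon C^*(F,\mys\cap H)\to \pVX C^*(E,\mys)\pVX$ from the universal property of the relative graph algebra, and then showing that $\Psi$ is both surjective and injective; injectivity will come from the gauge invariant uniqueness theorem (Theorem~\ref{giut}).

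For the generators I would take $q_v=\pv{v}{X_v}$ for $v\in F^0=V$ and $t_e=s_e$ for $e\in F^1$. First one checks they lie in the corner: $\pVX q_v\pVX=q_v$ is clear since $q_v\le\pVX$, and $\pVX s_e\pVX=s_e$ holds because every $e\in F^1$ has $s(e)\in H$ (so $s_e q_{s(e)}=s_e$) while on the range side $q_{r(e)}s_e=s_e$, using that hypothesis~(ii) guarantees $e\notin X_{r(e)}$ whenever $r(e)\in V\setminus H$. Next I would verify \CK{1}--\CK{3} for $F$ relative to $\mys\cap H$. Relations \CK{1} and \CK{2} are inherited, again via $s(e)\in H$ and $e\notin X_{r(e)}$; for \CK{3} at $v\in\mys\cap H$ heredity forces $vF^1=vE^1$, so the relation collapses to the instance of \CK{3} valid at $v\in\mys$ in $C^*(E,\mys)$. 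The universal property then produces $\Psi$.

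The core of the argument is surjectivity, which I would deduce from a path-tracking observation. It suffices to place each $\pVX s_\alpha s_\beta^*\pVX=(\pVX s_\alpha)(\pVX s_\beta)^*$ in the range of $\Psi$, hence to analyse $\pVX s_\alpha$. This vanishes unless $r(\alpha)\in V$; and when $r(\alpha)\in V\setminus H$, the factor $q_{r(\alpha)}=\pv{r(\alpha)}{X_{r(\alpha)}}$ kills $s_\alpha$ unless the first edge of $\alpha$ sources in $H$, by hypothesis~(ii). Once a path has reached $H$, heredity (hypothesis~(i)) forces every subsequent edge to range, and hence source, inside $H$; so a non-vanishing $\pVX s_\alpha$ forces $\alpha$ to be a path in $F$, whereupon $\pVX s_\alpha=s_\alpha$ is a product of the generators $t_e$. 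As $\Psi$ has closed range, this gives surjectivity.

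The step I expect to be the main obstacle is injectivity, precisely because it forces one to pin down the relative set so that it records \emph{exactly} the vertices at which a Cuntz--Krieger sum survives in the corner. After noting that $\pVX$ is gauge invariant, so that the corner inherits a circle action making $\Psi$ equivariant, I would apply Theorem~\ref{giut}. Condition~(ii), the non-vanishing of $q_v$, follows from the normalization~\eqref{nobull} and the remark preceding Theorem~\ref{giut}. The delicate condition is~(iii): for $v\in F^0\reg$ and $Y\subseteq vF^1$ the relevant $\Psi$-image is $q_v-\sum_{e\in Y}s_es_e^*=\pv{v}{X_v\cup Y}$, the disjointness of $X_v$ and $Y$ coming from hypothesis~(ii), and one must show it vanishes only for $v\in\mys\cap H$ with $Y=vF^1$. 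Here care is genuinely needed: a regular vertex of $V\setminus H$ that lies in $\mys$ produces a surviving relation of type \CK{3} in the corner which is not imposed at $\mys\cap H$ alone, so confirming that the relative set of $F$ captures precisely these collapses---and absorbing or excluding any such vertex---is the crux on which the isomorphism depends. For infinite $V$ the same scheme applies, with $\pVX$ read in $M(C^*(E,\mys))$ and the spanning and lifting carried out strictly.
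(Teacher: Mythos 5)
Your proposal follows the paper's proof essentially step for step: the same generators $q_v=\pv{v}{X_v}$, $t_e=s_e$, the same verification that they lie in the corner and satisfy \CK{1}--\CK{3} relative to $\mys\cap H$, the same path-splitting argument for surjectivity (a nonzero $\pVX s_\alpha$ forces the edge of $\alpha$ adjacent to $r(\alpha)$ to lie in $(V\setminus H)E^1H$ and, by heredity, the remainder of $\alpha$ to stay inside $H$), and Theorem \ref{giut} for injectivity. Everything you actually carry out is correct and matches the paper.

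The one step you leave open --- condition (iii) of Theorem \ref{giut} at a vertex $v\in\mys\cap(V\setminus H)$ --- is, however, a genuine gap, and you are right to call it the crux. The paper's own proof disposes of this case with ``if $v\in\mys\setminus H$ there is nothing to prove,'' but that does not verify the hypothesis of Theorem \ref{giut}. For such a $v$, hypothesis (ii) combined with \CK{3} at $v$ in $C^*(E,\mys)$ yields
\[
q_v=\sum_{e\in vE^1}s_es_e^*-\sum_{e\in X_v}s_es_e^*=\sum_{e\in vF^1}t_et_e^*,
\]
so the corner satisfies a Cuntz--Krieger relation at $v$ which is \emph{not} imposed in $C^*(F,\mys\cap H)$, and the nonzero projection $\pv{v}{vF^1}\in C^*(F,\mys\cap H)$ is killed by the map. (By \eqref{nobull} such a $v$ has $\emptyset\neq vF^1=vE^1H$ finite, so it is genuinely a regular vertex of $F$ at which condition (iii) must be checked.) A minimal instance: $E^0=\{a,b,c\}$ with one edge in $bE^1a$ and one in $bE^1c$, $\mys=\{b\}$, $V=\{a,b\}$, $X_a=\emptyset$, $X_b=bE^1c$; the hypotheses (i) and (ii) hold with $H=\{a\}$, the corner is $M_2(\C)$, yet $C^*(F,\mys\cap H)=\toep{F}\cong M_2(\C)\oplus\C$. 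So the task you set yourself --- ``show it vanishes only for $v\in\mys\cap H$ with $Y=vF^1$'' --- cannot be completed; the relation also collapses at every regular $\mys$-vertex of $V\setminus H$, and the relative set of the target algebra must be enlarged to $\mys\cap V$ (equivalently, such vertices must be absorbed into the relative set or excluded by hypothesis). With that adjustment your injectivity check closes exactly as you set it up, and the rest of your argument is unaffected.
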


The idea of the definition is that $X_v=\emptyset$ precisely at some hereditary set $H$, and that at other vertices in $V$, $X_v$ contains exactly those edges that start outside of $H$. Note that $\mys$ only plays a indirect role here -- the places where \CK{3} are imposed are simply passed on.

\begin{proof}
Let us abbreviate $P=\pVX$.We provide a concrete isomorphism $\phi:C^*(F,\mys\cap H)\to P C^*(E,\mys)P$ given by
\[
\phi(p_v)=q_v\qquad \phi(s_e)=t_e
\]
where
\begin{eqnarray*}
q_v&=&p_v-\sum_{e\in X_v} s_es_e^*=\pv{v}{X_v},\qquad v\in F^0=V\\
t_e&=&s_e, \qquad e\in F^1=HE^1\sqcup (V\setminus H)E^1H
\end{eqnarray*}
To see that $\phi$ is well-defined we must check that $Pq_vP=q_v$, $Pt_eP=t_e$, and that the axioms \CK{1}--\CK{3} hold true. For $v\in V$, we obviously have $\pv{v}{X_v}\leq P$$e\in F^1$, we have $s_e^*s_e=p_{s(e)}=q_{s(e)}$ since $s(e)\in H$ as $H$ is hereditary. Also, we get that
\[
s_es_e^*\leq q_{r(e)}
\]
This is clear for $e\in HE^1$, and  for $e\in (V\setminus H)E^1H$ follows by noting that we excluded all edges there from $X_v$. Thus the initial projection of $t_e$ is dominated by $P$, and this is the only non-trivial point in checking that \CK{1} and \CK{2} holds, and that $\phi$ maps to the corner $PC^*(E,\mys)P$. To check \CK{3}, suppose $v\in \mys\cap H$ is given, then
\[
\sum_{e\in vF^1}t_et_e^*=\sum_{e\in vE^1}s_es_e^*=p_v=q_v
\]
since $H$ is hereditary.

The map is injective by gauge invariance, cf.\ Theorem \ref{giut}. Indeed, $q_v\not =0$ for all $v\in H$ since $q_v=p_v\not=0$,  and for the remaining vertices since we arranged that $X_v=vE^1$ only for $v\not\in \mys$.  
Suppose $v$ and $X\subseteq vF^1$ is given with
\[
q_v-\sum_{e\in X}t_et_e^*=p_v-\sum_{e\in X_v\sqcup X}s_es_e^*=0.
\]
By universality in $C^*(E,\mys)$ we get that $v\in \mys$ and $X_v\sqcup X=vE^1$. If $v\in \mys\setminus H$ there is nothing to prove, and if $v\in\mys\cap H$ we have $X_v=\emptyset$, whence $X=vE^1=vF^1$ as desired, using that $H$ is hereditary. 

Finally,  $\phi$ is surjective by the following considerations. Note that for any path $\xi$ in $E$ with $|\xi|\geq 1$, we have
\[
P s_\xi=\begin{cases} s_\xi&r(\xi)\in H\text{ or } \xi_1\in (V\setminus H)E^1H\\0&\text{otherwise}
\end{cases}
\]
whereas for $v\in E$, we get
\[
P=\begin{cases} q_v&v\in H\\0&\text{otherwise}.
\end{cases}
\]
Hence we have
\begin{eqnarray*}
PC^*(E,\mys)P
&=&P\overline{\operatorname{span}}(s_\xi s_\eta^*\mid \xi,\eta\in E^*,|\xi|,|\eta|\geq 0\})P\\
&=&\overline{\operatorname{span}}(P s_\xi (P s_\eta)^* \mid |\xi|,|\eta|\geq 0\})\\
&=&\overline{\operatorname{span}}\left(\{s_\xi s_\eta^*\mid \xi,\eta\in F^*,|\xi|+|\eta|\geq 1\}\cup\{q_v\mid v\in F^0\}\right)
\end{eqnarray*}
\end{proof}

\begin{proposition}\label{passto}
For any model $(W,\YW)$ there exists a model $(V,\XV)$ such that 
\begin{enumerate}[(i)]
\item The pair $(V,\XV)$ satisfies the conditions of Proposition \ref{cornerisgraph}
\item $[\pWY]\leq [\pVX]\leq n[\pWY]$ for some $n$.
\end{enumerate}
Here $V$ is given as $W\cup H$, where $H=\bigcup_{n=0}^\infty H_n$ with
\begin{gather*}
H_0=\{w\in W\mid Y_w=\emptyset\}\cup\bigcup_{\{w\in W\mid Y_w\not=\emptyset\}} s(wE^1\backslash Y_w)\\
H_{n+1}=H_n\cup\{w\in W\mid Y_w\subseteq H_{n}E^1\}\cup\{v\in E^0\mid H_nE^1v\not=\emptyset\}
\end{gather*}
and
 the $X_v$ are given by
\[
X_v=\begin{cases}
\emptyset &v\in H\\
vE^1(E^0\setminus H)& v\in W\backslash H
\end{cases}
\]
\end{proposition}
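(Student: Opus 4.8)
The plan is to carry out every comparison inside the refined monoid $W(E,\mys)$, which by Theorem \ref{amp} is isomorphic to $V(C^*(E,\mys))$, so that $[\pWY]$ and $[\pVX]$ become $\sum_{w}\g{w,Y_w}$ and $\sum_{v}\g{v,X_v}$ and all estimates reduce to manipulations with \eqref{twoprime} together with the comparison $[p_{s(e)}]=[s_e^*s_e]=[s_es_e^*]\le[p_{r(e)}]$ coming from \CK{1}--\CK{2}. I would first dispose of (i): check that $(V,\XV)$ is a genuine model, that the set $H$ of the construction coincides with $\{v\in V\mid X_v=\emptyset\}$, and that conditions (i)--(ii) of Proposition \ref{cornerisgraph} hold, so that (i) is then immediate. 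The two enlargement steps producing $H_{n+1}$ are to be read as: adjoin a vertex $w\in W$ once all of its cut edges $Y_w$ have source in $H_n$, and adjoin every vertex that emits an edge into $H_n$; the latter makes $H$ closed under sources of incoming edges, hence hereditary ($HE^1\subseteq E^1H$), while the former is the saturation step.

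For the model check the decisive observation is already built into $H_0$: for $w\in W$ with $Y_w\neq\emptyset$ the sources $s(wE^1\setminus Y_w)$ of the \emph{kept} edges all lie in $H_0\subseteq H$, so every edge into $w$ whose source is outside $H$ must have been cut. Hence for $w\in W\setminus H$ one gets $X_w=wE^1(E^0\setminus H)\subseteq Y_w$, which in particular shows that each $X_v$ is finite. Condition \eqref{nobull} then follows: if $X_v=vE^1$ for some $v\in W\setminus H$ then $Y_v=vE^1$ as well, forcing $v\notin\mys$ by \eqref{nobull} for $(W,\YW)$, while for $v\in H$ one has $X_v=\emptyset=vE^1$ only at a source, which is singular. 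The inclusion $H\subseteq\{v\in V\mid X_v=\emptyset\}$ is clear; conversely if $v\in W\setminus H$ had $X_v=\emptyset$ then every edge into $v$, and hence all of $Y_v$, would have source in $H$, and the saturation step would already have placed $v$ in $H$.

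The inequality $[\pWY]\le[\pVX]$ is the easy half: for those $w\in W$ lying in $H$ we have $X_w=\emptyset$ and $\pv{w}{Y_w}\le p_w=\pv{w}{X_w}$, while for $w\in W\setminus H$ the inclusion $X_w\subseteq Y_w$ gives $\pv{w}{Y_w}\le\pv{w}{X_w}$; summing over $w\in W\subseteq V$ and adding the contributions of the vertices in $H\setminus W$ yields the claim.

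The upper bound $[\pVX]\le n[\pWY]$ is the crux. Using \eqref{twoprime} to re-insert the cut edges lying in $Y_w\setminus X_w$, all of which have source in $H$ by the $H_0$-observation, I would write $\g{w,X_w}=\g{w,Y_w}+\sum_{e\in Y_w\setminus X_w}\g{s(e)}$, so that $[\pVX]$ splits as $\sum_{w\in W\setminus H}\g{w,Y_w}\le[\pWY]$ plus a sum of terms $\g{u}$ with $u\in H$. The whole problem thus reduces to the key estimate that each generator $\g{u}$ with $u\in H$ satisfies $\g{u}\le m_u[\pWY]$ for a finite $m_u$, which I would prove by induction on the least $n$ with $u\in H_n$. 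For $u\in H_0$ this is direct: either $u=w$ with $Y_w=\emptyset$, so $\g{u}=\g{w,Y_w}$, or $u=s(e)$ for a kept edge $e$ into some $w$, whence $s_es_e^*\le\pv{w}{Y_w}$ gives $\g{u}=[s_es_e^*]\le\g{w,Y_w}$. For the inductive step, a vertex adjoined because it emits an edge $e$ into $H_n$ satisfies $\g{u}=\g{s(e)}\le\g{r(e)}$ with $r(e)\in H_n$ already controlled, and a vertex $w$ absorbed by saturation satisfies $\g{w}=\g{w,Y_w}+\sum_{e\in Y_w}\g{s(e)}$ with every $s(e)\in H_n$, again controlled. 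The main obstacle is precisely this induction, and the reason it closes is that the predecessor-closure adjoins only vertices \emph{dominated} by ones already present, so the estimates never reverse the direction of the inequality. When $V$ (hence $H$) is finite --- the case relevant to all of the paper save Example \ref{bbwasagainnotentirelyright} --- summing the finitely many estimates produces a single $n$; in the general case the same bounds are assembled in $M(C^*(E,\mys))$, which is where $\pVX$ and $\pWY$ then live.
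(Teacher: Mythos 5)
Your proof is correct and follows essentially the same route as the paper's: the same two key facts ($X_w=wE^1(E^0\setminus H)\subseteq Y_w$ for $w\in W\setminus H$, and control of the vertex classes over $H$ by induction on the stage $H_n$ at which a vertex enters). The only cosmetic difference is that you phrase the upper bound as explicit inequalities $\g{u}\leq m_u[\pWY]$ in the monoid $W(E,\mys)$, whereas the paper records the same induction as the statement $p_v\in\ideal{\pWY}$ and then invokes the standard subequivalence consequence.
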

\begin{proof}
Because of the last subset in the definition of $H_{n+1}$, we clearly get that $H$ is hereditary. 
We will check
\begin{enumerate}
\item $Y_w\setminus E^1H=wE^1(E^0\setminus H)\not=\emptyset$ for any $w\in W\setminus H$
\item $\pv{v}{X_v}\in\ideal{\pWY}$ for any $v\in V$
\end{enumerate}
which then entail the claims; indeed by (1) $(X,\XV)$ will be a model because $X_v$ is finite for any $v$, being either empty or contained in $Y_v$, and the extra requirements of Proposition \ref{cornerisgraph} are met by construction. Further, we have $\pWY\leq \pVX$ by (1) again, and $[\pVX]\leq n[\pWY]$ for some $n$ by (2).

To prove (1), we first note that for any $w\in W\backslash H$ we have $Y_w\not=\emptyset$ by definition of $H_0$, and $wE^1(E^0\setminus H)\not=\emptyset$ because of the middle subset in the recursive definition of $H_{n+1}$. We obviously have the inclusion from left to right, so assume that $e\in wE^1(E^0\setminus H)$. To see that $e\in Y_w$ we note that if 	$e\not\in Y_w$, we would have $s(e)\in H$, which would be a contradiction. To prove (2), we set $I=\ideal{\pWY}$ and first prove by induction that when $v\in H_n$, we have $p_v\in I$. When $n=0$, this is clear for $v\in V$ with $X_v=\emptyset$, and for $v\in s(wE^1\backslash Y_w)$ with $Y_w\not=\emptyset$ we have for a suitable $e\in wE^1\setminus Y_w$ that
\[
p_v=s^*_es_e\sim s_es_e^*\leq p_w-\sum_{f\in Y_w}s_fs_f^*\in I
\]
For any $n$, assume that the claim is proved for $H_n$ and fix $v\in H_{n+1}$. When $v\in H_n$ or when $H_nE^1v\not=\emptyset$, we easily get that $p_v\in I$, and when $v$ was added to $H_n$ because $Y_v\subseteq H_nE^1$ we have
\[
p_v=\left(p_v-\sum_{f\in Y_v}s_fs_f^*\right)+\sum_{f\in Y_v}s_fs_f^*
\]
where the first summand is dominated by $\pWY$ by construction, and each summand in the last term lies in $I$ by the induction hypothesis, since for each such $f$ we have $s(f)\in H_n$.
\end{proof}

When $E^0$ is finite there is an obvious algorithm computing this model, running until $H_n=H_{n+1}$. We denote this algorithm \textsc{HModel}, so in the theorem above we have $(V,\XV)=$\textsc{HModel}$(W,\YW)$.
\section{Dichotomies of infinity}
In the present  section we will prove the  two dichotomies stated in Theorem \ref{dicho} regarding projections in relative graph algebras or their stabilizations: either a projection is infinite, or any surjective image of it is stably finite, and either a projection is properly infinite, or some surjective image of it is nonzero and stably finite.
Our strategy will be to prove these results by reducing the first claim to the case when $p$ is a model projection, and then applying a version of the dichotomy in \cite{ekmr:npic}, somewhat amended to this setting, to derive the second. We obtain this by giving a complete algorithmic description of when a model projection $\pVX$ is infinite, but postpone to Section \ref{SecPI} the much more difficult question of devising an algorithm to decide which model projections are  properly infinite.

\subsection{Finite corners}

We now describe a straightforward algorithm to check whether or not a model projection is infinite.  To
run it, and several other algorithms provided below, we assert that the
vertex set $E^0$ is partitioned into three sets 
\[
E^{0}_0,
E^{0}_1,
E^{0}_2
\]
 given by the properties that there is no cycle based on $v\in
E^0_0$, a unique cycle based on $v\in E^0_1$, and more than one cycle
based on $v\in E^0_2$. When $E^0$ is finite, there are $O(|E^0|^3)$
algorithms for determining this, related to Warshall's algorithm (cf. \cite{kbr}), but in general it is of course not possible to decide how to partition an infinite $E^0$ into such sets. All our algorithms given below could be amended to compute only partial information (those vertices in a finite set supporting cycles of bounded length) and terminate as indicated, but we will not pursue this matter here.

We now set 
\[
\infset=E^0_2\cup \{v\in E^0_1 \mid v\not\in\mys\text{ or }|vE^1|>1\},
\]
noting
\begin{lemma}\label{infibasics} Consider a vertex projection $p_v\in C^*(E,\mys)$.
\begin{enumerate}[(i)]
\item When $v\in E^0_2$, $p_v$ is properly infinite. \item When $v\in \infset$, $p_v$ is infinite. \item When $\infset=\emptyset$, then $C^*(E,\mys)$ is an $AT$ algebra.
\end{enumerate}
\end{lemma}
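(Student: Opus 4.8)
The three parts call for increasingly global arguments, and I would treat them in order. For (i), I would exploit the fact that a cycle records a subprojection of $p_v$ equivalent to it: if $\mu$ is a path with $s(\mu)=r(\mu)=v$ and $|\mu|\geq 1$, then $s_\mu^*s_\mu=p_v$ and $s_\mu s_\mu^*\leq p_{r(\mu)}=p_v$, so $s_\mu s_\mu^*\sim p_v$. The defining feature of $v\in E^0_2$ is that there are two distinct simple cycles $\mu,\nu$ based at $v$. Two distinct simple cycles cannot have one a prefix of the other, since a prefix ending at $v$ would force the longer cycle to revisit $v$ in its interior; hence $s_\mu^*s_\nu=0$, so $s_\mu s_\mu^*$ and $s_\nu s_\nu^*$ are orthogonal. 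As each is equivalent to $p_v$ and their sum lies under $p_v$, Definition \ref{pidef} gives that $p_v$ is properly infinite.

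For (ii), the case $v\in E^0_2$ is immediate from (i), since a nonzero properly infinite projection is infinite and $p_v\neq 0$. For $v\in E^0_1$ with $v\notin\mys$ or $|vE^1|>1$, let $c$ be the unique simple cycle at $v$ and $e$ its first edge. Then $s_cs_c^*\leq s_es_e^*\leq p_v$ with $s_cs_c^*\sim p_v$, so it suffices to show $s_cs_c^*\neq p_v$, for which it is enough that $p_v-s_es_e^*\neq 0$. But $p_v-s_es_e^*=\pv{v}{\{e\}}$ is exactly one of the projections declared nonzero in the discussion preceding Theorem \ref{giut}, precisely because either $v\notin\mys$ or $\{e\}\subsetneq vE^1$ (the latter being the condition $|vE^1|>1$). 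Hence $p_v$ is infinite.

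The content lies in (iii). First I would translate the hypothesis $\infset=\emptyset$: it asserts $E^0_2=\emptyset$ and that every $v\in E^0_1$ lies in $\mys$ with $|vE^1|=1$. Since any vertex lying on a cycle belongs to $E^0_1\cup E^0_2=E^0_1$, this says every cycle vertex is a regular vertex in $\mys$ receiving a single edge; thus no cycle has an entry and distinct cycles are vertex-disjoint. I would then filter $C^*(E,\mys)$ by finite subgraphs: by Lemma \ref{subgraph} each finite subgraph $F$ yields an embedded copy of $C^*(F,\mys\cap\EF)$, these form a directed system under inclusion, and since every generator lies in some finite $F$ the union is dense, so $C^*(E,\mys)$ is the inductive limit of the $C^*(F,\mys\cap\EF)$.

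Finally I would note that the no-entry property passes to every finite subgraph $F$ (a cycle of $F$ is a cycle of $E$, and any entry in $F$ is an entry in $E$), and that each cycle vertex of $F$ remains regular and in $\mys\cap\EF$, its unique incoming edge being the cycle edge, which lies in $F$. Thus $C^*(F,\mys\cap\EF)$ is the algebra of a finite graph with no-entry cycles, which is a circle algebra, i.e.\ a finite direct sum of matrix algebras over $\C$ and over $C(\T)$; an inductive limit of circle algebras is an $AT$ algebra. I expect the main obstacle to be this last point, that a finite graph with no-entry cycles has a circle-algebra $C^*$-algebra. I would either cite the known structure of such finite graph algebras or argue directly, decomposing the finite graph into its vertex-disjoint cycles with the acyclic trees hanging off them, each cycle of length $k$ contributing a summand Morita equivalent to $M_k(C(\T))$ and each acyclic part a matrix summand over $\C$.
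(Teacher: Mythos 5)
Your proof is correct and takes essentially the same route as the paper: orthogonal range projections of two distinct first-return cycles for (i), nonvanishing of $\pv{v}{\{e\}}$ for the cycle edge $e\in vE^1$ for (ii), and for (iii) the reduction to finite subgraphs via Lemma \ref{subgraph} together with the key check that cycle vertices of $F$ land in $\mys\cap \EF$, which is exactly the point the paper's proof singles out. The only cosmetic remark is that the edge $e$ you use in (ii) should be identified as the edge of the cycle lying in $vE^1$ (the one with range $v$), as your formulas indeed require.
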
 
\begin{proof}
When $\xi_1,\xi_2$ are two different cycles based at  $v$ we have
\[
p_v\sim s_{\xi_1}^*s_{\xi_1}\sim s_{\xi_2}^*s_{\xi_2}
\]
with the two latter projections orthogonal and dominated by $p_v$.
When there is a unique cycle $\xi$ based at $v$, we have in all cases that $p_v\sim s_{\xi}^*s_{\xi}$ and $p_v\geq  s_{\xi}s_{\xi}^*$, and equality in the latter case only when $\xi$ has no entry and all vertices visited by $\xi$ are in $\mys$. Thus we get infinite projections in the remaining cases. Finally, when $|E^0|<\infty$ and $\infset =\emptyset$ we  see that $C^*(E,\mys)$ is a direct sum of algebras of the form $M_n(\C)$ or $M_n(C(\T))$. In the general case we write $E$ as an increasing union of finite subgraphs and note that because of Lemma \ref{subgraph} it suffices to prove that $\infset=\emptyset$ implies that $F^0_{\infty,(F\cap \mys)\cup \EF}=\emptyset$ for any such subgraph $F$.  The only concern is that we could have that a $v\in F^0_1$ had $v\in \mys$ but $v\in \EF$, but in this case we would have to have $v\in E^0_1$ and then see that $v$ receives no edge from outside $F$.
\end{proof}

\begin{theorem}\label{Thm:notI}
Let $E=(E^0,E^1,s,r)$ be a graph, 
and $\mys$ be a subset of $E^0\reg$. 
For a model projection $p_{V,\XV}$,
the following conditions are equivalent. 
\begin{enumerate}[(i)]

\item $\pVX$ is finite in $C^*(E,\mys)$. 
\item for any gauge-invariant ideal $I$ of $C^*(E,\mys)$ 
the image of $\pVX$ in the quotient $C^*(E,\mys)/I$ 
is stably finite. 
\item for any ideal $I$ of $C^*(E,\mys)$ 
the image of $\pVX$ in the quotient $C^*(E,\mys)/I$ 
is 
stably finite. \item $\left[\{v\in V\mid X_v=\emptyset\}\cup\bigcup_{\{v\in V\mid X_v\not=\emptyset\}}s(vE^1\backslash X_v)\right]E^*\infset=\emptyset$

\end{enumerate}
\end{theorem}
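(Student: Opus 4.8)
The plan is to prove the cyclic chain $(iv)\Rightarrow(iii)\Rightarrow(ii)\Rightarrow(i)\Rightarrow(iv)$. Two links are immediate: $(iii)\Rightarrow(ii)$ holds because gauge-invariant ideals are in particular ideals, and $(ii)\Rightarrow(i)$ follows by taking $I=0$, since a stably finite projection is finite. Thus the real content is in $(i)\Rightarrow(iv)$ and $(iv)\Rightarrow(iii)$. Throughout I will use the elementary fact, immediate from Lemma~\ref{passtocornerciteme}, that if $q$ is infinite and $q\precsim p$ then $p$ is infinite (a copy of $q$ sits below $p$, so $pAp$ contains the infinite projection $q$), together with its analogue that a projection dominated up to Murray--von Neumann equivalence by a finite (resp.\ stably finite) projection is itself finite (resp.\ stably finite).

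For $(i)\Rightarrow(iv)$ I argue contrapositively. Suppose $SE^*\infset\neq\emptyset$, so some $w\in\infset$ emits a path to a vertex $v$ of the set $S=\{v\in V\mid X_v=\emptyset\}\cup\bigcup_{X_v\neq\emptyset}s(vE^1\setminus X_v)$. By Lemma~\ref{infibasics} the projection $p_w$ is infinite, and the path yields $p_w\precsim p_v$, so $p_v$ is infinite. I then check $p_v\precsim\pVX$: if $v$ lies in $S$ because $X_v=\emptyset$ then $p_v\leq\pVX$ directly, while if $v=s(e)$ for some $e\in uE^1\setminus X_u$ then $p_v=s_e^*s_e\sim s_es_e^*\leq p_{u,X_u}\leq\pVX$. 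Hence $\pVX$ dominates up to equivalence an infinite projection, so it is infinite, contradicting $(i)$.

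For $(iv)\Rightarrow(iii)$ the idea is to pass to a corner that is a genuine graph algebra. Applying \textsc{HModel} (Proposition~\ref{passto}) to $(V,\XV)$ produces a model $(V',X')$ satisfying the hypotheses of Proposition~\ref{cornerisgraph}, with $[\pVX]\leq[p_{V',X'}]\leq n[\pVX]$ and a hereditary set $H=\bigcup_n H_n$ whose zeroth stage $H_0$ is exactly the set $S$ of $(iv)$; moreover the corner $p_{V',X'}C^*(E,\mys)p_{V',X'}$ is isomorphic to $C^*(F,\mys\cap H)$. A direct check shows that for $u\in H$ the cycles based at $u$ and the in-degree $|uE^1|$ are unchanged on passing to $F$, so that $F^0_{\infty,\mys\cap H}=\infset\cap H$. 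The combinatorial heart is the equivalence $SE^*\infset=\emptyset\iff\infset\cap H=\emptyset$: every ancestor of $S$ lies in $H$ by the predecessor clause of the recursion, giving one direction, and for the converse one shows that each $u\in\infset\cap H$ reaches $S$. Granting this, $(iv)$ forces $F^0_{\infty,\mys\cap H}=\emptyset$, whence $C^*(F,\mys\cap H)$ is an $AT$ algebra by Lemma~\ref{infibasics}; every quotient of an $AT$ algebra is stably finite, since writing it as an inductive limit of circle algebras exhibits each quotient as an inductive limit of subhomogeneous algebras, and stable finiteness passes to inductive limits. For an arbitrary ideal $I$ the corner of the image of $p_{V',X'}$ is such a quotient, hence stably finite, and as the image of $\pVX$ is dominated up to equivalence by it, that image is stably finite too. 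This is $(iii)$.

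The main obstacle is the converse half of this equivalence, that every $u\in\infset\cap H$ reaches $S$. The difficulty is that $H$ is strictly larger than the set of ancestors of $S$: the saturation clause of \textsc{HModel} may adjoin vertices with no visible forward path to $S$. To overcome this I plan to consider the set $C$ of descendants of $u$ and to pick $y\in C\cap H$ of minimal \textsc{HModel}-stage. If that stage were positive, then $y$ was adjoined either by the predecessor clause, which forces an edge from $y$ into a strictly earlier stage and hence a vertex of $C\cap H$ of smaller stage, or by the saturation clause, in which case \emph{every} edge into $y$ (removed or not) originates in a strictly earlier stage; the cycle guaranteed by $u\in\infset$ (when $y=u$) or the incoming edge along a path from $u$ (when $y\neq u$) then again supplies a vertex of $C\cap H$ of smaller stage. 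Either way minimality is contradicted, so the minimal stage is $0$ and $u$ reaches $H_0=S$. Verifying that both clauses genuinely force a descent in stage, and that the cycle provides the required incoming edge exactly in the otherwise-stuck case $y=u$, is the delicate point on which the whole dichotomy rests.
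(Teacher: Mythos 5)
Your proof is correct and follows the paper's architecture almost exactly: the two trivial implications, the contrapositive of (i)$\Rightarrow$(iv) via Lemma~\ref{infibasics} and the subequivalences $p_w\precsim p_v\precsim \pVX$, and the reduction of (iv)$\Rightarrow$(iii) through \textsc{HModel} (Proposition~\ref{passto}) and Proposition~\ref{cornerisgraph} to an $AT$ corner all coincide with what the paper does. The only point of divergence is the combinatorial verification that $H$ misses $\infset$. The paper proves by induction on the stages that, assuming (iv), no path from $\infset$ ends at a vertex of $H_n$ (the base case $H_0=S$ being exactly condition (iv)); you instead fix a hypothetical $u\in\infset\cap H$ and run a minimal-stage descent over its descendants to exhibit a path from $u$ into $H_0=S$, which (iv) then forbids. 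Both arguments turn on the same two observations -- the predecessor clause supplies an outgoing edge into a strictly earlier stage, while the saturation clause forces \emph{all} incoming edges (both those in $Y_y$ and those whose sources were already placed in $H_0$) to originate in an earlier stage -- and your handling of the otherwise-stuck case $y=u$ via the cycle guaranteed by $u\in\infset$ is exactly right. The net effect is that you establish $\infset\cap H\subseteq s(SE^*)$ where the paper establishes $HE^*\infset=\emptyset$; both deliver the only fact that is used, namely $\infset\cap H=\emptyset$, so your equivalence $SE^*\infset=\emptyset\iff\infset\cap H=\emptyset$ is a harmless strengthening. Two cosmetic remarks: the claim that stable finiteness passes to inductive limits should be qualified to limits with injective connecting maps (which is what the increasing union of images in a quotient of an $AT$ algebra provides), and for $F^0_{\infty,\mys\cap H}=\infset\cap H$ one should also note that vertices of $F^0\setminus H$ emit no edges of $F$ and hence carry no cycles; neither affects the validity of the argument.
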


\begin{proof}
Obviously, (iii)$\Longrightarrow$(ii)$\Longrightarrow$(i), and we have seen in Lemma \ref{infibasics} that all $p_v$ with $v\in \infset$  are infinite, so since (iv) implies that $\pVX$ dominates a projection equivalent to one of these vertex projections, we also have that (i)$\Longrightarrow$(iv).  For the remaining implication, note that we can pass from $\pVX$ to $\pWY$ with the latter satisfying the condition of Proposition \ref{cornerisgraph} since by Lemma \ref{passto}(ii), if $\pWY$ is stably finite, then so is $\pVX$. We have that $\pWY$ defines  a corner isomorphic to $C^*(F,\mys_F)$ which will have $\F^0_{\infty,\mys_F}=\emptyset$ if only $H\cap \infset=\emptyset$ with $H$ as in the proof of Proposition \ref{passto}. It is clear that when $v\in H_0$ or when $v\in H_{n+1}\setminus H_n$ is added because $H_nE^1v\not=\emptyset$, then $v\not\in \infset$ by our assumption in (iv), so consider the case when $v\in H_{n+1}\setminus H_n$ is added because $X_v\in E^1H_n$. In that case we have $vE^1\subseteq H_n$, and since we may assume by induction that no path from $\infset$ ends in a vertex of $H_n$, the same can be said of $v$. 

We conclude that $C^*(F,\mys_F)$ is an  AT algebra as seen in Lemma \ref{infibasics}(iii), and hence has the property that every surjective image of it is stably finite, proving (iv)$\Longrightarrow$(iii). 
\end{proof}

\begin{proposition}\label{dichoI}
Let $p\in C^*(E,\mys)$. Either $p$ is infinite, or for any surjection $\pi:C^*(E,\mys)\to A$, $\pi(p)$ is stably finite.
\end{proposition}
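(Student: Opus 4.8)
The plan is to reduce to the already-settled case of model projections and then push the conclusion through the semigroup inequalities of Lemma \ref{stdform}. Given $p\in C^*(E,\mys)$, Lemma \ref{stdform} furnishes a model projection $\pVX$ and an integer $n$ with $[\pVX]\leq [p]\leq n[\pVX]$ in $V(C^*(E,\mys))$. Theorem \ref{Thm:notI} supplies a clean dichotomy for $\pVX$ itself: either $\pVX$ is finite in $C^*(E,\mys)$, or it is infinite. I would prove the proposition by showing that these two alternatives transfer respectively to the two alternatives in the statement: if $\pVX$ is infinite then $p$ is infinite, and if $\pVX$ is finite then every surjective image of $p$ is stably finite. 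Since ``finite'' and ``infinite'' are exact negations, these cases are mutually exclusive and jointly exhaustive, so the dichotomy will follow.

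For the first alternative I would argue that infiniteness passes upward along $\precsim$. From $[\pVX]\leq[p]$ one obtains a projection $q_0\leq p$ with $q_0\sim\pVX$; as $q_0$ and $\pVX$ are equivalent to $\pVX$, which is infinite, $q_0$ is infinite as well. A standard computation — writing $p=q_0+(p-q_0)$, replacing $q_0$ by a proper equivalent subprojection while keeping the identity on $p-q_0$ — then shows $p$ itself is infinite, disposing of the first disjunct.

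For the second alternative, suppose $\pVX$ is finite. Then the implication (i)$\Rightarrow$(iii) of Theorem \ref{Thm:notI} tells us that the image of $\pVX$ in $C^*(E,\mys)/I$ is stably finite for every ideal $I$. Given a surjection $\pi\colon C^*(E,\mys)\to A$ with kernel $I$, applying the induced monoid map to $[p]\leq n[\pVX]$ yields $[\pi(p)]\leq n[\pi(\pVX)]$; since a direct sum of stably finite projections is stably finite and any projection subequivalent to a stably finite projection is again stably finite, $\pi(p)$ is stably finite, giving the second disjunct.

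The genuinely routine steps — that \shoM s preserve $\precsim$, that subprojections and subequivalents of (stably) finite projections are (stably) finite, and that a direct sum of stably finite projections is stably finite — I would relegate to standard facts. The one point demanding real care, and the step I expect to be the main obstacle, is the bookkeeping between $C^*(E,\mys)$ and its stabilization: the inequalities coming from $V(C^*(E,\mys))$ are statements about projections in $C^*(E,\mys)\otimes\bK$, whereas ``infinite'' in the conclusion is a property internal to $C^*(E,\mys)$. I would therefore verify explicitly that a Murray--von Neumann equivalence in $C^*(E,\mys)\otimes\bK$ between two projections of $C^*(E,\mys)$ (viewed in the $(1,1)$-corner) is already implemented inside $C^*(E,\mys)$, so that the realization $q_0\leq p$ with $q_0\sim\pVX$ and the ensuing infiniteness take place in the algebra rather than merely in its stabilization.
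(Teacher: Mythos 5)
Your proposal is correct and follows essentially the same route as the paper's proof: both pass to a model projection $\pVX$ with $[\pVX]\leq[p]\leq n[\pVX]$ via Lemma \ref{stdform}, use the equivalence of finiteness of $\pVX$ with stable finiteness of all its quotient images from Theorem \ref{Thm:notI}, and transfer the conclusion back to $p$ through the two semigroup inequalities. The paper phrases the case split as ``if $p$ is finite then so is $\pVX$'' rather than splitting on $\pVX$, but this is just the contrapositive of your first alternative.
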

\begin{proof}
Given $p$, we choose a model $\pVX$ with 
\[
[\pVX]\leq [p]\leq n[\pVX]
\]
If $[p]$ is finite, then so is $[\pVX]$, and by Theorem \ref{Thm:notI}(ii) we have that $m[\pVX+I]$ is finite for all $m$ and $I\idealof C^*(E,\mys)$. Thus
$m[p+I]$
is finite for any $m$, being dominated by $mn[\pVX+I]$.
\end{proof}

It is not true in general that any infinite projection in $A/I$ which lifts to a projection also lifts to one which is infinite (an easy  counterexample is found in $C\mathcal O_2^\sim\to \mathcal O_2$). Further, by \cite{mr:scfip}, it is also not true in general that any sum of finite projections is finite. But in our case, such results may be inferred from our results above:

\begin{corollary}
Let $I\gidealof C^*(E,\mys)$ be given. When $p'\in C^*(E,\mys)/I$ is an infinite projection, there exists an projection $p\in C^*(E,\mys)$ such that $p'=p+I$, and any such lift is infinite.
\end{corollary}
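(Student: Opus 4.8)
The plan is to treat the two assertions in turn --- that an infinite $p'$ admits \emph{some} projection lift, and that \emph{every} projection lift is infinite. The second is an immediate consequence of the dichotomy already in hand, so I would dispatch it first. Suppose $p\in C^*(E,\mys)$ is any projection with $p+I=p'$, and apply Proposition \ref{dichoI} to $p$ together with the quotient map $\pi\colon C^*(E,\mys)\to C^*(E,\mys)/I$. If $p$ were not infinite, then $\pi(p)=p'$ would be stably finite, and in particular finite (the case $n=1$ in the definition), contradicting that $p'$ is infinite. Hence every lift of $p'$ is infinite; this is exactly the point where the pathology of the generic situation (e.g.\ $C\mathcal O_2^\sim\to\mathcal O_2$) is ruled out, and it uses nothing beyond \ref{dichoI}.

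For existence I would first pin down the quotient. As $I$ is gauge invariant it is given by an \nap $(H,\myr)$ with $\mys\subseteq\myr$, and $C^*(E,\mys)/I\simeq C^*(E\setminus H,\myr\setminus H)$ is again a relative graph algebra, with $\pi$ sending each generator $p_v$ $(v\notin H)$ and $s_e$ $(e\in(E\setminus H)^1)$ to the like-named generator and all other generators to $0$. Under the isomorphisms $\Phi$ of Theorem \ref{amp} the induced map $V(C^*(E,\mys))\to V(C^*(E,\mys)/I)$ becomes the semigroup homomorphism $W(E,\mys)\to W(E\setminus H,\myr\setminus H)$ carrying $\g{v,X}$ to $\g{v,X\cap(E\setminus H)^1}$ when $v\notin H$ and to $0$ when $v\in H$; this is onto the generators, hence onto. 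Thus the class $[p']$ is realised by a projection from $C^*(E,\mys)$ after stabilisation: choosing a model $\bigoplus_i\pv{v_i}{X_i}$ for $[p']$ over $E\setminus H$, the like-named direct sum $Q=\bigoplus_i\pv{v_i}{X_i}$ formed over $E$ is a projection in some $M_m(C^*(E,\mys))$ with $\pi(Q)\sim p'$.

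The remaining step, which I expect to be the main obstacle, is to promote this class-level lift to an honest projection lift inside the \emph{unstabilised} algebra $C^*(E,\mys)$. Concretely, I would pick a partial isometry $w'$ in $M_m(C^*(E,\mys)/I)$ with $(w')^*w'=\pi(Q)$ and $w'(w')^*=p'$ (viewed in the $(1,1)$-corner), and invoke the standard partial-isometry lifting principle: since its initial projection $\pi(Q)$ lifts to the projection $Q$, the partial isometry $w'$ lifts to some $w\in M_m(C^*(E,\mys))$ with $w^*w=Q$, so that $P:=ww^*$ is a projection in $M_m(C^*(E,\mys))$ with $\pi(P)=p'$. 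The genuine difficulty is that $P$ a priori lives in $M_m(C^*(E,\mys))$ rather than in the corner $C^*(E,\mys)=e_{11}M_m(C^*(E,\mys))e_{11}$, even though its image $p'$ already sits in that corner; this is precisely the gap between the class lifting and the projection lifting.

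Here I would use that $p'$ is infinite. Infiniteness is what provides the room to compare $P$ with $e_{11}$ inside $M_m(C^*(E,\mys))$ and to arrange $P\precsim e_{11}$, which transports an equivalent copy $P_0\le e_{11}$ of $P$ into $C^*(E,\mys)$ with $\pi(P_0)\sim p'$; a final application of partial-isometry lifting --- now with the \emph{liftable} initial projection $P_0\in C^*(E,\mys)$ --- then yields an honest projection $p\in C^*(E,\mys)$ with $p+I=p'$. I would expect the careful justification of the comparison $P\precsim e_{11}$ (i.e.\ that the inflation forced by passing through $M_m$ can be reabsorbed precisely because $p'$ is infinite, rather than merely that its class lifts) to be the delicate heart of the argument. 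With such a $p$ produced, the second paragraph already shows that $p$, and every other projection lifting $p'$, is infinite, as claimed.
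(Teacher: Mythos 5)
Your treatment of the second assertion --- that \emph{every} projection lift of $p'$ is infinite --- is exactly the paper's proof: a one-line application of Proposition \ref{dichoI} (a finite $p$ would force $\pi(p)=p'$ to be stably finite, contradicting infiniteness of $p'$). That part is correct and complete.

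The existence half is where the problems lie. The paper offers no argument for it at all (its proof consists solely of the sentence handling the second assertion), so you are on your own there, and your sketch has two genuine gaps. First, the ``standard partial-isometry lifting principle'' you invoke --- that a partial isometry $w'$ lifts to some $w$ with $w^*w=Q$ as soon as its source projection $\pi(Q)=(w')^*w'$ lifts to the projection $Q$ --- is not valid in that generality. A contractive lift $a$ of $w'$ with $aQ=a$ only gives $a^*a\leq Q$ with defect $Q-a^*a\in I$ positive; turning $a$ into an honest partial isometry with source exactly $Q$ requires room to absorb that defect. This is precisely why the paper's Proposition \ref{XX} assumes $P$ \emph{properly infinite} before correcting the lifts $T_i$ via $U_i=T_i+U_1V_i\sqrt{P-T_i^*T_i}$; without such a hypothesis the step fails. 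Second, the comparison $P\precsim e_{11}$, which you yourself flag as the ``delicate heart,'' does not follow from the infiniteness of $p'$: infinite is weaker than properly infinite, and an infinite projection need not absorb anything, so knowing $\pi(P)=p'\leq\pi(e_{11})$ gives you a subequivalence only in the quotient, and lifting a subequivalence is exactly the same kind of problem you started with. As it stands the existence argument is circular at its crucial point. The efficient route, and presumably what the authors regard as standard, is entirely different: since $I$ is gauge-invariant it is itself a graph $C^*$-algebra (Theorem \ref{idealasgraph}) and hence has an approximate unit consisting of projections; for such an ideal \emph{any} projection in the quotient lifts, by the classical functional-calculus argument (lift $p'$ to a positive contraction $a$, choose a projection $e\in I$ with $\|(1-e)(a-a^2)(1-e)\|<\tfrac14$, and take $p=\chi_{[1/2,1]}\bigl((1-e)a(1-e)\bigr)$). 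This uses neither Theorem \ref{amp} nor any infiniteness of $p'$, and closes the existence claim in full.
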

\begin{proof} 
We know every image of an finite projection is finite by Proposition \ref{dichoI}, so such a lift must be infinite.
\end{proof}

\begin{corollary}\label{Inicesum} Let $p,q\in C^*(E,\mys)\otimes\bK$ be given. When $p\oplus q$ is infinite, then so is either $p$ or $q$.
\end{corollary}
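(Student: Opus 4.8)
The plan is to prove the contrapositive: assuming that neither $p$ nor $q$ is infinite, i.e. that both are finite, I will show that $p\oplus q$ is finite. Since $p\oplus q$ lives in $C^*(E,\mys)\otimes\bK$, I first invoke Lemma \ref{stdform} to replace $p$ and $q$ by model projections $\pVX$ and $\pWY$ with $[\pVX]\leq[p]\leq n_1[\pVX]$ and $[\pWY]\leq[q]\leq n_2[\pWY]$. As $\pVX\precsim p$ and a subprojection of a finite projection is finite, $\pVX$ is finite, and likewise $\pWY$; thus by Theorem \ref{Thm:notI} both satisfy the combinatorial criterion (iv). Writing $S_{V,X}$ for the set appearing in square brackets in that criterion (and similarly $S_{W,Y}$), this says $S_{V,X}E^*\infset=\emptyset=S_{W,Y}E^*\infset$.

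The decisive feature of criterion (iv) is that it is governed entirely by forward reachability from the defining set, and reachability from a union is the union of the reachabilities. So I next form the direct sum $\pVX\oplus\pWY$ inside $C^*(E,\mys)\otimes\bK$ and apply the reduction of Lemma \ref{stdform} once more to obtain a genuine (single-vertex) model $\pv{U}{Z}$ with $[\pVX\oplus\pWY]\leq m[\pv{U}{Z}]$. Because the reduction replaces repeated occurrences of a vertex $v$ (carrying sets $X_v$ and $Y_v$) by the single set $X_v\cup Y_v$, the omitted edges $vE^1\setminus(X_v\cup Y_v)$ form a subset of both $vE^1\setminus X_v$ and $vE^1\setminus Y_v$; tracking this through every vertex gives the containment $S_{U,Z}\subseteq S_{V,X}\cup S_{W,Y}$. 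Hence $S_{U,Z}E^*\infset\subseteq (S_{V,X}\cup S_{W,Y})E^*\infset=\emptyset$, so $\pv{U}{Z}$ again satisfies (iv) and is therefore finite. By the equivalence (i)$\Leftrightarrow$(iii) of Theorem \ref{Thm:notI}, every quotient image of $\pv{U}{Z}$ is stably finite, so in particular $r[\pv{U}{Z}]$ is finite for all $r$. Finally, chaining the bounds gives $[p\oplus q]\leq \max(n_1,n_2)\,[\pVX\oplus\pWY]\leq \max(n_1,n_2)\,m\,[\pv{U}{Z}]$, a subprojection of a finite projection, whence $p\oplus q$ is finite.

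I expect the main obstacle to be the bookkeeping in the reduction step: verifying that passing from the two-summand model $\pVX\oplus\pWY$ to a single-vertex model $\pv{U}{Z}$ can only shrink the defining set, so that the containment $S_{U,Z}\subseteq S_{V,X}\cup S_{W,Y}$ holds. This requires a careful case analysis at each vertex according to whether $X_v$, $Y_v$ are empty or not (for instance, when $X_v=\emptyset$ but $Y_v\neq\emptyset$ the contribution of $v$ to the reduced set is $s(vE^1\setminus Y_v)$, which lies in $S_{W,Y}$ rather than in $S_{V,X}$). An alternative that avoids the reduction altogether would be to extend Theorem \ref{Thm:notI} so that criterion (iv) applies verbatim to a general model projection $\bigoplus_i\pv{v_i}{X_i}$ with the defining set taken as the union over the summands; with that extension in hand the statement is immediate, since the defining set of $p\oplus q$ is then exactly $S_{V,X}\cup S_{W,Y}$.
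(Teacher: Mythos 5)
Your overall strategy (pass to the contrapositive, reduce to model projections, merge, apply criterion (iv) of Theorem \ref{Thm:notI}) runs parallel to the paper's, but the merge rule you use at shared vertices is wrong, and the error is fatal rather than cosmetic. When two summands meet at a vertex $v$, the equivalence $\pv{v}{X_v}\oplus \pv{v}{Y_v}\sim \pv{v}{X_v\cap Y_v}\oplus \pv{v}{X_v\cup Y_v}$ together with $\pv{v}{X_v\cup Y_v}\leq \pv{v}{X_v\cap Y_v}$ forces the single retained representative to be the \emph{larger} projection $\pv{v}{X_v\cap Y_v}$, i.e.\ the one carrying the \emph{intersection} of the removed-edge sets; this is what the paper's own proof uses (its merged model has $(X\cup Y)\cap vE^1=X'_v\cap Y'_v$). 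You retain $\pv{v}{X_v\cup Y_v}$ instead, and the inequality your final chain depends on, $[\pVX\oplus\pWY]\leq m[\pv{U}{Z}]$ with $Z_v=X_v\cup Y_v$, is simply false: in the graph with two sources $a,b$ emitting single edges $e,f$ into a regular vertex $v\in\mys$, the projections $\pv{v}{\{e\}}\sim p_b$ and $\pv{v}{\{f\}}\sim p_a$ are nonzero and finite, while $\pv{v}{\{e\}\cup\{f\}}=0$ by \CK{3}, so no multiple of the union-model dominates the sum. (This is exactly the degeneracy that condition \eqref{nobull} of Lemma \ref{stdform} is there to exclude.) Consequently finiteness of your $\pv{U}{Z}$ carries no information about $p\oplus q$, and the last step of your argument collapses.

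The repair is to merge by intersections. Then the combinatorics reverse in the right way: since $vE^1\setminus(X'_v\cap Y'_v)=(vE^1\setminus X'_v)\cup(vE^1\setminus Y'_v)$, any path from $\infset$ witnessing criterion (iv) for the merged model has its last edge outside $X'_v$ or outside $Y'_v$, and hence already witnesses infiniteness of $p$ or of $q$; read contrapositively this is the containment you wanted, and read directly it is precisely the proof in the paper. Your closing ``alternative'' --- extending criterion (iv) to multi-summand models with the defining set taken as the union over summands --- is indeed the correct formalization of this, but as written it is an unproven wish rather than an argument, so it does not rescue the proposal on its own.
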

\begin{proof}
We may assume that $p$ and $q$ are given by models $\pVX$ and $\pWY$, respectively, and then as seen in Lemma \ref{stdform} we have that $p\oplus q$ is given by the model
\[
(V\cup W,X\cup Y)
\]
with
\[
(X\cup Y)\cap vE^1=X'_v\cap Y'_v
\]
where we set $X'_v=vE^1$ for every $v\not\in V$ and $X'_v=X_v$ otherwise, and similarly for $Y$. By (iv) of Theorem \ref{Thm:notI} we know that there must be a path starting at $\infset$ which terminates at  some $v\in V\cup W$ in such a way that the last edge of this path is not in $X_v'\cap Y_v'$. In the case that $v\not\in V$, we have that $X_v'\cap Y_v'=Y_v$, and hence we conclude that $q$ is infinite. Similarly, $p$ must be infinite when $v\not \in W$, and when $v\in V\cap W$ we get that the given path has its last edge $e$ outside of $X_v'\cap Y_v'=X_v\cap Y_v$. When $e\not \in X_v$ we may conclude that $p$ is infinite, and when $e\not \in Y_v$ that $q$ is.
\end{proof}

We end this section by giving a concrete algorithm for checking infinity of model projections:

\begin{algorithm}
\caption{\textsc{InfiniteModel}$(V,X)$}
\begin{algorithmic}[1]
\REQUIRE $V=\{v_1,\dots, v_n\}$ a finite subset of $E^0$, $X=\bigsqcup_{i=1}^nX_{v_i}$, $X_{v_i}\subseteq v_iE^1$ finite.
\STATE $W\leftarrow \left[\{v\in V\mid X_v=\emptyset\}\cup\bigcup_{X_v\not=\emptyset}s(vE^1\backslash X_v)\right]$
\STATE $\infset\leftarrow E^0_2\cup \{v\in E^0_1 \mid v\not\in\mys\text{ or }|vE^1|>1\}$
\STATE $D\leftarrow \emptyset$
\WHILE{$W\subseteq D$}
\IF{$W\cap \infset \not=\emptyset$}
\RETURN $\infty$
\ENDIF
\STATE $D\leftarrow D\cup W$
\STATE $W\leftarrow r^{-1}(s(W))$
\ENDWHILE
\RETURN $1$
\end{algorithmic}
\end{algorithm}

\begin{proposition}\label{Prop:flowchartI}
Let $C^*(E,\mys)$ be a graph $C^*$-algebra, and $(V,\XV)$ a model.
\begin{enumerate}[(i)]
\item If \textsc{InfiniteModel}$(V,X)$ terminates with $\infty$, then $\pVX$ is 
  infinite
\item If \textsc{InfiniteModel}$(V,X)$ terminates with $1$, then $\pVX$ is 
  finite
\item If \textsc{InfiniteModel}$(V,X)$ does not terminate, then $\pVX$ is 
  finite
\end{enumerate}
When $E^0$ is finite, the procedure always terminates.
\end{proposition}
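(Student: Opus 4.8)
The plan is to reduce the entire statement to the finiteness criterion of Theorem~\ref{Thm:notI}, in particular to its condition~(iv). Write $W_0$ for the vertex set
\[
\{v\in V\mid X_v=\emptyset\}\cup\bigcup_{X_v\ne\emptyset}s(vE^1\setminus X_v)
\]
with which the algorithm initializes $W$. Condition~(iv) is precisely $W_0E^*\infset=\emptyset$, that is, no vertex of $\infset$ emits a path ending in $W_0$; so by Theorem~\ref{Thm:notI} the projection $\pVX$ is infinite exactly when some vertex of $\infset$ reaches $W_0$ along a (possibly empty) path, and finite otherwise. Since \textsc{InfiniteModel} is simply a backward breadth-first search from $W_0$ that halts as soon as it meets $\infset$, the whole content of the proposition is that this search faithfully detects reachability. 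First I would therefore pin down what the variable $W$ holds on each pass.

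The key step is a loop invariant. Let $W_k$ denote the value of $W$ at the start of the $k$-th pass ($k=0,1,2,\dots$); the update replaces $W$ by $s(WE^1)=\{s(e)\mid r(e)\in W\}$, so $W_{k+1}=s(W_kE^1)$, and a one-line induction identifies $W_k$ with the set of sources of length-$k$ paths ending in $W_0$. Hence $\bigcup_{k\ge0}W_k$ is exactly the set of vertices reaching $W_0$, while $D$ holds $\bigcup_{j<k}W_j$ at the start of the $k$-th pass. Consequently a vertex of $\infset$ reaches $W_0$ if and only if $W_k\cap\infset\ne\emptyset$ for some $k$, which is exactly the event that returns $\infty$. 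This at once gives~(i): an $\infty$ output places a vertex of $\infset$ in some $W_k$, so~(iv) fails and $\pVX$ is infinite. It also gives~(iii): if the loop never halts, every $W_k$ is entered and tested and found disjoint from $\infset$, so no vertex of $\infset$ reaches $W_0$,~(iv) holds, and $\pVX$ is finite.

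The main obstacle is the normal-termination case~(ii), where the delicate point is that the loop exits before testing its current frontier: the guard (which keeps the loop running while $W\not\subseteq D$) fails on the $k$-th pass exactly when $W_k\subseteq\bigcup_{j<k}W_j$, and this $W_k$ is then never checked against $\infset$. I would close this gap by showing that $W_k\subseteq\bigcup_{j<k}W_j$ forces $\bigcup_jW_j=\bigcup_{j<k}W_j$: applying the monotone map $U\mapsto s(UE^1)$ gives $W_{k+1}=s(W_kE^1)\subseteq\bigcup_{1\le j\le k}W_j\subseteq\bigcup_{j<k}W_j$, and induction collapses the whole union into the already-tested layers $W_0,\dots,W_{k-1}$. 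Since those are disjoint from $\infset$, so is the entire reachable set, whence~(iv) holds and $\pVX$ is finite. Termination for finite $E^0$ is then a counting argument: each executed pass strictly enlarges $D$ (the guard guarantees a new vertex enters $D\cup W_k$), and $D\subseteq E^0$ is finite, so only finitely many passes can run before either an $\infty$ return or a guard failure.
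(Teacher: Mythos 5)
Your proposal is correct and follows the same route as the paper: both reduce everything to the characterization of finiteness in Theorem~\ref{Thm:notI}(iv) and prove termination for finite $E^0$ by noting that $D$ strictly grows on each pass. The paper dismisses the remaining verification as ``straightforward,'' and your loop-invariant analysis (including the observation that the final frontier $W_k$ is never tested against $\infset$ but is already contained in tested layers) is exactly the bookkeeping that makes that claim rigorous.
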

\begin{proof}
When $E^0$ is finite, the procedure must terminate because the number of elements in $D$ increases by at least one in any iteration. The remaining claims are straightforward by Theorem \ref{Thm:notI}(iv).
\end{proof}

\subsection{Properly infinite projections}

\begin{theorem}\label{Thm:notPI:pre}
Let $E=(E^0,E^1,s,r)$ be a graph, 
and $\mys$ be a subset of $E^0\reg$. 
For any projection $p\in C^*(E,\mys)$, 
the following conditions are equivalent. 
\begin{enumerate}[(i)]
\item  $p$ is not 
properly infinite in $C^*(E,\mys)$. 
\item there exists a gauge-invariant ideal $I\gidealof C^*(E,\mys)$ 
such that the image of $p$ in the quotient $C^*(E,\mys)/I$ 
is non-zero and stably finite. 
\item there exists an ideal $I\idealof C^*(E,\mys)$ 
such that the image of $p$ in the quotient $C^*(E,\mys)/I$ 
is non-zero and stably finite. 
\end{enumerate}\end{theorem}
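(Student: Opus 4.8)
The two implications (ii)$\Longrightarrow$(iii) and (iii)$\Longrightarrow$(i) are the routine ones, and I would dispose of them first. The former is immediate, a gauge-invariant ideal being in particular an ideal. For the latter, recall that proper infiniteness is expressed by the relation $2[p]\leq[p]$ in $V(C^*(E,\mys))$, which is preserved by the homomorphism induced by any quotient map; hence if $p$ were properly infinite, so would be its image $p+I$ in every quotient. A nonzero properly infinite projection is infinite, hence not stably finite, contradicting (iii). Thus (iii) forces $p$ not to be properly infinite.

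The content is (i)$\Longrightarrow$(ii), and the first move is to reduce to a model projection. By Lemma \ref{stdform} I choose a model with $[\pVX]\leq[p]\leq n[\pVX]$. If $\pVX$ were properly infinite then $2[p]\leq 2n[\pVX]\leq[\pVX]\leq[p]$, so $p$ would be properly infinite; therefore the hypothesis yields that $\pVX$ is not properly infinite (in particular $\pVX\neq 0$, since $0$ is the only finite properly infinite projection). Conversely, any gauge-invariant ideal $I$ for which $\pVX+I$ is nonzero and stably finite also serves for $p$: applying the quotient map to the displayed inequality gives $[\pVX+I]\leq[p+I]\leq n[\pVX+I]$, so $p+I$ is subequivalent to a stably finite projection and hence stably finite, while $\pVX+I\precsim p+I$ with $\pVX+I\neq 0$ forces $p+I\neq 0$. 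So it suffices to produce such an $I$ for the model projection $\pVX$, now assumed not properly infinite.

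If $\pVX$ is finite I am already done: Proposition \ref{dichoI} (taking the trivial, gauge-invariant ideal $I=0$) shows that $\pVX$ is itself stably finite, and it is nonzero, so $(ii)$ holds. Hence I may assume $\pVX$ is infinite but not properly infinite, which is the crux. Here I would pass, through the monoid isomorphism $V(C^*(E,\mys))\simeq W(E,\mys)$ of Theorem \ref{amp}, to the combinatorial picture in which gauge-invariant ideals correspond to order-ideals and quotients to monoid quotients, and then invoke the Kirchberg--R\o rdam description of proper infiniteness in the form of the dichotomy of \cite{ekmr:npic}, amended to the setting of relative graph algebras, to produce a gauge-invariant ideal $I$ in which the image of $\pVX$ is nonzero but not infinite. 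Since the quotient $C^*(E,\mys)/I$ is again a relative graph algebra, Proposition \ref{dichoI} applied there promotes ``nonzero and finite'' to ``nonzero and stably finite'', which completes the reduction and hence the theorem.

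The main obstacle is precisely this last step: for a projection that is infinite but not properly infinite one must manufacture a quotient in which it becomes nonzero and finite. This cannot come from general $C^*$-theory, since R\o rdam's examples show that a simple $C^*$-algebra may carry an infinite, non-properly-infinite projection whose only nonzero quotient image (the algebra being simple) is again infinite. What rescues the graph case is the special order structure of the monoid $W(E,\mys)$ together with the Kirchberg--R\o rdam comparison theory, and marshalling these to locate the separating gauge-invariant ideal is where the real work lies; the preceding algorithmic analysis of infiniteness in Theorem \ref{Thm:notI} is then what makes the finiteness of the resulting image automatically stable.
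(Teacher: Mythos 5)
Your reduction to a model projection, your handling of (ii)$\Rightarrow$(iii) and (iii)$\Rightarrow$(i), and your disposal of the case where $\pVX$ is finite via Proposition \ref{dichoI} all match the paper and are correct. The problem is that the one step you defer --- producing a \emph{gauge-invariant} ideal in which $\pVX$ becomes nonzero and finite --- is exactly the content of the theorem, and your sketch does not actually supply it. The Kirchberg--R\o rdam dichotomy of \cite{ekmr:npic} gives you, from the failure of proper infiniteness, an arbitrary ideal $I\idealof C^*(E,\mys)$ with $p+I$ nonzero and finite; nothing in that result, nor in the passage to the monoid $W(E,\mys)$, tells you that $I$ may be taken gauge-invariant. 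Indeed the correspondence between order-ideals of $V(A)$ and ideals of $A$ that you are implicitly invoking only captures the gauge-invariant ideals for graph algebras in general, whereas proper infiniteness is characterized by Kirchberg--R\o rdam in terms of \emph{all} ideals, so the monoid picture cannot by itself close this gap. Saying the dichotomy is ``amended to this setting'' names the missing lemma without proving it.

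The paper fills the gap with a concrete averaging argument: given the arbitrary ideal $I$ furnished by \cite{ekmr:npic}, it sets $J=\bigcap_{z\in\T}\gamma_z(I)$, which is gauge-invariant and contained in $I$, and then embeds $C^*(E,\mys)/J$ into $C(\T,C^*(E,\mys)/I)$ via $x+J\mapsto (z\mapsto\gamma_z(x)+I)$. Because the model projection $\pVX$ is fixed by the gauge action, its image under this embedding is the constant function $z\mapsto \pVX+I$, which is pointwise finite and hence finite in $C(\T,C^*(E,\mys)/I)$; injectivity of the embedding then yields that $\pVX+J$ is finite (and nonzero since $J\subseteq I$). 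Only after this does Proposition \ref{dichoI}, applied in the gauge-invariant quotient, upgrade ``finite'' to ``stably finite'' as you describe. Without some such mechanism for trading the arbitrary ideal for a gauge-invariant one, your argument establishes only the equivalence of (i) and (iii) and leaves (ii) unproved.
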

\begin{proof}
Consider 
\begin{enumerate}[(i')]\addtocounter{enumi}{1}
\item there exists a gauge-invariant ideal $I\gidealof C^*(E,\mys)$ 
such that the image of $p$ in the quotient $C^*(E,\mys)/I$ 
is non-zero and finite. 
\item there exists an ideal $I\idealof C^*(E,\mys)$ 
such that the image of $p$ in the quotient $C^*(E,\mys)/I$ 
is non-zero and finite. 
\end{enumerate}
We have that (i) and (iii') are equivalent by \cite{ekmr:npic}.
Clearly
\[
\text{(ii')}\Longrightarrow \text{(iii')}\Longleftarrow \text{(iii)}\Longleftarrow\text{(ii)}
\]
and (ii') implies (ii) by Theorem \ref{dicho}(i). We close the circle by seeing that (iii') implies (ii') as follows. First choose a model projection according to Proposition \ref{stdform} so that
\[
[\pVX]\leq [p]\leq n[\pVX]
\]
Now let $I\idealof C^*(E,\mys)$ be such that $p+I$ is finite, and note that the same is true for $\pVX+I$.
Define $J\subseteq I$ with $J\gidealof C^*(E,\mys)$ as
\[
J=\bigcap_{z\in \T}\gamma_z(I)
\]
We first aim to prove that $\pVX+J$ is finite as well. For this, consider
\[
\phi:C^*(E,\mys)/J\to C(\T,C^*(E,\mys)/I)
\]
defined by
\[
\phi(x+J)(z)=\gamma_z(x)+I.
\]
We see that $\phi$ is well-defined since $\gamma_z$ is point-norm continuous and by the definition of $J$, indeed if $x\in J$ we have $\gamma_z(x)\in I$ since $x\in J\subseteq \gamma_{\overline z}(I)$. Similarly, $\phi$ is injective, Now note that $\phi(\pVX)$ is finite in $C(\T,C^*(E,\mys))$. Indeed, if it were not, we would have $\phi(\pVX )(z)$ infinite  at some $z$, which would contradict that
\[
\phi(\pVX)(z)=\gamma_z(\pVX)+I=\pVX+I.
\]
Consequently, $\pVX$ is finite, and hence so is $p$.
\end{proof} 

\begin{remark}
In \cite{mr:sfp}, Mikael R\o rdam studied 7 different notions of finiteness of projections. For graph algebras, the results above reduce these notions to three, namely, in the notation of \cite{mr:sfp},
\[
\left\{\begin{array}{c}STF\\TF\end{array}\right\}\Longrightarrow
\left\{\begin{array}{c}F\\SF\end{array}\right\}\Longrightarrow
\left\{\begin{array}{c}NPI\\SNPI\\WTF\\SWTF\end{array}\right\}
\]
Graph algebra examples that the arrows may not be reversed are given in \cite{mr:sfp}: the units of the Toeplitz algebra and $\bK^{\sim}$, and in fact (which we shall not pursue here) only finite-dimensional projections in a graph $C^*$-algebra have the property of being tracially finite. Since graph algebras are known to be corona factorizable, the fact that being properly infinite is a stable property confirms a conjecture which is already known in the real rank zero case.
\end{remark}
 
 We are now ready to formulate a result which dramatically simplifies our work with proper infiniteness in graph algebras:
 
 \begin{corollary}\label{powerc}
 Let $p,q,r$ be projections in a graph $C^*$-algebra $C^*(E,\mys)$.
 \begin{enumerate}[(i)]
 \item When $p,q\perp r$ and, for some $n$, $[p]\leq [q]\leq n[p]$, we have
 \[
 p+r\text{ properly infinite}\Longleftrightarrow  q+r\text{ properly infinite}
 \]
 \item When $p\perp q$ and, for some $n$, $[q]\leq n[p]$, we have
\[
 p\text{ properly infinite}\Longleftrightarrow  p+q\text{ properly infinite}
 \]
 \end{enumerate}
 \end{corollary}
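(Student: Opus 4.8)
The plan is to derive the entire statement from Theorem \ref{Thm:notPI:pre}, which says that a projection fails to be properly infinite exactly when some ideal quotient renders it nonzero and stably finite. The first thing I would record is that part (ii) is a special case of part (i), obtained by taking the third projection to be $0$. Indeed, when $p\perp q$ and $[q]\le n[p]$ we have $[p+q]=[p]+[q]$, whence $[p]\le [p+q]\le (n+1)[p]$; applying part (i) to the projections $p$, $p+q$ and the zero projection then yields that $p$ is properly infinite if and only if $p+q$ is. So it suffices to prove part (i).

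For part (i), the key point is that the hypotheses upgrade to a comparison of the \emph{sums}. From $[p]\le[q]\le n[p]$ together with $p,q\perp r$ one computes (for $n\ge 1$, which we may assume)
\[
[p+r]\le[q+r]\le n[p+r],
\]
the right-hand inequality because $[q+r]=[q]+[r]\le n[p]+[r]\le n[p]+n[r]=n[p+r]$, and the left because $[p+r]=[p]+[r]\le[q]+[r]=[q+r]$. Writing $P=p+r$ and $Q=q+r$, the assertion reduces to the clean statement: if $[P]\le[Q]\le n[P]$ then $P$ is properly infinite if and only if $Q$ is. By Theorem \ref{Thm:notPI:pre} it is enough to show that $P$ admits an ideal quotient in which it is nonzero and stably finite precisely when $Q$ does.

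I would then argue each implication by transporting the comparison $[P]\le[Q]\le n[P]$ through the quotient map, which induces an order-preserving monoid homomorphism on the $V$-semigroups. If $P$ is not properly infinite, Theorem \ref{Thm:notPI:pre} supplies an ideal $I$ with $P+I$ nonzero and stably finite; then $[Q+I]\le n[P+I]$ exhibits $Q+I$ as equivalent to a subprojection of the inflation $(P+I)^{\oplus n}$, which is again stably finite, so $Q+I$ is stably finite, and it is nonzero because $[P+I]\le[Q+I]$ with $P+I\ne 0$. Hence $Q$ is not properly infinite. The reverse implication is verbatim the same after exchanging the roles of the two inequalities $[P]\le[Q]$ and $[Q]\le n[P]$.

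I expect no genuine conceptual obstacle once Theorem \ref{Thm:notPI:pre} is available; the argument uses no further feature of graph algebras. The only care required is routine bookkeeping: that a finite inflation of a stably finite projection, and any subprojection (up to equivalence) of one, remain stably finite, that the order comparisons descend to the quotient, and—most easily overlooked—that nonzeroness is preserved in \emph{both} directions, each time by invoking the inequality that points the correct way.
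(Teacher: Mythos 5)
Your proof is correct, but it runs along a different track from the paper's. The paper proves (i) entirely inside the monoid $V(C^*(E,\mys))$: assuming $2[p+r]\leq[p+r]$ it chains $2[q+r]\leq 2n[p]+2n[r]\leq[p+r]\leq[q+r]$ (using that proper infiniteness gives $m[p+r]\leq[p+r]$ for every $m$), and in the converse direction it deduces $2n[p+r]\leq n[p+r]$, i.e.\ that $n[p+r]$ is properly infinite, and then uses that proper infiniteness of an amplification forces proper infiniteness of the projection itself --- a fact which in this setting is read off from Theorem \ref{Thm:notPI:pre}. You instead push both directions through the quotient characterization of Theorem \ref{Thm:notPI:pre}, first upgrading the hypothesis to $[p+r]\leq[q+r]\leq n[p+r]$ and then matching up, for the two projections, the existence of an ideal with nonzero stably finite image; the bookkeeping you flag (subprojections and finite amplifications of stably finite projections are stably finite, nonzeroness transfers via the inequality pointing the right way) is all standard and correct. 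Both arguments ultimately rest on Theorem \ref{Thm:notPI:pre}, but the paper needs it only for one implication, its other implication being pure monoid arithmetic with no quotients; your version isolates a cleanly reusable comparison statement ($[P]\leq[Q]\leq n[P]$ ties the proper infiniteness of $P$ and $Q$ together) at the cost of invoking the dichotomy twice. Your reduction of (ii) to (i) by taking $r=0$ and replacing $q$ by $p+q$ is exactly the paper's.
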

 \begin{proof}
To prove (i), we first assume that $2[p+r]\leq [p+r]$ and see that
\[
2[q+r] \leq 2n[p]+2n[r]  \leq [p+r]\leq [q+r].
\]
In the other direction, when $2[q+r]\leq [q+r]$, we have 
\[
2n[p+r] \leq 2n[q+r] \leq [q+r] \leq n[p+r]. 
\]
which shows that  $n[p]$ is properly infinite. This suffices as seen above. For (ii), we let $r=0$ and $q=p+q$ in (i).
\end{proof}
 
\section{Property (FQ)}\label{SecPI}

In the previous section we saw in Corollary \ref{Inicesum}  that determining whether or not 
\[
p_V=p_{\{v_1,\dots,v_n\}}=p_{v_1}+\cdots+p_{v_n}
\]
is infinite reduces to checking whether or not this is the case for one of the constituent projections $p_{v_i}$. Turning now fully to the case of properly infinite projections, we are faced with the problem that 
there is in general no way to examine the proper infiniteness of $p_V$
by studying the proper infiniteness of $p_{v_i}$. Indeed, Example \ref{sumexxs} below exhibits examples in a graph algebra showing that the only thing that can be said in this setting is that the sum of two properly infinite projections is itself properly infinite.

Instead, we will develop the condition (FQ) below, a complete test for proper infiniteness, and show how to check this condition by a concrete algorithm,  guaranteed to terminate when the number of vertices in the graph is finite. We  define 4 properties (TQ), (MQ), (AQ) and (FQ) as follows:

\begin{definition} For $V \subseteq E^0$ we say that
\begin{enumerate}[(i)]
\item $V$ has \emph{Property (TQ)} for a vertex $w \in E^0$
if there exists a first returned cycle $\ell$ on $w$
such that
\[
0<\big|\{\xi \in E^*\mid s(\xi)=w, r(\xi) \in V, \xi \notin E^* \ell\}\big|
<\infty
\]
\item 
$V$ has \emph{Property (MQ)} for a vertex $w \in E^0$
if
\[
0<\big|\{\xi \in E^*\mid s(\xi)=w, r(\xi) \in V\}\big|
<\infty
\]
\item 
$V$ has \emph{Property (AQ)} for
a sequence of vertices $\{w_n\}_{n=1}^\infty$ in $E^0$
if
$V$ has Property (MQ) for $w_{n}$
and $w_{n}E^\dagger w_{n+1}\not=\emptyset$ for every $n$.
\item
$V$ has \emph{Property (FQ) relative to $\mys \subseteq E^0$}
if it satisfies either property (TQ) for some $w \in E^0$,
property (MQ) for some $w \not \in \mys$ or
property (AQ) for some sequence $w_n\in E^0$.
\end{enumerate}
\end{definition}
When $V$ has property (FQ) relative to $E^0\reg$ we just say that it has property (FQ).

\begin{remark}\label{FQremark} 
As we shall explain shortly, the letters T, M, A and F in the names of these properties are supposed to the remind the reader of torus algebras (i.e., $M_n(C(\T))$),  matrix algebras, $AF$-algebras, and finite algebras, respectively. Typical instances of these properties are indicated in Figure \ref{fqfig} in the introduction.
Our definition does not rule out that some of the vertices depicted in the (TQ) or (AQ) cases are outside of $\mys$, but in this case we also have an instance of the (MQ) condition, so 
without loss of generality this may always be assumed.

In the situation of Property (TQ), one notes that 
$\ell$ is the unique first returned cycle on $w$.
Hence if $E$ satisfies Condition (K),
then no $V$ satisfies Property (TQ).

In the definition of Property (AQ),
we have required that there exist  paths $\xi_n$
of positive length such that $r(\xi_n)=w_{n+1}$ and $s(\xi_n)=w_{n}$.
Since there is no returned path on $w_n$ according to the definition
of property (MQ),
we see that $w_n \neq w_m$ for $n \neq m$.
Hence if $|E^0| < \infty$
then no $V$ satisfies Property (AQ).
\end{remark}

\begin{proposition}\label{prop:stablyfinite}
Let $E=(E^0,E^1,s,r)$ be a graph with $V\subseteq E^0$ finite and $\mys \subseteq
E^0\reg$. If $V$ has {(FQ) relative to $\mys$}, there exists $I\gidealof C^*(E,\mys)$ such that
$q(C^*(E,\mys)/I)q$ is either $M_n(C(\T))$ or an AF algebra,
where $q =p_V+I \in C^*(E,\mys)/I$.
 \end{proposition}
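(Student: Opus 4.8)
The plan is to treat the three alternatives in the definition of (FQ) separately, producing in each case a gauge-invariant ideal $I\gidealof C^*(E,\mys)$ by exhibiting a hereditary and saturated set of vertices, and then to identify the corner $q(C^*(E,\mys)/I)q$ using Proposition \ref{cornerisgraph} together with Lemma \ref{infibasics}. The guiding observation is that the corner cut out by $p_V$ only ``sees'' the set
\[
R=s(VE^*)
\]
of vertices that emit a path into $V$: every generator $s_\xi s_\eta^*$ surviving multiplication by $p_V$ has $r(\xi),r(\eta)\in V$, so $\xi,\eta$ visit only vertices of $R$. Thus it suffices to arrange, by a suitable quotient, that the surviving part of $R$ has a very restricted shape --- acyclic in the (MQ) and (AQ) cases, and carrying a single entryless cycle in the (TQ) case.

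First I would build the ideal from the witness. In the (MQ) and (TQ) cases, with witness $w$, set
\[
H=R\setminus r(E^*w),
\]
the vertices that can reach $V$ but cannot be reached from $w$. This $H$ is hereditary in the sense of Definition \ref{defnap}: any edge into a vertex of $H$ issues from a vertex that still reaches $V$ and still is not reached from $w$, hence lies in $H$. Since by hypothesis $w$ emits at least one path to $V$, some $v_0\in V$ lies in $r(E^*w)$ and survives, so $q=p_V+I=p_{V\setminus H}\neq 0$. The surviving ancestors of $V$ are exactly $R\cap r(E^*w)$, the vertices lying on paths from $w$ to $V$; the finiteness built into (MQ)/(TQ) forces this set to be finite, and it receives only from itself, so it is hereditary in the quotient graph $E\setminus H$ and Proposition \ref{cornerisgraph} (or Proposition \ref{passto}) applies to it. Passing to the saturation of $H$ only adjoins vertices receiving all their edges from $H$, and no such vertex lies on a $w$-to-$V$ path, so the corner at $q$ is unaffected. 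In the (AQ) case I would instead take $H=R\setminus\bigcup_n r(E^*w_n)$, whose surviving ancestors $\bigcup_n\big(R\cap r(E^*w_n)\big)$ form an infinite but acyclic set, each $w_n$ contributing a finite acyclic piece by its (MQ) property.

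With the ideal in hand I would identify the corner. Writing $F$ for the subgraph of $E\setminus H$ supported on the surviving ancestors of $V$, the corner is a corner of the relative graph algebra of $F$. In the (MQ) and (AQ) cases $F$ has no cycle, whence $\infset[F]=\emptyset$ and Lemma \ref{infibasics}(iii) gives an $AT$ algebra with no circle blocks, i.e.\ AF (a finite direct sum of matrix algebras in the (MQ) case, an AF limit in the (AQ) case). In the (TQ) case $F$ carries a single cycle, namely the first returned cycle $\ell$ on $w$, which is unique by Remark \ref{FQremark}; its vertices may be assumed to lie in $\mys$ (again by Remark \ref{FQremark}, as otherwise we are already in the (MQ) case), and $\ell$ acquires no entry in the quotient, since any entry to $\ell$ from a vertex reached by $w$ would produce a second first returned cycle on $w$, while entries from vertices not reached by $w$ lie in $H$. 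Hence every cycle vertex lands outside $\infset$, the surviving structure is a single entryless cycle reaching $V$ through finitely many transversal paths, and the corner is $M_n(C(\T))$.

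The main obstacle I expect is precisely the (TQ) identification as $M_n(C(\T))$ on the nose: one must verify that after quotienting the cycle $\ell$ survives with no entry and with all of its vertices regular (in $\mys$), so that $s_\ell$ restricts to a unitary and the $C(\T)$ appears undamped, and that the finitely many transversal paths amplify this to a full matrix algebra over $C(\T)$ rather than to a proper corner or a Toeplitz-type extension. This forces careful use of the uniqueness of the first returned cycle and of the reduction in Remark \ref{FQremark}, and it is here --- rather than in the more routine acyclic (MQ)/(AQ) cases --- that the bookkeeping of the saturation and of the relative Cuntz--Krieger relation at the cycle must be handled with care.
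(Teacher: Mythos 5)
Your overall strategy coincides with the paper's: in each branch of (FQ) you pass to a gauge-invariant quotient determined by a hereditary set chosen so that the corner at $p_V$ only sees the graph traced by the paths from the witness to $V$, and then you identify that corner as $M_n(C(\T))$ (one entryless cycle) or AF (no cycles). The paper simply uses the larger hereditary sets $E^0\setminus r(E^\dagger w)$, $E^0\setminus r(E^*w)$ and $E^0\setminus\bigcup_n r(E^\dagger w_n)$ in place of your intersections with $R=s(VE^*)$; this changes nothing in the corner, and your treatment of the (TQ) identification is at least as careful as the paper's.

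There is, however, a step that fails in your (MQ) case, precisely where the hypothesis $w\notin\mys$ has to be used --- and you never use it. You assert that saturating $H=R\setminus r(E^*w)$ ``only adjoins vertices receiving all their edges from $H$, and no such vertex lies on a $w$-to-$V$ path.'' The witness $w$ itself refutes this: (MQ) forbids a cycle through $w$, so every edge $e$ with $r(e)=w$ has $s(e)\notin r(E^*w)$ while $s(e)$ reaches $V$ through $w$, i.e.\ $s(e)\in H$ (and if $w$ receives no edge at all, the saturation condition holds vacuously). Hence any admissible pair $(H,\myr)$ with $w\in\myr$ forces $w$ into the ideal, and the saturation then cascades down every $w$-to-$V$ path and swallows $V$, collapsing your corner to $q=0$ --- which is useless for the intended application in Theorem \ref{Thm:notPI}, where the image of $p_V$ must be nonzero. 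The repair is exactly what the paper does: take the second coordinate of the admissible pair to be $E^0\reg\setminus\{w\}$, which is permitted precisely because the (MQ) clause of (FQ) requires $w\notin\mys$. In the (TQ) and (AQ) branches your claim is correct, since there $w$ (resp.\ each $w_n$) receives an edge from a surviving vertex, namely its predecessor on the cycle $\ell$ (resp.\ the penultimate vertex of the positive-length path from $w_{n+1}$ to $w_n$); only the isolated (MQ) witness needs to be shielded from saturation by removing it from $\myr$.
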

\begin{proof}
Assume first that $V$ has (TQ) for the vertex $w$. We choose the ideal $I$ given by the \nap
\[
(E^0\setminus r(E^\dagger w),E\reg^0),
\]
noting that the complement of $r(E^\dagger w)$ is in fact hereditary and (globally) saturated. The corresponding corner of the quotient is the graph algebra of the graph traced by all paths between $w$ and $V$, which in this case is finite and generated by a cycle with no entries, so that this corner is isomorphic to $M_n(C(\T))$.

Now consider the case where $V$ has (MQ) for the vertex $w\not\in \mys$. We choose the ideal $I$ given by the \nap
\[
(E^0\setminus r(E^* w),E\reg^0\setminus\{w\}).
\]
Again we easily see that the first set is hereditary, but this time we note that $w$ emits all of its edges into the complement of $r(E^*w)\cup\{w\}$ without being a member of the set, so we need to ensure that $w$ is not a member of the second set. Since $w\not\in\mys$, this can be arranged. Again the corner is a graph algebra of the graph traced by all paths between $w$ and $V$, which in this case is finite and has no cycles, so that this corner is isomorphic to $M_n(\C)$.
In the last case, we take
\[
\left(E^0\setminus \bigcup _{n\in \N}r(E^\dagger w_n),E\reg^0\right),
\]
and get that the corner is given by an infinite graph with no cycles. Hence, the graph algebra is AF.
\end{proof}

We have seen that $p_V$ fails to be properly infinite when $V$ has (FQ). To prove the opposite,  we address the problem of how to algorithmically determine
whether  a given set $V$ has (FQ). We do this by presenting a sequence of algorithms which, taken together, determine whether or not a finite set $V$ has property (FQ) or not in a graph with finite $E^0$, and in general runs infinitely only on those sets $V$ that do have the property. 

Our first algorithm extracts the ``top'' of a given set $V$, a subset $W$ of vertices which cover $V$ in the sense that $W\subseteq s(VE^*)$, and which is chosen as economically as possible. In general, the top of a set is not unique, and it is of no consequence which one is taken, but to well-define our algorithms we provide a concrete algorithm for extracting one such set, which we then work with throughout.

\begin{algorithm}
\caption{\textsc{GetTop}$(V)$}
\begin{algorithmic}[1]
\REQUIRE $V=\{v_1,\dots v_n\}$ a finite subset of $E^0$
\STATE $W\leftarrow \emptyset$
\FOR{ $v\in V$}
\IF{$v\not\in s( WE^*)$}
\FOR{ $w\in W$}
\IF{$w\in s(vE^*)$}
\STATE{$W\leftarrow W\setminus \{w\}$}
\ENDIF 
\ENDFOR
\STATE{$W\leftarrow W\cup \{v\}$}
\ENDIF
\ENDFOR
\RETURN $W$
\end{algorithmic}
\end{algorithm}


\begin{lemma}\label{gettop}
Let $V\subseteq E^0$ be a finite set, and let $W=$\textsc{GetTop}$(V)$. We then have
\begin{enumerate}[(i)]
\item $W\subseteq V$ and $W\subseteq s(VE^*)$
\item For any $w_1,w_2\in W$, if $w_1\in s(w_2 E^*)$ then $w_1=w_2$
\item For some $N$, we have $[p_W]\leq [p_V]\leq N[p_W]$
\end{enumerate}
\end{lemma}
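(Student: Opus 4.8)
The plan is to recast \textsc{GetTop} in terms of the reachability preorder on $E^0$ in which $u$ lies below $u'$ precisely when $u\in s(u'E^*)$, i.e.\ when there is a path $\xi$ with $s(\xi)=u$ and $r(\xi)=u'$. The only algebraic input needed is that such a path yields $[p_u]\leq[p_{u'}]$: by \CK{1} and \CK{2} the element $s_\xi$ satisfies $s_\xi^*s_\xi=p_u$ and $s_\xi s_\xi^*\leq p_{u'}$, so $p_u\sim s_\xi s_\xi^*\precsim p_{u'}$. Everything else is a bookkeeping argument about what the algorithm maintains, which I would organize around two loop invariants established by induction on the number of vertices of $V$ already processed.

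Part (i) I would dispatch immediately: the algorithm only ever inserts elements of $V$ into $W$, so $W\subseteq V$; and since every vertex is joined to itself by the trivial (length $0$) path, each $w\in W\subseteq V$ lies in $s(VE^*)$, giving $W\subseteq s(VE^*)$. For (ii) I would prove the invariant that $W$ is always an antichain, i.e.\ no two distinct $w_1,w_2\in W$ satisfy $w_1\in s(w_2E^*)$. This is vacuous initially. A vertex $v$ is inserted only when $v\notin s(WE^*)$, so $v$ reaches no current element of $W$; moreover the inner loop first deletes from $W$ exactly the set $\{w\in W\mid w\in s(vE^*)\}$ of elements reaching $v$. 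Hence in the updated set no surviving element reaches $v$ (those were deleted) and $v$ reaches no surviving element (by the guard), while comparisons among surviving elements are inherited; so the antichain property is restored, and at termination this is exactly (ii).

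The substance of (iii) is the covering statement $V\subseteq s(WE^*)$, which I would prove as the invariant that after each vertex of $V$ has been processed, every processed vertex reaches some element of the current $W$. The one delicate point is that an element $w$ covering some earlier $v_i$ may later be deleted; but $w$ is deleted only when some $v$ with $w\in s(vE^*)$ is inserted into $W$, and then $v_i\in s(wE^*)$ together with $w\in s(vE^*)$ gives $v_i\in s(vE^*)$ by concatenation of paths, so coverage passes to the freshly inserted $v\in W$. Granting this invariant, (iii) follows cleanly: $W\subseteq V$ gives $p_W\leq p_V$ and hence $[p_W]\leq[p_V]$, while for each $v\in V$ a path to some $w\in W$ yields $[p_v]\leq[p_w]\leq[p_W]$; summing over the mutually orthogonal family $\{p_v\}_{v\in V}$, whose sum is $p_V$, gives $[p_V]\leq|V|\,[p_W]$, so $N=|V|$ works.

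The main obstacle is purely combinatorial rather than algebraic: correctly tracking how the delete-then-insert step interacts with both invariants simultaneously. In particular, the crux is to verify that the guard $v\notin s(WE^*)$ and the removal of $\{w\in W\mid w\in s(vE^*)\}$ together both preserve the antichain property of (ii) and transfer coverage in the proof of $V\subseteq s(WE^*)$. Once those two invariants are in hand, the passage to the comparison of projections in (iii) is routine.
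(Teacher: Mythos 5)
Your proof is correct and is essentially the paper's argument: the paper dismisses (i) and (ii) with ``inspection of the algorithm'' and gets (iii) from $p_W\leq p_V$ together with the covering property of $W$, which is exactly the loop-invariant bookkeeping and the path-induced subequivalences $[p_{s(\xi)}]\leq[p_{r(\xi)}]$ that you carry out in detail. One useful observation: as printed, part (i) ($W\subseteq s(VE^*)$) is indeed trivial from $W\subseteq V$, and the statement actually needed --- and the one invoked later, e.g.\ in the proof of Lemma \ref{goldengoal} and in the paper's own derivation of (iii) via $p_V\in I(p_W)$ --- is the covering property $V\subseteq s(WE^*)$, which you correctly identify and prove as the substantive content of (iii).
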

 \begin{proof}
The first two claims follows from inspection of the algorithm, and (iii) follows by the obvious fact that $p_W\leq p_V$ and by (i) since we may infer that $p_V\in I(p_W)$.
\end{proof}


 We now come to the key algorithm which helps us identify whether a finite set $V$ has $(FQ)$ relative to $\mys$.

\begin{algorithm}
\caption{\textsc{FindFQ}$(V,R)$}
\begin{algorithmic}[1]
\REQUIRE $V,R$ finite subsets of $E^0$ with $R\subseteq E^0_2$
\STATE $W\leftarrow $\textsc{GetTop}$(V\setminus s({R}E^*))$
\IF{$W\cap [E^0_1\cup (E^0_0\setminus \mys)]\not=\emptyset $ }
\RETURN $(\star,\star)$
\ELSE
\RETURN $(s((W\setminus E^0_2)E^1),R\cup(V\cap E^0_2))$
\ENDIF 
\end{algorithmic}
\end{algorithm}

Note that when line 5 is reached, $s((W\setminus E^0_2)E^1$ and $R\cup(V\cap E^0_2)$ will both be finite sets with the latter contained in $E^0_2$. Hence we may iterate $\textsc{FindFQ}$ as long as the output is not $(\star,\star)$. Our key observation is 

\begin{lemma}\label{findFQi} 
When 
\[
(\star,\star)\not =(V',R')=\text{\textsc{FindFQ}}(V,R),
\]
we have
\[
p_{V'\cup R'}\text{ is properly infinite }\Longrightarrow p_{V\cup R} \text{ is properly infinite}
\]
\end{lemma}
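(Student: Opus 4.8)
The plan is to pass from $(V\cup R)$ to $(V'\cup R')$ in two reduction steps, both carried out in the monoid $V(C^*(E,\mys))$ using the domination relation $\leq$ together with the observation recorded in the semigroup language following Definition \ref{pidef} that a projection $p$ is properly infinite if and only if $n[p]\leq[p]$ for some $n\geq 2$. The hypothesis $(V',R')\neq(\star,\star)$ means precisely that $W=\textsc{GetTop}(V\setminus s(RE^*))$ satisfies $W\cap[E^0_1\cup(E^0_0\setminus\mys)]=\emptyset$, so $W\subseteq E^0_2\cup(E^0_0\cap\mys)$. Writing $W_2=W\cap E^0_2$ and $W_0=W\setminus E^0_2$, we then have $W_0\subseteq\mys\cap E^0\reg$ (so that \CK{3} is available at each $w\in W_0$), and by construction $V'=s(W_0E^1)$ and $R'=R\cup(V\cap E^0_2)$. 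Two bookkeeping identities will be used throughout: since $V\cap E^0_2\subseteq V$ we have $V\cup R=V\cup R'$, and since $W_2\subseteq V\cap E^0_2\subseteq R'$ we have $W\cup R'=W_0\cup R'$.

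First I would reduce $V$ to $W$, showing $p_{V\cup R'}$ is properly infinite iff $p_{W\cup R'}$ is. The extra vertices $Z=V\setminus(W\cup R')$ are disjoint from $W\cup R'$, and each $v\in Z$ is dominated by $p_{W\cup R'}$: if $v\in s(RE^*)$, a path $\alpha$ from $v$ into $R$ gives $p_v=s_\alpha^*s_\alpha\sim s_\alpha s_\alpha^*\leq p_{r(\alpha)}\leq p_{R'}$, so $[p_v]\leq[p_{R'}]$; otherwise $v\in V\setminus s(RE^*)$ and Lemma \ref{gettop}(iii) gives $[p_v]\leq[p_{V\setminus s(RE^*)}]\leq N[p_W]$. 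Hence $[p_Z]\leq k[p_{W\cup R'}]$ for some $k$, and Corollary \ref{powerc}(ii) applies with $p=p_{W\cup R'}$ and $q=p_Z$ (note $p+q=p_{V\cup R'}$) to yield the equivalence. Combined with the identities above this gives the equivalence of $p_{V\cup R}$ being properly infinite with $p_{W_0\cup R'}$ being properly infinite.

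The heart of the argument is the second reduction, relating $W_0$ to $V'$ through \CK{3}. For $w\in W_0$ this relation gives $p_w=\sum_{e\in wE^1}s_es_e^*$ with $s_es_e^*\sim s_e^*s_e=p_{s(e)}$, whence $[p_{W_0}]=\sum_{e\in W_0E^1}[p_{s(e)}]=\sum_{v\in V'}m_v[p_v]$, where $m_v=|\{e\in W_0E^1\mid s(e)=v\}|\geq 1$ for every $v\in V'=s(W_0E^1)$ and $M'=\max_v m_v<\infty$. Since $W_0\perp R'$ we have $[p_{W_0\cup R'}]=[p_{W_0}]+[p_{R'}]$, and splitting the sum over $V'\cup R'$ shows $[p_{V'\cup R'}]\leq\sum_{v\in V'}m_v[p_v]+[p_{R'}]=[p_{W_0\cup R'}]$ (using $m_v\geq 1$); conversely $[p_{W_0\cup R'}]\leq M'[p_{V'}]+[p_{R'}]\leq(M'+1)[p_{V'\cup R'}]$. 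Thus, with $a=[p_{V'\cup R'}]$ and $b=[p_{W_0\cup R'}]$, we have $a\leq b\leq(M'+1)a$; proper infiniteness of $p_{V'\cup R'}$ gives $ma\leq a$ for all $m$, so $2b\leq 2(M'+1)a\leq a\leq b$, and $p_{W_0\cup R'}$ is properly infinite. Chaining with the first step completes the proof.

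The step I expect to require the most care is this second reduction. The subtlety is that $V'=s(W_0E^1)$ may meet $R'$, so that $p_{V'}$ and $p_{R'}$ need not be orthogonal and Corollary \ref{powerc}(i) cannot be quoted verbatim; this is what forces the direct monoid estimate above in place of a clean application of the corollary. One must also be scrupulous that \CK{3} is genuinely available at each $w\in W_0$—which is exactly what the non-$(\star,\star)$ hypothesis guarantees by placing $W_0$ inside $\mys\cap E^0\reg$—and that the multiplicities $m_v$ are at least $1$, which is precisely what makes the two inequalities fit the pattern needed to transfer proper infiniteness.
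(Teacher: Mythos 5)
Your proof is correct and follows essentially the same route as the paper's: both reduce $V\cup R$ to the top $W$ via Lemma \ref{gettop} and Corollary \ref{powerc}, then exploit \CK{3} at the vertices of $W\setminus E^0_2$ (available exactly because the non-$(\star,\star)$ hypothesis places them in $\mys$) to pass to $V'\cup R'$. The only organizational difference is that the paper handles the possible overlap $V'\cap R'$ by working with $p_{R'\setminus V'}$ so that Corollary \ref{powerc}(i) applies directly, whereas you substitute an explicit monoid estimate $a\leq b\leq(M'+1)a$ — both are valid and amount to the same idea.
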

\begin{proof}
We establish the sequence of implications
\begin{eqnarray*}
p_{V'\cup R'}\text{ properly infinite }&\Longleftrightarrow &p_{W\backslash E^0_2}+p_{R'\setminus V'} \text{ properly infinite }\\
&\Longrightarrow &p_{W\cup R'}\text{ properly infinite }\\
&\Longleftrightarrow &p_{V\setminus s(RE^*)}+p_{R'\setminus W}\text{ properly infinite }\\
&\Longrightarrow &p_{V\setminus s(RE^*)}+p_{R}\text{ properly infinite }\\
&\Longleftrightarrow &p_{V\setminus s(RE^*)}+p_{R} +p_{(V\cap s(RE^*))\setminus R}\text{ properly infinite }\\
\end{eqnarray*}
which proves the desired result since the last projection is precisely $p_{V\cup R}$.

For the first implication, we note that 
\[
p_{W\backslash E^0_2}=\sum_{v\in W\backslash E^0_2}\sum_{e\in vE^1}s_es_e^*
\]
since by our algorithm, $W\backslash E^0_2\subseteq \mys$. The latter projection is equivalent to
 \[
 \bigoplus_{v\in W\backslash E^0_2}\sum_{e\in vE^1}s_e^*s_e= \bigoplus_{v\in W\backslash E^0_2}\sum_{e\in vE^1}p_{r(e)}
 \]
 where, by definition, $r(e)$ will range in $V'$, with each vertex occurring at least once at at most $n$ for some suitable $n$. Hence we may apply Corollary \ref{powerc}(i). The second implication follows by noting that every vertex in 
 \[
(W\cup R')\setminus [(W\backslash E^0_2)\cup(R'\setminus V')]
\]
lies in $E^0_2$, so that the projection which has been added is automatically properly infinite. We then apply Corollary \ref{powerc}(i) again, this time to 
\[
[p_W]\leq [p_{V\setminus s(RE^*)}]\leq n[p_W]
\]
which  follows by Lemma \ref{gettop}(iv), and establish the fourth implication by noting that every vertex in $R\setminus (R'\setminus W)$ is in $E^0_2$. We end by appeling to  Corollary \ref{powerc}(ii), which applies since $p_{(V\cap s({R}E^*))\setminus R}\in I(p_R)$.
\end{proof}

To further analyze the procedure \textsc{FindFQ} we introduce notation. 
Let $\emptyset\not= V_0\subseteq E^0$ and $R_0\subseteq E^0_2$ be finite sets and assume that \textsc{FindFQ} can be iterated $m+1$ times so that the returned pairs
\begin{eqnarray*}
(V_1,R_1)&=&\text{\textsc{FindFQ}}(V_0,R_0)\\
&\vdots&\\
(V_{n-1},R_{n-1})&=&\text{\textsc{FindFQ}}(V_{n-2},R_{n-2})=\text{\textsc{FindFQ}}^{\circ m}(V_0,R_0)
\end{eqnarray*}
are never $(\star,\star)$ nor have $V_i=\emptyset$, but that 
\[
(V_{m+1},R_{m+1})=\text{\textsc{FindFQ}}(V_{m},R_{m})=\text{\textsc{FindFQ}}^{\circ m+1}(V_0,R_0)
\]
is arbitrary. We denote by $W_k$ the subsets of $V_k$ produced in line 2 of the $(k-1)$st iteration of \textsc{FindFQ}; these sets will then be defined up to $k=m$.

\begin{example}\label{FQrun}
Let us show the steps in a complicated analysis performed by \textsc{FindFQ}.
\begin{center}
\parbox{125pt}{%
$\xymatrix{
&\bullet&\\
\circ\ar[ur]^(0.9){1}^(0.1){2}\ar[r]&\circ\ar@(ru,rd)[]_-{3}\ar@(r,d)[]\ar[u]&\bullet\ar[lu]_(0.1){4}\\
&\bullet\ar@(l,d)[]^-{5}\ar[u]\ar[ur]&\bullet\ar[u]_(0.1){6}\\
&\circ\ar@(r,d)@{=>}_-{7}\ar[ur]&}$}
\begin{tabular}{c||c|c|c}
$m$&$V_m$&$W_m$&$R_m$\\\hline\hline
0&$\{1\}$&$\{1\}$&$\emptyset$\\
1&$\{2,{3},4\}$&$\{{3},4\}$&$\emptyset$\\
2&$\{5,6\}$&$\{6\}$&$\{3\}$\\
3&$\{7\}$&$\{7\}$&$\{3\}$\\
4&$\emptyset$ &$\emptyset$ &$\{3,7\}$
\end{tabular}
\end{center}
\end{example}

\begin{lemma}\label{goldengoal}
Suppose \textsc{FindFQ}$(V_0,R_0)$ can be iterated $m+1$ times as above, and let $w_m\in W_m$ be given
\begin{enumerate}[(i)]
\item If $w_m\in E^0_0$, then $V_0$ has (MQ) for $w_m$
\item If $w_m\in E^0_1$, then $V_0$ has (TQ) for $w_m$
\item If $m>0$, then $w_m\not\in W_0$.
\end{enumerate}
\end{lemma}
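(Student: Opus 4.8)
The plan is to extract everything from the downward ``backbone'' that \textsc{FindFQ} builds, combined with the minimality of the tops recorded in Lemma~\ref{gettop}. \textbf{Step 1 (the backbone; existence and (iii)).} Since the first $m$ iterations succeed, for $1\le k\le m$ we have $V_k=s((W_{k-1}\setminus E^0_2)E^1)$ and $W_k\subseteq V_k\setminus s(R_kE^*)$, while the passed checks give $W_{k-1}\setminus E^0_2\subseteq E^0_0\cap\mys$. Thus every vertex of $V_k$ emits an edge into $W_{k-1}\setminus E^0_2\subseteq V_{k-1}$, and starting from $w_m\in W_m\subseteq V_m$ I can choose such edges successively to obtain a path $w_m\to u_{m-1}\to\cdots\to u_0$ of length $m$ with $u_k\in W_k\setminus E^0_2$ and $u_0\in W_0\subseteq V_0$. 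This single path gives the strict lower bound $0<|\cdots|$ in (MQ)/(TQ) (for (TQ), $u_0$ — the trivial path when $m=0$ — is not $w_m$, so the backbone path is not of the form $\eta\ell$). It also settles (iii): if $m>0$ and $w_m\in W_0$, then $w_m,u_0\in W_0$ with $w_m$ emitting a path to $u_0$, so Lemma~\ref{gettop}(ii) forces $w_m=u_0\in W_0\setminus E^0_2\subseteq E^0_0$, making the length-$m$ backbone path a cycle based at a vertex of $E^0_0$, which is impossible.

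\textbf{Step 2 (finiteness, the heart of the matter).} For $0\le k\le m$ set $\Gamma_k=\{v\mid v\in r(E^*w_m)\text{ and }v\in s(V_kE^*)\}$, the vertices reachable from $w_m$ that reach $V_k$. The upper bound in (MQ)/(TQ) will follow once $\Gamma_0$ is known to be finite with controlled cycles. The key reduction is the inclusion
\[
\Gamma_k\subseteq \Gamma_{k+1}\cup\bigl(W_k\setminus E^0_2\bigr)\qquad(0\le k\le m-1).
\]
Indeed, take $y\in\Gamma_k$ that does not reach $V_{k+1}$. As $V_{k+1}$ is exactly the set of one-step predecessors of $W_k\setminus E^0_2$, $y$ can reach $W_k\setminus E^0_2$ only trivially; so either $y\in W_k\setminus E^0_2$, or the path witnessing $y\in\Gamma_k$ ends at a vertex lying in $V_k\cap E^0_2$, in $s(R_kE^*)$, or — by the covering property of the top (cf.\ Lemma~\ref{gettop}) — at a vertex of $W_k\cap E^0_2$. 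In each of these exceptional cases $y$, and hence $w_m$, would emit a path into $R_m\supseteq R_k\cup(V_k\cap E^0_2)$, contradicting $w_m\in W_m\subseteq V_m\setminus s(R_mE^*)$. Telescoping down to level $m$ gives
\[
\Gamma_0\subseteq \Gamma_m\cup\bigcup_{k=0}^{m-1}\bigl(W_k\setminus E^0_2\bigr),
\]
whose second summand is a finite subset of $E^0_0\cap\mys$ (regular, so it contributes no infinite receivers).

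\textbf{Step 3 (the base level and conclusion).} It remains to analyse $\Gamma_m$. If $w_m$ reaches $v\in V_m$ then $v\notin s(R_mE^*)$ (else $w_m$ reaches $R_m$), so $v$ reaches $W_m$ by the covering property, whence $w_m$ reaches $W_m$; the antichain property forces the common point to be $w_m$, so $v$ reaches $w_m$ and $v$ lies on a cycle based at $w_m$. If $w_m\in E^0_0$ there is no such cycle, so $\Gamma_m=\{w_m\}$; then $\Gamma_0$ is finite and acyclic with $\Gamma_0\setminus\{w_m\}$ regular, giving finitely many paths $w_m\to V_0$, i.e.\ (i)/(MQ). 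If $w_m\in E^0_1$ the only cycles based at $w_m$ are powers of its unique first returned cycle $\ell$, so $\Gamma_m\subseteq\{w_m\}\cup\{\text{vertices of }\ell\}$ and the only cycle appearing in the finite set $\Gamma_0$ is $\ell$; discarding the paths of the form $\eta\ell$ (those recording a final loop around $\ell$) leaves only finitely many, i.e.\ (ii)/(TQ).

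\textbf{Main obstacle.} Steps 1 and the $E^0_0$ case of Step 3 are routine; the essential work is the inclusion of Step 2, where one must rule out $w_m$ reaching $V_k$ ``off the backbone,'' and this is precisely where the bookkeeping set $R_m$ (absorbing every $E^0_2$ vertex met so far) together with the minimality of the tops is indispensable. The most delicate point is the $E^0_1$ case of Step 3: one has to verify that no vertex of $\Gamma_0$ other than those of $\ell$ supports a cycle or is an infinite receiver, so that ``paths not ending in $\ell$'' really is a finite set — this is exactly what the shape of condition (TQ) is designed to capture.
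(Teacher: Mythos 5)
Your overall strategy is sound and genuinely different from the paper's: where the paper argues by induction on $m$, applying the inductive hypothesis to the shifted sequence $(V_1,R_1),\dots$ and decomposing a path from $w_m$ to $W_0$ as a path to $V_1$ followed by a single edge into a finite receiver of $W_0\setminus E^0_2$, you control the whole reachable set at once via the telescoping inclusion $\Gamma_k\subseteq\Gamma_{k+1}\cup(W_k\setminus E^0_2)$. Step 1, Step 2, part (iii), and the (MQ) case of Step 3 are correct: every vertex on a path from $w_m$ to $V_0$ lies in $\Gamma_0$, which you show is a finite set whose elements other than $w_m$ lie in $E^0_0\cap\mys$, so such a path cannot repeat a vertex and each of its edges lands at a finite receiver. (One small repair: your stated reason that the backbone path witnesses the lower bound in (TQ) --- that $u_0\neq w_m$ --- is not the relevant fact; what you need is that the intermediate vertices $u_k$, $k<m$, lie in $E^0_0$, so the backbone never revisits $w_m$ and therefore cannot contain a full traversal of $\ell$.)

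The genuine gap is in the (TQ) case of Step 3, precisely at the point you flag as ``the most delicate'' and leave unverified --- and the condition you say must be verified is the wrong one. The danger is not that a vertex of $\Gamma_0$ \emph{off} $\ell$ is an infinite receiver (those lie in $\mys\subseteq E^0\reg$, so they are fine); it is that a vertex $u$ \emph{of} $\ell$ may receive several, possibly infinitely many, edges from vertices of $\Gamma_0$. Each such edge would generate new paths $w_m\to V_0$ that avoid a full traversal of $\ell$, so finiteness of the vertex set $\Gamma_0$ alone does not bound the count in (TQ). The missing argument is: if $f$ is an edge with $r(f)=u$ a vertex of $\ell$ and with $s(f)$ reachable from $w_m$, then concatenating a path $w_m\to s(f)$ with $f$ and with the remainder of $\ell$ from $u$ back to $w_m$ yields a cycle based at $w_m$; since $w_m\in E^0_1$ this cycle must be a power of the unique first-return cycle $\ell$, which forces $f$ to be an edge of $\ell$. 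Hence inside $\Gamma_0$ each vertex of $\ell$ receives exactly one edge, every cycle met by a path in $\Gamma_0$ passes through $w_m$, and a path $\xi\notin E^*\ell$ never returns to $w_m$ and so never repeats a vertex; with this inserted your count closes, but without it the (TQ) case is not proved.
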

\begin{proof}
We prove this by induction on $m$ with arbitrary choices of $(V_0,R_0)$ on which \textsc{FindFQ} can be iterated $m+1$ times. For $m=0$ and given $w_0$ there is of course 
the path of lenght zero to $V_0$. Let any other path $\alpha$ have $s(\alpha)=w_0$ and $v_0=r(\alpha)\in V_0$. Since there is no path to $R_0$ from $w_0$, the same must be true of $v_0$, and hence by Lemma \ref{gettop}(i) there is a path $\beta$ from $v_0$ to some $w_0'\in W_0$. By Lemma \ref{gettop}(ii) we conclude that $w_0=w_0'$. If $w_0\in E^0_0$ we have that in fact $|\beta\alpha|=0$, so there is no such non-trivial path, proving (i). The claim (ii) follows similarly by noting that $\alpha\beta$ must be a finite number of traversals of the unique cycle based on $w_0$.

When $m>0$ we first note that by construction, $w_m=s(e)$ for some edge $e$ with 
\[
w_{m-1}=r(e)\in W_{m-1}\setminus E_2^0=W_{m-1}\cap E^0_0\cap\mys
\]
By (i) of the induction hypothesis there is a path $\alpha$ from $w_{m-1}$ to $v_0\in V_0$, and as above it may be extended by $\beta$ having $w_0=r(\beta)\in W_0$ since we know that $w_m\not\in s(R_0E^*)\subseteq s(R_mE^*)$. This time we further may conclude that $w_0\in E^0_0\cap \mys$ since we know that $w_0\not\in W_0\cap E^0_2\subseteq s(R_mE^*)$.
Indeed, since $w_0\in E^0_2$ is ruled out this is the only remaining possibility because \textsc{FindFQ} could be iterated more than once.

To prove (iii), assume that $w_m\in W_0$. Then by Lemma \ref{gettop}(ii) we would have $w_m=w_0$ and consequently $w_0\in W_0\cap E^0_1$, which we have seen is impossible. For (i), assume that $w_m\in E^0_0$ and note that the last edge $f$ of $\beta\alpha e$ must have $s(f)\in V_1$. By (i) of the induction hypothesis applied to $(V_1,R_1)$ we see that there is  only a finite number of paths starting at $w_m$ and ending in $V_1$. Since $w_{m-1}$ lies in $\mys$ and in partcicular must be a finite receiver, this means that there are also only finitely many possibilities for $\beta\alpha e$. The proof for (ii) is similar.
\end{proof}

\begin{algorithm}
\caption{\textsc{ProperlyInfinite}$(V)$}
\begin{algorithmic}[1]
\REQUIRE $V$ finite subset of $E^0$
\STATE $R\leftarrow \emptyset$
\WHILE{$V\not=\emptyset$}
\STATE $(V,R)\leftarrow$\textsc{FindFQ}$(V,R)$
\IF{$(V,R)=(\star,\star)$}
\RETURN $1$
\ENDIF
\ENDWHILE
\RETURN $\infty$
\end{algorithmic}
\end{algorithm}

\begin{proposition}\label{Prop:flowchart}
Let $E=(E^0,E^1,s,r)$ be a graph, 
and $\mys$ be a subset of $E^0\reg$. 
Take a finite set $V \subseteq E^0$. 
\begin{enumerate}[(i)]
\item If \textsc{ProperlyInfinite}$(V)$ terminates with $\infty$, then $p_V$ is properly
  infinite
\item If \textsc{ProperlyInfinite}$(V)$ terminates with 1, then $V$
  has property (FQ) relative to $\mys$
\item If \textsc{ProperlyInfinite}$(V)$ does not terminate, then $V$
  has property (FQ) relative to $\mys$.
\end{enumerate}
When $E^0$ is finite, the procedure always terminates.
\end{proposition}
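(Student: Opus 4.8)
The plan is to match the three items to the three possible fates of the loop in \textsc{ProperlyInfinite}, recording a run of the algorithm as $(V_0,R_0)=(V,\emptyset)$ and $(V_{k+1},R_{k+1})=\text{\textsc{FindFQ}}(V_k,R_k)$, with $W_k=\text{\textsc{GetTop}}(V_k\setminus s(R_kE^*))$ the set formed in line 2, exactly as in the discussion preceding Lemma \ref{goldengoal}. One checks by induction that $R_k\subseteq E^0_2$ at every stage, so that both Lemma \ref{findFQi} and Lemma \ref{goldengoal} are available at each step that does not return $(\star,\star)$. (The degenerate case $V=\emptyset$ is trivial, since $p_\emptyset=0$ is properly infinite by the remarks after Definition \ref{pidef}.)

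For (i), termination with $\infty$ means the loop stops because $V_{m+1}=\emptyset$ for some $m$, with no earlier call having returned $(\star,\star)$. Then $p_{V_{m+1}\cup R_{m+1}}=p_{R_{m+1}}$, and as $R_{m+1}\subseteq E^0_2$ this is a finite orthogonal sum of properly infinite vertex projections by Lemma \ref{infibasics}(i), hence itself properly infinite (see the remarks after Definition \ref{pidef}). I would then apply Lemma \ref{findFQi} backwards: for $k=m,\dots,0$, proper infiniteness of $p_{V_{k+1}\cup R_{k+1}}$ yields that of $p_{V_k\cup R_k}$, each step being legitimate since no call returned $(\star,\star)$, so that finally $p_{V_0\cup R_0}=p_V$ is properly infinite.

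For (ii), termination with $1$ means $\text{\textsc{FindFQ}}(V_m,R_m)=(\star,\star)$ for some $m$; that is, the test in line 2 exhibits a vertex $w\in W_m\cap[E^0_1\cup(E^0_0\setminus\mys)]$. If $w\in E^0_1$ then Lemma \ref{goldengoal}(ii) shows $V$ has (TQ) for $w$, while if $w\in E^0_0\setminus\mys$ then Lemma \ref{goldengoal}(i) shows $V$ has (MQ) for the vertex $w\notin\mys$. In either case $V$ has (FQ) relative to $\mys$.

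For (iii) and the final termination claim, non-termination forces every $V_k\neq\emptyset$ and no step $(\star,\star)$, whence $W_k\subseteq E^0_2\cup(E^0_0\cap\mys)$ and each level $L_k:=W_k\cap E^0_0\cap\mys$ is finite; moreover $L_k\neq\emptyset$, for $V_{k+1}=s((W_k\setminus E^0_2)E^1)\neq\emptyset$ forces $L_k=W_k\setminus E^0_2$ to emit an edge. Since each $u\in L_{k+1}\subseteq V_{k+1}$ is the source of an edge whose range lies in $L_k$, there is a parent map $L_{k+1}\to L_k$, and K\"onig's lemma applied to this inverse system of nonempty finite sets produces $w_k\in L_k$ together with an edge from $w_{k+1}$ to $w_k$, so that $w_kE^\dagger w_{k+1}\neq\emptyset$. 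As $w_k\in W_k\cap E^0_0$, Lemma \ref{goldengoal}(i) gives that $V$ has (MQ) for each $w_k$, so $V$ has (AQ) for the sequence $\{w_k\}$ and therefore (FQ), proving (iii). For the last assertion, Remark \ref{FQremark} records that no $V$ can have (AQ) once $E^0$ is finite, so by (iii) non-termination cannot occur and the procedure always terminates. I expect the extraction of the (AQ)-sequence in (iii), that is, getting the K\"onig/inverse-limit argument and its indexing to mesh with the hypotheses of Lemma \ref{goldengoal}, to be the main obstacle, whereas (i) and (ii) follow almost directly from Lemmas \ref{findFQi} and \ref{goldengoal}.
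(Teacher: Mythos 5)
Your proposal is correct and follows essentially the same route as the paper: (i) via Lemma \ref{infibasics}(i) and backward application of Lemma \ref{findFQi}, (ii) via Lemma \ref{goldengoal}(i)--(ii), and (iii) by extracting an (AQ)-sequence from the non-terminating run. The only difference is cosmetic: you make the paper's ``diagonal argument'' explicit as a K\"onig's lemma argument on the parent maps $L_{k+1}\to L_k$, and you deduce termination for finite $E^0$ from the impossibility of (AQ) (Remark \ref{FQremark}) rather than from the disjointness of the sets $W_n\setminus E^0_2$ given by Lemma \ref{goldengoal}(iii).
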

\begin{proof}
Assume first that \textsc{ProperlyInfinite} runs indefinitely. Applying  Lemma \ref{goldengoal}(iii) also to $(V_i,R_i)$ for $i>0$ we see that all sets $W_n\setminus E^0_2$ produced are disjoint, and since they may not be empty (or the procedure would terminate) we conclude that $E^0$ must be infinite. Using  Lemma \ref{goldengoal}(i) with a diagonal argument we can produce the sequence $(w_n)$ required in the definition of condition (AQ).
 
 If the procedure terminates with $\infty$, then we have that $(\emptyset,R')=$\textsc{FindFQ}$^{\circ n}(V,\emptyset)$, and since $p_{R'}$ is properly infinite by  Lemma \ref{infibasics}(i), so is $p_V$ by Lemma \ref{findFQi}. If the procedure terminates with $1$, the procedure has identified a $w_m\in W_m$ in either $E^0_0\setminus \mys$ or $E^0_1$. Again by Lemma \ref{goldengoal}(i), we establish condition (FQ) relative to $\mys$. 
\end{proof}

\begin{theorem}\label{Thm:notPI}
The conditions (i)--(iii) of Theorem \ref{Thm:notPI:pre} are equivalent to
\begin{enumerate}[(i)]\addtocounter{enumi}{3}
\item $V$ does not have property (FQ)
\end{enumerate}
\end{theorem}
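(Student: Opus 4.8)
The plan is to show that condition (iv), ``$V$ does not have property (FQ)'', is equivalent to the conditions (i)--(iii) already established in Theorem \ref{Thm:notPI:pre}. Since those three are mutually equivalent, it suffices to close a single loop: I will prove that $V$ has property (FQ) if and only if $p_V$ fails to be properly infinite, i.e.\ that (the negation of (iv)) $\Longleftrightarrow$ (i). The forward direction is essentially already in hand, while the reverse direction will be supplied by the algorithmic analysis of \textsc{ProperlyInfinite}.

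First I would dispatch the direction ``(FQ) $\Longrightarrow$ $p_V$ not properly infinite.'' This is exactly the content of Proposition \ref{prop:stablyfinite}: if $V$ has (FQ) relative to $\mys$, then there is a gauge-invariant ideal $I$ so that the corner $q(C^*(E,\mys)/I)q$ is either $M_n(C(\T))$ or an AF algebra, where $q=p_V+I$. In either case the corner is stably finite and $q\neq 0$, so by Lemma \ref{passtocornerciteme} the image $q$ is nonzero and stably finite. This is precisely condition (ii) of Theorem \ref{Thm:notPI:pre}, and hence (i) holds: $p_V$ is not properly infinite. Thus ``not (iv)'' implies (i).

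For the converse, ``$p_V$ not properly infinite $\Longrightarrow$ $V$ has (FQ),'' I would run the algorithm \textsc{ProperlyInfinite}$(V)$ and appeal to Proposition \ref{Prop:flowchart}. By the contrapositive of Proposition \ref{Prop:flowchart}(i), if $p_V$ is not properly infinite then \textsc{ProperlyInfinite}$(V)$ does not terminate with $\infty$. The only remaining outcomes are that it terminates with $1$ or that it never terminates, and in both of these cases parts (ii) and (iii) of Proposition \ref{Prop:flowchart} assert directly that $V$ has property (FQ) relative to $\mys$. This closes the loop and establishes the equivalence of (iv) with (i)--(iii).

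The main obstacle in this argument is not in the final assembly, which is a short logical bookkeeping exercise, but in the machinery it rests upon: the correctness of \textsc{ProperlyInfinite} as certified by Proposition \ref{Prop:flowchart}, which in turn depends on Lemma \ref{findFQi} (the proper-infiniteness bookkeeping along one step of \textsc{FindFQ}) and Lemma \ref{goldengoal} (the extraction of the (MQ), (TQ), or (AQ) witnesses from the iterates $W_m$). Since those results are already available to me in the excerpt, the proof of Theorem \ref{Thm:notPI} itself is genuinely brief; I would simply cite Proposition \ref{prop:stablyfinite} for one implication and Proposition \ref{Prop:flowchart} for the other, noting that the three outcomes of the algorithm exhaust all possibilities and that the non-$\infty$ outcomes both certify (FQ).
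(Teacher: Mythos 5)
Your proof is correct and follows essentially the paper's own argument: Proposition \ref{prop:stablyfinite} gives the passage from (FQ) to a quotient in which $p_V$ is nonzero and stably finite (condition (ii)/(iii) of Theorem \ref{Thm:notPI:pre}), and Proposition \ref{Prop:flowchart} supplies the converse by exhausting the three possible outcomes of \textsc{ProperlyInfinite}$(V)$. Note that, exactly as in the paper's own proof, what you actually establish is that (i)--(iii) are equivalent to ``$V$ \emph{has} property (FQ)''; the ``does not'' in the displayed condition (iv) is evidently a slip in the statement (compare Theorem \ref{main}(iv)--(v)), and your reading of it is the intended one.
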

\begin{proof}
The implication (iv)$\Longrightarrow$(iii) 
is Proposition~\ref{prop:stablyfinite}, and  
 by Proposition~\ref{Prop:flowchart}, 
we get (i)$\Longrightarrow$(vi). Indeed, if $p_V$ is not properly infinite, then \textsc{ProperlyInfinite} must terminate with 1 or run indefinitely, in which case it must have property (FQ) relative to $\mys$.
\end{proof}

\begin{example}\label{sumexxs}
Consider the graph 
\[\rule{0mm}{1cm}
\xymatrix{{\bullet}\ar@(u,r)[]\ar@(u,l)[]&{\bullet}\ar@(u,r)[]\ar@(u,l)[]\\{\circ}\ar[u]^(0.1){w_0}^(0.9){v_0}&{\circ}\ar[u]_(0.1){w_1}_(0.9){v_1}}
\]
with vertices $\{v_0,w_0,v_1,w_1\}$ as indicated. We see that $p_V$ fails to have property (FQ) precisely when 
\[
w_i\in V\Longrightarrow v_i\in V.
\]
Our theorem thus shows that except for the case ruled out just after Definition \ref{pidef}, all possibilities of proper infiniteness of a sum
relative to proper infiniteness of the summands occur in this
$C^*$-algebra. Indeed
\begin{center}
\begin{tabular}{|c|c|c||l|}\hline
$p$&$q$&$p+q$&Example\\\hline \hline
$\lnot$PI&$\lnot$PI&$\lnot$PI&$p=p_{\{w_0\}},q=p_{\{w_1\}}$\\\hline
$\lnot$PI&$\lnot$PI&PI&$p=p_{\{v_0,w_1\}},q=p_{\{v_1,w_0\}}$\\\hline
$\lnot$PI&PI&$\lnot$PI&$p=p_{\{w_0\}}, q=p_{\{v_1,w_1\}}$\\\hline
$\lnot$PI&PI&PI&$p=p_{\{w_0\}}, q=p_{\{v_0,v_1,w_1\}}$\\\hline
\end{tabular}
\end{center}
\end{example}

\section{Semiprojectivity of unital graph  $C^*$-algebras}

\subsection{Sufficiency}

In this section we provide a sufficient criterion for semiprojectivity of unital graph algebras, which  we in the ensuing section will see is also necessary. Precisely, we will prove

\begin{theorem}\label{suf}
Let $E=(E^0,E^1,s,r)$ be a graph such that $E^0$ is finite. 
If, for each $v\in E^0$, the set
\[
\Omega_v=\{w\in E^0\mid |vE^1w|=\infty\}
\]
does not have (FQ) relative to $\mys$, then the relative graph algebra $C^*(E,\mys)$ is semiprojective.
\end{theorem}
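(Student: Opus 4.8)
The strategy is to realize the abstract semiprojectivity of $C^*(E,\mys)$ by reducing, via the weak relative lifting property of Lemmas~\ref{weakone} and~\ref{weaktwo}, to a ``core'' subalgebra which is already known to be semiprojective. The hypothesis that each $\Omega_v$ fails to have (FQ) is the input we must convert into proper infiniteness of the relevant projections: by Theorem~\ref{Thm:notPI} (equivalence (i)$\Leftrightarrow$(iv)), failure of (FQ) for $\Omega_v$ relative to $\mys$ says exactly that $p_{\Omega_v}$ is properly infinite in $C^*(E,\mys)$. This is the bridge between the combinatorial condition and the $C^*$-algebraic structure we can exploit.

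First I would dispose of the easy structural features of the graph. The only vertices $v$ at which $\Omega_v$ is nonempty are infinite receivers, so the difficulty is concentrated at singular vertices; at regular vertices the \CK{3} relation makes the local structure finite, and such vertices contribute only a finitely generated, and hence semiprojective, piece. The point of the hypothesis is precisely to control what happens at an infinite receiver $v$: the projection $p_v$ absorbs infinitely many mutually orthogonal subprojections $s_es_e^*$ with $e\in vE^1$, and these accumulate on the vertices $w\in \Omega_v$. Proper infiniteness of $p_{\Omega_v}$ should let us ``swallow'' the infinite tail of edges into a properly infinite corner, so that only finitely many of the defining relations at $v$ need to be lifted at any finite stage.

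The core of the argument is then to build an increasing chain of subalgebras $A_0\subseteq A_1\subseteq\cdots$ exhausting $C^*(E,\mys)$, where $A_0$ is a genuinely semiprojective building block (for instance a relative graph algebra of a finite subgraph, whose semiprojectivity follows from the classical Szyma\'nski--Spielberg results together with Lemma~\ref{subgraph}), and where each inclusion $A_k\subseteq A_{k+1}$ has the weak relative lifting property. The verification of weak relative lifting at each step is where proper infiniteness of $p_{\Omega_v}$ enters: given a lift of $A_k$ and a map of $A_{k+1}$ into the quotient, one must produce the finitely many extra partial isometries $s_e$ (for the edges added at stage $k+1$) together with the approximate \CK{3} relation at the infinite receivers. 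The infinite sum $\sum_{e\in vE^1}s_es_e^*=p_v$ cannot be lifted term by term, but because $p_{\Omega_v}$ is properly infinite one can find, in the lifted algebra, a single properly infinite projection dominating the missing tail, and use the standard fact that isometries into a properly infinite projection lift to carry the infinitely many edges at once. Composing all these inclusions by Lemma~\ref{weakone} gives weak relative lifting for $A_0\subseteq C^*(E,\mys)$, and Lemma~\ref{weaktwo} then upgrades semiprojectivity of $A_0$ to semiprojectivity of $C^*(E,\mys)$.

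\textbf{Main obstacle.} The hard part will be the explicit construction of the lifts at an infinite receiver, i.e.\ showing that the inclusion capturing the transition from finitely many to infinitely many edges at $v$ has the weak relative lifting property. One must simultaneously lift the partial isometries for the finitely many new edges, preserve the partial \CK{3} relations, and accommodate the infinite edge-set using only that $p_{\Omega_v}$ is properly infinite; keeping these liftings compatible across the different infinite receivers, and ensuring the chain $\{A_k\}$ really exhausts $C^*(E,\mys)$ while each link is a genuine inclusion of the required form, is where the technical weight of the proof lies.
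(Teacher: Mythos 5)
Your proposal follows essentially the same route as the paper: reduce to a finite-graph core $C^*(F_0,\emptyset)$ that is semiprojective by Szyma\'nski's result (Proposition \ref{fissp}) and Proposition \ref{qissp}, then add the infinite edge bundles at each infinite receiver one vertex at a time, verifying the weak relative lifting property for each inclusion by converting the non-(FQ) hypothesis into proper infiniteness of $p_{\Omega_v}$ via Theorem \ref{Thm:notPI} and using it to lift the infinitely many partial isometries, finally assembling everything with Lemmas \ref{weakone} and \ref{weaktwo}. The ``main obstacle'' you flag is precisely the content of Propositions \ref{XX} and \ref{Prop:Key1} in the paper, which carry out the inductive lifting of partial isometries into a properly infinite corner.
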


We base our work on the fundamental observation by Szyma\'nski

\begin{proposition}\label{fissp}
  When $E$ is a finite graph (i.e., $|E^0|,|E^1|<\infty$) the
  $C^*$-algebra $\toep{E}$ is semiprojective.
\end{proposition}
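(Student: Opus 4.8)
The plan is to exploit that $\toep{E}=C^*(E,\emptyset)$ is presented by finitely many generators---the projections $\{p_v\}_{v\in E^0}$ and partial isometries $\{s_e\}_{e\in E^1}$, finite in number since $E$ is finite---subject only to the relations \CK{1} and \CK{2}, with no instance of the awkward \CK{3} to contend with. All of the surviving relations are of a ``weakly stable'' type that can be lifted up to a finite stage. Concretely, suppose we are given nested ideals $J_1\lhd J_2\lhd\cdots$ in some $D$, write $J=\overline{\cup J_k}$ and let $\pi\colon D\to D/J$, $q_k\colon D\to D/J_k$, and $\pi_k\colon D/J_k\to D/J$ be the quotient maps, and let $\phi\colon\toep{E}\to D/J$ be a \shom. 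Since $J=\overline{\cup J_k}$, for every $x\in D$ we have $\|q_k(x)\|\to\|\pi(x)\|$ as $k\to\infty$, so any finite collection of approximate relations holding in $D/J$ eventually holds, to the same tolerance, in $D/J_k$; this is the mechanism that converts approximate lifts into exact ones.

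First I would lift the projections. Choose self-adjoint $d_v\in D$ with $\pi(d_v)=\phi(p_v)$. For $k$ large, $q_k(d_v)$ is close to self-adjoint with spectrum near $\{0,1\}$, and the mutual products $q_k(d_v)q_k(d_w)$ (for $v\neq w$) are small; applying functional calculus followed by a standard orthogonalization of a finite almost-orthogonal family yields mutually orthogonal projections $P_v\in D/J_k$ with $\pi_k(P_v)=\phi(p_v)$. Exactness of the lift follows because functional calculus commutes with $\pi_k$ and fixes the already-idempotent, already-orthogonal images $\phi(p_v)$.

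Next I would polish the partial isometries. Lift each $\phi(s_e)$ to some $T_e\in D/J_k$; for $k$ large we have $T_e^*T_e\approx P_{s(e)}$ and $T_eT_e^*\lessapprox P_{r(e)}$. Cutting down with the projections and using the invertibility of $P_{s(e)}T_e^*T_eP_{s(e)}$ in the corner $P_{s(e)}(D/J_k)P_{s(e)}$, I would pass to the partial-isometry part of a polar decomposition to obtain $S_e$ with $S_e^*S_e=P_{s(e)}$ exactly; replacing $S_e$ by $P_{r(e)}S_e$ and re-polishing---a procedure that converges for $k$ large since the perturbations are small---arranges $S_eS_e^*\le P_{r(e)}$ while keeping $\pi_k(S_e)=\phi(s_e)$. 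With \CK{1} and \CK{2} now holding exactly for $\{P_v\},\{S_e\}$, the universal property of $\toep{E}=C^*(E,\emptyset)$ produces a \shom $\psi\colon\toep{E}\to D/J_k$ with $\psi(p_v)=P_v$ and $\psi(s_e)=S_e$, whence $\pi_k\circ\psi=\phi$ on generators and therefore everywhere.

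The main obstacle is the partial-isometry step: one must simultaneously achieve the exact initial-space relation \CK{1} and the exact domination \CK{2} of the final space, all while preserving that $S_e$ lifts $\phi(s_e)$. The absence of \CK{3} is precisely what makes this manageable---there is no constraint forcing the final projections $S_eS_e^*$ to sum exactly to a vertex projection, so each $S_e$ may be corrected independently, and the inequality \CK{2} tolerates the small final-space errors that the correction leaves behind.
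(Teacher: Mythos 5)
The paper offers no proof of this proposition at all --- it is quoted as Szyma\'nski's observation (a refinement of Blackadar's argument that $\mathcal O_\infty$ is semiprojective) --- so there is nothing in-paper to compare line by line. Your strategy is the standard one for exactly this kind of statement: finitely many generators, lift the mutually orthogonal vertex projections by functional calculus and orthogonalization at a finite stage, then polish lifts of the $s_e$ by polar decomposition in the corners. Those steps, as you describe them, are carried out correctly, including the key point that the polishing operations fix the (already exact) images under $\pi_k$.

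There is, however, one genuine gap, located precisely at your closing claim that ``each $S_e$ may be corrected independently.'' The defining relations of $\toep{E}$ are not only \CK{1} and \CK{2} as literally displayed: for distinct edges $e\neq f$ with $r(e)=r(f)=v$ one must also have $s_e^*s_f=0$, i.e.\ the range projections $s_es_e^*$ are mutually orthogonal (equivalently, $\sum_{e\in X}s_es_e^*\leq p_v$ for every finite $X\subseteq vE^1$). The paper uses this constantly --- it is what makes $p_v-\sum_{e\in X}s_es_e^*$ a projection, and it is the content of the ``easy fact'' recorded just before Proposition \ref{XX} --- and without it the universal $C^*$-algebra on \CK{1}--\CK{2} alone is strictly larger than $\toep{E}$ (nothing prevents $s_e=s_f$ for parallel edges). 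Your independently polished $S_e$ satisfy $S_e^*S_e=P_{s(e)}$ and $S_eS_e^*\leq P_{r(e)}$ but you have no control on $S_e^*S_f$, so the universal property of $\toep{E}$ cannot yet be invoked. The repair is standard but must be stated: order the finitely many edges received by each vertex $v$ and polish them successively, at stage $j$ cutting the lift of $s_{e_j}$ down by the residual projection $P_v-\sum_{i<j}S_{e_i}S_{e_i}^*$ (which still lifts $\phi\bigl(p_v-\sum_{i<j}s_{e_i}s_{e_i}^*\bigr)$ and hence still almost dominates the target range projection) before taking the polar part. This successive, rather than independent, correction is exactly the inductive mechanism of Proposition \ref{XX} and of Blackadar's $\mathcal O_\infty$ proof; with that amendment your argument is complete.
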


With this, we easily get the result (essentially proved in \cite{ws:scdg})  that any relative graph algebra $C^*(E,\mys)$ with $E$ finite is
semiprojective by appealing to the following result:

\begin{proposition}\label{qissp}
If $|E^0|<\infty$ with $\mys\subseteq E^0\reg$, let $I\gidealof C^*(E,\mys)$ be a gauge invariant
ideal. If $C^*(E,\mys)$ is semiprojective, then so is $C^*(E,\mys)/I$.
\end{proposition}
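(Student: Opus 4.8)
The statement to prove is Proposition~\ref{qissp}: if $|E^0|<\infty$, $\mys\subseteq E^0\reg$, and $C^*(E,\mys)$ is semiprojective, then the quotient $C^*(E,\mys)/I$ by a gauge-invariant ideal $I$ is also semiprojective. The key structural input is the classification of gauge-invariant ideals by admissible pairs: by the theorem following Definition~\ref{defnap}, $I$ corresponds to an \nap\ $(H,\myr)$ with $\mys\subseteq\myr$, and there is a natural isomorphism
\[
C^*(E,\mys)/I\simeq C^*(E\setminus H,\myr\setminus H).
\]
So it suffices to show that this latter relative graph algebra is semiprojective. Since $E\setminus H$ is again a graph with finitely many vertices (as $|E^0|<\infty$), the natural strategy is to realize $C^*(E\setminus H,\myr\setminus H)$ as a retract of $C^*(E,\mys)$, i.e.\ to exhibit it both as a quotient (which we already have) and as a subalgebra via a splitting, so that semiprojectivity passes along.

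\textbf{First step: produce a splitting \shom.} The plan is to define a $*$-homomorphism $\sigma:C^*(E\setminus H,\myr\setminus H)\to C^*(E,\mys)$ that splits the quotient map $\pi:C^*(E,\mys)\to C^*(E,\mys)/I\simeq C^*(E\setminus H,\myr\setminus H)$. The natural candidate sends each generator $p_v$ (for $v\in(E\setminus H)^0=E^0\setminus H$) and $s_e$ (for $e\in(E\setminus H)^1$) back to the corresponding generators in $C^*(E,\mys)$. To see this is well-defined I would verify the relations \CK{1}--\CK{3} in $C^*(E,\mys)$ for these images: conditions \CK{1} and \CK{2} are inherited directly as in Lemma~\ref{subgraph}. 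The delicate point is \CK{3}: a vertex $v\in E^0\setminus H$ that is regular in $E\setminus H$ and lies in $\myr\setminus H$ must satisfy the Cuntz--Krieger relation, but in $E$ it may emit additional edges into $H$. Here I would use precisely part (i) of Definition~\ref{defnap}, which characterizes $H=\{v\in\myr\mid vE^1\subseteq E^1H\}$, together with the finiteness condition (ii), to control these extra edges. Injectivity of $\sigma$ should then follow from the gauge invariant uniqueness theorem (Theorem~\ref{giut}), checking that the images of the relevant projections $p_v$ and $p_v-\sum_{e\in X}s_es_e^*$ remain nonzero.

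\textbf{Second step: conclude via the retract.} Once $\sigma$ is an injective splitting with $\pi\circ\sigma=\id$, the quotient $C^*(E,\mys)/I$ is a retract of $C^*(E,\mys)$. Semiprojectivity is well known to pass to retracts: given a lifting problem for $C^*(E,\mys)/I$ into $D/\overline{\cup J_k}$, I would compose with $\pi$ to obtain a lifting problem for $C^*(E,\mys)$, apply semiprojectivity of $C^*(E,\mys)$ to get a partial lift to $D/J_k$, and then precompose with $\sigma$ to recover a partial lift for the quotient. This is a standard diagram chase that I would spell out briefly.

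\textbf{Main obstacle.} The principal difficulty is verifying that the naive splitting respects \CK{3} at regular vertices of $E\setminus H$. A vertex may be regular in the subgraph $E\setminus H$ while being singular (an infinite receiver) in $E$ if $|vE^1|=\infty$ but only finitely many edges survive in $E\setminus H$; conversely the Cuntz--Krieger sum in the quotient ranges only over surviving edges, so I must confirm the summed edges in $C^*(E,\mys)$ reproduce the correct projection. Rather than fighting this directly, the cleaner route — and the one I expect the authors take — is to avoid an exact splitting and instead invoke the weak relative lifting machinery of Lemma~\ref{weaktwo}: one shows the inclusion of $C^*(E,\mys)/I$ (realized suitably inside $C^*(E,\mys)$ or a related algebra) has the weak relative lifting property, so that semiprojectivity transfers even without $\overline{\varphi}|_{A_0}=\psi$. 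This circumvents the rigid \CK{3} bookkeeping by only requiring lifts up to the freedom permitted in the definition of weak relative lifting, and is the step I would expect to require the most care.
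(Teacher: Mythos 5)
Your proposal does not work, and the failure is at the very first step: the splitting $\sigma$ you want to construct does not exist in general. The extension $0\to I\to C^*(E,\mys)\to C^*(E,\mys)/I\to 0$ determined by an \nap $(H,\myr)$ is typically non-split. The simplest counterexample is already in the paper's setup: for the graph with one vertex and one loop and $\mys=\emptyset$, the pair $(\emptyset,\{v\})$ is an \nap giving the Toeplitz extension $0\to\bK\to\toep{E}\to C(\T)\to 0$, which famously admits no splitting (the unilateral shift has no unitary lift). Structurally, the obstruction is exactly the one you flagged and then hoped to circumvent: for $v\in\myr\setminus H$ the quotient imposes $p_v=\sum_{e\in vE^1\setminus E^1H}s_es_e^*$, but in $C^*(E,\mys)$ this sum is strictly smaller than $p_v$ whenever $v$ receives an edge from $H$ (and may even be a finite sum while $v$ is an infinite receiver in $E$, since $\myr$ can strictly contain $\mys$). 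Your fallback — invoking the weak relative lifting machinery of Lemma~\ref{weaktwo} — also cannot rescue the argument, because that lemma transfers semiprojectivity \emph{upward} from a subalgebra $A_0$ to a larger algebra $A$; here you need to pass \emph{downward} to a quotient, which is not a subalgebra precisely because no splitting exists.

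The paper's actual proof is much more elementary and avoids any embedding of the quotient. By \cite{tbdpirws:crg}, the gauge-invariant ideal $I$ is generated by finitely many vertex projections $p_1,\dots,p_k$ (finiteness because $|E^0|<\infty$). Given $\phi\colon C^*(E,\mys)/I\to D/\overline{\cup J_n}$, one applies semiprojectivity of $C^*(E,\mys)$ to $\phi\circ\pi$ to get a partial lift $\psi\colon C^*(E,\mys)\to D/J_n$. Each $\psi(p_j)$ is a projection whose image in $D/\overline{\cup J_n}$ vanishes, so its norm in $D/J_m$ tends to $0$ as $m\to\infty$; a projection of norm less than $1$ is zero, so after increasing $n$ one gets $\psi(p_j)=0$ for all $j$, whence $\psi$ kills $I$ and descends to the desired partial lift of $\phi$. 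You should replace your retract strategy with this direct lifting argument.
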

\begin{proof}
As seen in \cite{tbdpirws:crg} $I$ is generated by a set of vertex projections, which in this case must be 
finite, say, $p_1,\dots,p_k$.  With a map
$\phi:C^*(E,\mys)/I\to D/\overline{\cup J_k}$ given, we may partially lift
$\phi\circ\pi$ as indicated below
\[
\xymatrix{
{0}\ar[r]&{I}\ar[r]^-{\iota}&{C^*(E,\mys)}\ar[r]^-{\pi}\ar[d]_-{\psi}&{C^*(E,\mys)/I}\ar[r]\ar[d]_-{\phi}&0\\
&&{D/J_k}\ar[r]_{\kappa_k}&{D/\overline{\bigcup J_k}}}
\]
Since $\kappa_n\circ \psi\circ \iota (p_j)=0$ we may assume, by
increasing $n$ if necessary, that in fact $ \psi\circ \iota
(p_j)=0$. This implies that $\psi$ descends to $C^*(E,\mys)/I$, as desired.
\end{proof}

We frequently use, without mentioning, 
an easy fact that for a projection $q$ 
and two partial isometries $s_1, s_2$, 
the inequality $s_1s_1^*+s_2s_2^* \leq q$ 
is equivalent to the three equalities 
$qs_1=s_1$, $qs_2=s_2$ and $s_1^*s_2=0$. 

 The following result is a further refinement of a key result in Spielberg's refinement of Blackadar's proof that $\mathcal O_\infty$ is semiprojective.

\begin{proposition}\label{XX}
Let $B,D$ be \CA s, 
and $\pi\colon D \to B$ be a surjective \shoM . 
Let $p,q \in B$ be projections, 
and $u_0,u_1,u_2\in B$ be partial isometries satisfying
\[
u_0^*u_0=u_1^*u_1=u_2^*u_2=p,\qquad 
u_0u_0^*+u_1u_1^*+u_2u_2^* \leq q
\]
Let $P,Q \in D$ be projections with $\pi(P)=p$ and $\pi(Q)=q$. 
Let $U_1 \in D$ be a partial isometry 
with $\pi(U_1)=u_1$, $U_1^*U_1=P$ and $U_1U_1^*\leq Q$. 

Suppose that $P$ is properly infinite. 
Then there exist partial isometries $U_0,U_2 \in D$ 
satisfying $\pi(U_0)=u_0$, $\pi(U_2)=u_2$, and 
\[
U_0^*U_0=U_2^*U_2=P,\qquad 
U_0U_0^*+U_2U_2^* \leq Q.
\]
\end{proposition}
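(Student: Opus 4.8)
The plan is to extract the purely algebraic consequences of the hypotheses, then lift $u_0$ and $u_2$ as partial isometries with the correct \emph{source} projection $P$, and finally to use proper infiniteness of $P$ to drag their range projections into $Q$ while making them orthogonal. Only the last step is substantial.

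First I would record three routine facts. Since $u_0u_0^*+u_1u_1^*+u_2u_2^*\le q$ is a sum of projections dominated by a projection, the summands are pairwise orthogonal; write $e_0=u_0u_0^*$ and $e_2=u_2u_2^*$, so $e_0\perp e_2$ and $e_0,e_2\le q$. Because $\pi$ is a $*$-homomorphism it carries the data witnessing proper infiniteness of $P$ (Definition \ref{pidef}) to such data for $\pi(P)=p$, so $p$ is properly infinite in $B$; as $e_0\sim p\sim e_2$ via $u_0,u_2$, both $e_j$ are properly infinite. Finally $U_1U_1^*\le Q$ satisfies $U_1U_1^*\sim U_1^*U_1=P$, so it is a properly infinite projection sitting inside $Q$ and lifting $u_1u_1^*$. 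This is the single point at which the given partial lift $U_1$ enters: it certifies that $Q$ dominates a properly infinite projection equivalent to $P$, and hence (absorbing copies of itself, $2[U_1U_1^*]\le[U_1U_1^*]$) that $Q$ has enough orthogonal room for the copies we must build.

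Next, using the standard lemma that a partial isometry lifts along a surjection once its source projection has been lifted, and that $P$ lifts $p=u_j^*u_j$, I would choose $V_0,V_2\in D$ with $\pi(V_j)=u_j$ and $V_j^*V_j=P$. Then $G_j:=V_jV_j^*$ lifts $e_j$ and satisfies $G_j\sim P$, so each $G_j$ is properly infinite; but $G_0,G_2$ are in general neither mutually orthogonal nor contained in $Q$. The crux is then to produce orthogonal projections $\tilde F_0,\tilde F_2\le Q$ and partial isometries $W_j$ with $W_j^*W_j=G_j$, $W_jW_j^*=\tilde F_j$ and $\pi(W_j)=e_j$. Granting this, I set $U_j=W_jV_j$: then $U_j^*U_j=V_j^*G_jV_j=P$, $\pi(U_j)=e_ju_j=u_j$, and $U_jU_j^*=\tilde F_j$, so $U_0U_0^*+U_2U_2^*=\tilde F_0+\tilde F_2\le Q$, which is exactly the conclusion.

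I expect the construction of the $\tilde F_j$ and $W_j$ to be the entire difficulty. We are asking to lift the properly infinite projections $e_0,e_2\le q=\pi(Q)$ to orthogonal properly infinite projections \emph{below the prescribed lift} $Q$, compatibly with $G_0,G_2$. For finite projections the analogous statement fails (this is precisely the phenomenon behind R\o rdam's examples, cf.\ \cite{mr:sfp}), so proper infiniteness is indispensable here: it is what removes the $K_0$-type obstruction. Concretely, I would place $\tilde F_0\le Q$ lifting $e_0$ using the lifting theory of properly infinite projections and the room furnished by $U_1U_1^*$, then place $\tilde F_2$ inside $Q-\tilde F_0$ (possible since $\pi(Q-\tilde F_0)=q-e_0\ge e_2$ and a properly infinite seed survives there, as $e_0\perp u_1u_1^*$), and finally invoke that two properly infinite projections lifting the same projection are equivalent through a partial isometry lifting that projection to obtain $W_j$ connecting $G_j$ to $\tilde F_j$. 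The earlier reductions are mechanical; it is this range-correction step, powered by the proper infiniteness of $P$ and the properly infinite projection $U_1U_1^*\le Q$, that carries the proof.
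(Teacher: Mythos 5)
Your reduction of the problem to producing the projections $\tilde F_j$ and partial isometries $W_j$ is formally correct, but both lifting facts you invoke to carry it out are false, and the second is precisely the difficulty this proposition exists to close. First, there is no ``standard lemma'' producing $V_j\in D$ with $\pi(V_j)=u_j$ and $V_j^*V_j=P$: a partial isometry need not lift to a partial isometry at all, let alone with prescribed source projection. For instance $z\in C(\T)$ is a unitary, hence a partial isometry with $z^*z=1$, but under the restriction map $C(\overline{\mathbb{D}})\to C(\T)$ it admits no partial isometry lift whatsoever: a partial isometry in the commutative algebra $C(\overline{\mathbb{D}})$ has pointwise modulus $0$ or $1$, hence by connectedness is zero or unimodular, and a unimodular lift of $z$ would be a retraction of the disk onto its boundary. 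Second, in your ``crux'' you appeal to the claim that two properly infinite projections lifting the same projection are equivalent via a partial isometry lifting that projection; this already fails for $\mathcal O_\infty\oplus\mathcal O_\infty\to\mathcal O_\infty$ with $G=(1,g)$, $F=(1,f)$ and $[g]\neq[f]$ in $K_0(\mathcal O_\infty)$. You correctly flag the range-correction step as ``the entire difficulty,'' but ``the lifting theory of properly infinite projections'' and ``the room furnished by $U_1U_1^*$'' are not arguments, so the proposal assumes rather than proves the two statements that would replace the proposition. The earlier reductions are therefore not mechanical: the very first one is where the argument breaks.

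The paper's proof sidesteps both problems by never lifting $u_i$ to an exact partial isometry. One takes rough lifts $T_0,T_2$ of $u_0,u_2$ satisfying only the soft conditions $T_iP=T_i$, $T_iT_i^*\le Q-U_1U_1^*$ and $T_0^*T_2=0$, which are achievable by elementary functional calculus on contractive lifts; the source defect $P-T_i^*T_i$ is then a positive element of $\ker\pi$. Proper infiniteness of $P$ gives $V_0,V_2$ with $V_i^*V_i=P$ and $V_0V_0^*+V_2V_2^*\le P$, and composing with $U_1$ plants two orthogonal isometric copies of $P$ under $U_1U_1^*\le Q$, orthogonal to the ranges of the $T_i$. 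Setting $U_i=T_i+U_1V_i\sqrt{P-T_i^*T_i}$ one gets $\pi(U_i)=u_i$ (the correction lies in $\ker\pi$), $U_i^*U_i=T_i^*T_i+(P-T_i^*T_i)=P$ because the cross terms vanish ($T_i^*U_1=0$), mutually orthogonal ranges under $Q$, and the proof is complete. This ``absorb the source defect into the infinite room under $U_1U_1^*$'' device is the actual content of the proposition, and it must be deployed at the source-lifting stage, not deferred to a later range correction.
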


\begin{proof}
By elementary functional calculus, 
one can find $T_0,T_2 \in D$ such that 
$\pi(T_i)=u_i$ and 
\[
T_iP=T_i,\quad T_iT_i^* \leq Q - U_1U_1^*
\]
for $i=0,2$ and $T_0^*T_2=0$. 
Since $P$ is a properly infinite projection, 
there exist $V_0,V_2 \in D$ satisfying
\[
V_0^*V_0=V_2^*V_2=P,\quad 
V_0V_0^*+V_2V_2^*\leq P. 
\] 
Set 
\[
U_i = T_i+U_1V_i\sqrt{P-T_i^*T_i}
\]
for $i=0,2$. 
Since 
\[
T_0^*T_2=T_0^*U_1=T_2^*U_1=0
\]
and $V_0^*U_1^*U_1V_2=V_0^*PV_2=0$, 
we have $U_0^*U_2=0$. 
For $i=0,2$, we have $\pi(U_i)=u_i$ 
because $\pi(P-T_i^*T_i)=p-u_i^*u_i=0$. 
Since $T_i^*U_1=0$, 
we have
\[
U_i^*U_i
=T_i^*T_i+
\sqrt{P-T_i^*T_i}V_i^*U_1^*U_1V_i\sqrt{P-T_i^*T_i}\\
=P
\]
for $i=0,2$. 
Finally, it is easy to see $QU_i=U_i$ for $i=0,2$. 
\end{proof}

\begin{proposition}\label{Prop:Key1}
Let $E$ be a graph, $F$ a subgraph, and $S\subseteq E^0\reg$.
Suppose that 
a finite subset $V \subseteq F^0 \subseteq E^0$ does not satisfy (FQ)
relative to $\mys$ in $F$
and a vertex $w \in F^0$ satisfies
that $|wE^1v|=\infty$, $|wF^1v|\geq 1$ for all $v \in V$. 
Let $G$ be the subgraph of $E$ 
obtained by adding the infinitely many edges $\cup_{v \in V}wE^1v$ to $F$. 

Then $C^*(F,\mys_F) \subseteq C^*(G,\mys_{G})$ 
has weak relative lifting property. 
\end{proposition}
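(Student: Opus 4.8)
The plan is to verify the weak relative lifting property directly from its definition. Fix \CA s $B,D$, a surjection $\pi\colon D\to B$, a \shom $\varphi\colon C^*(G,\mys_G)\to B$, and a \shom $\psi\colon C^*(F,\mys_F)\to D$ with $\pi\circ\psi=\varphi|_{C^*(F,\mys_F)}$; I must produce $\overline\varphi\colon C^*(G,\mys_G)\to D$ with $\pi\circ\overline\varphi=\varphi$. First I would record the structure of $G$: since the added edges all have range $w$ and source in $V$, we have $G^0=F^0$ and $\mys_G=\mys_F$, the point being that $w$ receives infinitely many edges, hence is singular, so $w\notin\mys$ and no new instance of \CK{3} is forced (in particular \CK{3} is imposed at $w$ in neither algebra, and the new edges, being in-edges of $w$, do not enter any instance of \CK{3} elsewhere). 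Thus $C^*(G,\mys_G)$ is generated by $C^*(F,\mys_F)$ together with the partial isometries $s_e$, $e\in\bigcup_{v\in V}wE^1v$, subject only to \CK{1} and \CK{2} at these edges. By the universal property it therefore suffices to set $\overline\varphi$ equal to $\psi$ on the generators of $C^*(F,\mys_F)$ except on the edges into $w$ emitted from $V$, and to supply fresh lifts $S_e\in D$ of $\varphi(s_e)$ with $S_e^*S_e=\psi(p_{s(e)})$ and the $S_eS_e^*$ mutually orthogonal below $\psi(p_w)$. Here it is essential that the property is only \emph{weak}: Proposition~\ref{XX} reuses the range of its seed, so the original lifts of the $F$-edges into $w$ cannot be retained, and the freedom to redefine $\overline\varphi$ on them is exactly what makes this possible.

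The engine is proper infiniteness. Since $V$ does not have (FQ) relative to $\mys$ in $F$, Theorem~\ref{Thm:notPI} shows $p_V$ is properly infinite in $C^*(F,\mys_F)$, hence in $C^*(G,\mys_G)$, so that $P:=\psi(p_V)$ is properly infinite in $D$ and $p:=\varphi(p_V)$ is properly infinite in $B$. Because $V$ is finite while each $wE^1v$ is infinite, I would bundle the edges into $w$ from $V$: enumerating $wE^1v=\{e_{v,1},e_{v,2},\dots\}$ and setting $s_{B_n}=\sum_{v\in V}s_{e_{v,n}}$, each composite $s_{B_n}$ has source $p_V$, the ranges $s_{B_n}s_{B_n}^*$ are orthogonal and dominated by $p_w$, and the individual edges are recovered as $s_{e_{v,n}}=s_{B_n}p_v$. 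It thus suffices to lift the family $\{\varphi(s_{B_n})\}_{n\ge1}$ to an orthogonal family in $D$ with common source $P$, after which $S_{e_{v,n}}:=S_{B_n}\psi(p_v)$ gives the required individual lifts. A seed is furnished precisely by the hypothesis $|wF^1v|\ge1$: choosing $f_v\in wF^1v$ and forming $U_1:=\sum_{v\in V}\psi(s_{f_v})$ yields a partial isometry in $D$ with $U_1^*U_1=P$, $U_1U_1^*\le\psi(p_w)$, and $\pi(U_1)$ a partial isometry in $B$ of source $p$.

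With $Q:=\psi(p_w)$ I would then lift $\{\varphi(s_{B_n})\}$ one bundle at a time by iterating Proposition~\ref{XX} for the properly infinite projection $P$. At the $n$-th stage one works inside the corner cut down by $Q$ minus the ranges already fixed, feeds the current seed (a lift of an auxiliary source-$p$ partial isometry whose range lies in that corner) together with the target $\varphi(s_{B_n})$ and a freshly chosen auxiliary element into Proposition~\ref{XX}, and extracts a lift $S_{B_n}$ of $\varphi(s_{B_n})$ orthogonal to everything previously fixed, along with a new seed for the next stage. Disassembling $S_{e_{v,n}}=S_{B_n}\psi(p_v)$ and collecting these with the unchanged values of $\psi$ defines $\overline\varphi$, and one checks that \CK{1}--\CK{2} and $\pi\circ\overline\varphi=\varphi$ hold by construction.

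The main obstacle is the bookkeeping of this iteration, specifically guaranteeing a perpetual supply of orthogonal ``room'' inside $Q$: at each step one must place both the kept range $S_{B_n}S_{B_n}^*$ and a new auxiliary seed orthogonally to all ranges fixed so far. This is exactly where the proper infiniteness of $p$ is indispensable: it furnishes, via the absorbing relation $[a]+[p]=[p]$ for $[a]\le[p]$ together with $[p_V]\le[p_w]$ (one edge per $v$ embeds $p_V$ into $p_w$), infinitely many mutually orthogonal copies of $p$ below $q$, while the singular vertex $w$ contributes the defect space $p_w-\sum_e s_es_e^*$ that keeps the supply from being exhausted. Verifying that Proposition~\ref{XX} applies in each cut-down corner, and that the disassembled partial isometries carry the correct images under $\pi$, is then routine.
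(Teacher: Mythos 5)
Your proposal is correct and follows essentially the same route as the paper: reduce to lifting the new edges into $w$ with source $\psi(p_V)$ and mutually orthogonal ranges under $\psi(p_w)$, seed the process with the existing $F$-edges, and iterate Proposition \ref{XX} using proper infiniteness of $\psi(p_V)$ (obtained from the failure of (FQ) via Theorem \ref{Thm:notPI}), with each seed being consumed --- which is exactly why only the \emph{weak} lifting property results. Your explicit bundling $s_{B_n}=\sum_{v\in V}s_{e_{v,n}}$, which makes the common source equal to $p_V$ so that Proposition \ref{XX} applies verbatim when $|V|>1$, is a welcome clarification of a point the paper's write-up leaves implicit.
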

\begin{figure}
\begin{center}
\includegraphics[width=5cm]{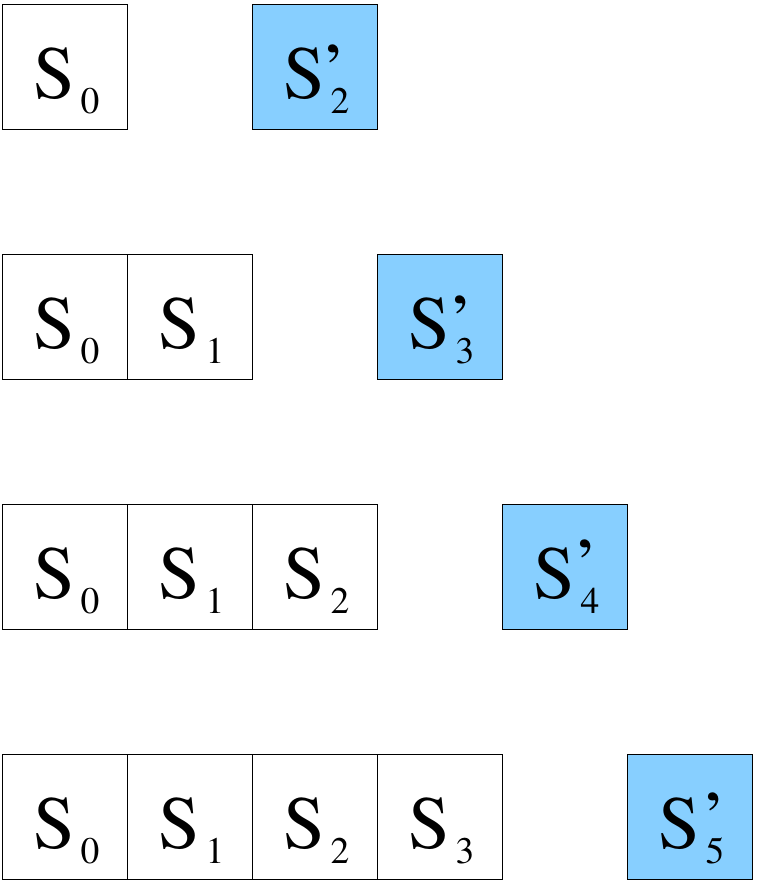}
\end{center}
\caption{Proof idea}
\end{figure}
\begin{proof}
Take \CA s $B,D$, a surjective \shom $\pi\colon D \to B$, 
a \shom $\varphi\colon C^*(G,\mys_G) \to B$ 
and a \shom $\psi\colon C^*(F, S_F) \to D$ 
such that $\pi \circ \psi = \varphi|_{C^*(F,\mys_F)}$. 
For each $v \in V$, 
enumerate $wE^1v=\{e_{v,n}\}_{n=0}^\infty$ 
such that $e_{v,1} \in F^1$. 
In order to construct a \shom $\overline{\varphi}\colon C^*(F',\mys_{F'}) \to D$ 
such that $\pi \circ \overline{\varphi} = \varphi$, 
we only need to find partial isometries\SEQ{Had a note about "adding just one edge"?}
$\{S_n\}_{n=0}^\infty$ in $D$ with orthogonal ranges 
satisfying 
\[
\pi(S_n)=\varphi(s_{e_n}),\quad 
S_n^*S_n=\psi(p_v),\quad 
S_nS_n^* \leq \psi(p_w)
\]
for all $n$. 
Set $S_1' = \psi(s_{e_1}) \in D$. 
Since, by Theorem \ref{Thm:notPI}, $p_V$ is properly infinite, we may use Proposition \ref{XX} with $P=\psi(p_V)$, $Q=\psi(p_w)$, $U_1=S_1'$, 
$u_0=\varphi(s_{e_{0}})$ and $u_2=\varphi(s_{e_{2}})$, 
to get $U_0,U_2 \in D$ as in its conclusion. 
Set $S_0 = U_0$ and $S_2' = U_2$. 
Then $S_0, S_2' \in D$ satisfies 
$\pi(S_0)=\varphi(s_{e_0})$, $\pi(S_2')=\varphi(s_{e_2})$, and 
\[
S_0^*S_0={S'_2}^*S'_2=\psi(p_v),\qquad 
S_0S_0^*+S_2{S'_2}^* \leq \psi(p_w).
\]
By induction on $n$, 
we are going to construct 
$S_0,S_1,\ldots, S_{n-1},S_{n+1}' \in D$ satisfying 
$\pi(S_i)=\varphi(s_{e_i})$ for $i=0,1,\ldots, n-1$, 
$\pi(S_{n+1}')=\varphi(s_{e_{n+1}})$, and 
\[
S_i^*S_i=(S'_{n+1})^*S'_{n+1}=\psi(p_v),\qquad 
\sum_{i=0}^{n-1}S_iS_i^*+S_{n+1}'(S'_{n+1})^* \leq \psi(p_w).
\]
For $n=1$, we are done. 
Suppose the claim is true for $n$.
To prove it for $n+1$, 
we need to construct $S_{n}$ and $S_{n+2}'$ with the desired properties. 
For this, 
apply Proposition \ref{XX} 
for $P=\psi(p_v)$, $Q=\psi(p_w)-\sum_{i=0}^{n-1}S_iS_i^*$, 
$U_1=S_{n+1}'$, 
$u_0=\varphi(s_{e_{n}})$ and $u_2=\varphi(s_{e_{n+2}})$ 
to get $U_0,U_2 \in D$, 
and set $S_n = U_0$ and $S_{n+2}' = U_2$. 
It is routine to check that these work. 
Thus we get the desired family $\{S_n\}_{n=0}^\infty$ in $D$. 
\end{proof}

\begin{proofof}{Theorem \ref{suf}}
Enumerate $E^0=\{v_1,\dots,v_n\}$ and, for
each $w\in \Omega_{v}$, all  edges in $wE^1v$ as
\[
\{e_{w,v}^k\}_{k=0}^\infty
\]
We define subgraphs $F_0\subseteq F_1\subseteq \dots \subseteq F_n=E$
all with vertex set $E^0$ and with edge sets defined by
\begin{eqnarray*}
F_0^1&=&E^1\setminus \{e_{w,v}^k\mid w\in \Omega_{v}, k\geq 2\}\\
F_j^1&=&F_{j-1}^1\cup  v_jE^1, j\in\{1,\dots, n\}
\end{eqnarray*}
By Proposition \ref{fissp}, we have that $C^*(F_0,\emptyset)$ is
semiprojective, and hence, by Proposition \ref{qissp}, so is
$C^*(F_0,\mys_{F_0})$. Thus we may prove the result by appealing to Lemma
\ref{weakone} and \ref{weaktwo} as soon as we have established that
each inclusion $C^*(F_i,\mys_{F_i})\subseteq C^*(F_{i+1},\mys_{F_{i+1}})$
has weak relative lifting property. This follows from 
Proposition
\ref{Prop:Key1}. 
\end{proofof}

\subsection{Necessity}

We now prove that the condition established to be sufficient for semiprojectivity in the previous section is in fact necessary for weak semiprojectivity.

\begin{theorem}\label{nec}
Let $E=(E^0,E^1,s,r)$ be a graph such that $E^0$ is finite. 
If, for some $v\in E^0$, the set
\[
\Omega_v=\{w\in E^0\mid |wE^1v|=\infty\}
\]
has (FQ) relative to $\mys$, then the relative graph algebra $C^*(E,\mys)$ is not weakly semiprojective with respect to the class of unital graph $C^*$-algebras.
\end{theorem}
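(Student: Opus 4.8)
The plan is to refute weak semiprojectivity by producing a sequence of unital graph $C^*$-algebras $B_i$ together with a $*$-homomorphism $\phi\colon C^*(E,\mys)\to\prod_i B_i/\bigoplus_i B_i$ that admits no lift to $\prod_i B_i$. First I would observe that since $E^0$ is finite, Property (AQ) never occurs (Remark \ref{FQremark}), so (FQ) for $\Omega_v$ reduces to (TQ) for some $w$ or (MQ) for some $w\not\in\mys$. The feature I will exploit throughout is that $v$ is an infinite receiver and, fixing one $u\in\Omega_v$, the edges of $vE^1u=\{f_1,f_2,\dots\}$ yield projections $s_{f_j}s_{f_j}^*\le p_v$ that are mutually orthogonal and pairwise Murray--von Neumann equivalent via the $s_{f_j}s_{f_k}^*$.

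The next step is to isolate the obstruction as a gauge-invariant subquotient. Because $E^0$ is finite, every gauge-invariant ideal is generated by the finitely many vertex projections it contains, and the ideal attached to a hereditary $\myr$-saturated set $H$ is the corner $p_H C^*(E,\mys)p_H$ with $p_H=\sum_{x\in H}p_x$; hence a gauge-invariant subquotient $J/I$ is a corner of the gauge-invariant quotient $C^*(E,\mys)/I$. Running the construction of Proposition \ref{prop:stablyfinite} for $V=\Omega_v$, but retaining the infinite receiver $v$, I would produce gauge-invariant ideals $I\gidealof J$ so that $J/I$ is Morita equivalent to $\K^\sim$ in case (MQ) and to $(C(\T)\otimes\bK)^\sim$ in case (TQ): the finite (resp.\ single-cycle-with-no-entry) structure below $\Omega_v$ contributes the scalars $\C$ (resp.\ the circle $C(\T)$), the infinite bundle $\Omega_v\to v$ contributes the stabilization by $\bK$ (the $s_{f_j}s_{f_k}^*$ becoming a full system of matrix units), and the nonzero defect projection at the singular vertex $v$ supplies the unitization $(-)^\sim$. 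This is exactly the configuration appearing in condition (viii) of Theorem \ref{main}.

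I would then record that $\K^\sim$ and $(C(\T)\otimes\bK)^\sim$ are not weakly semiprojective with respect to the class of unital graph $C^*$-algebras. For $\K^\sim$ one maps the matrix units into $\prod_i M_{n_i}/\bigoplus_i M_{n_i}$ (with $n_i\to\infty$) by truncation: since each fixed element involves only finitely many matrix units, this is a genuine $*$-homomorphism, faithful on the ideal $\bK$, yet it has no lift, because any honest $*$-homomorphism from $\K^\sim$ into a finite-dimensional algebra annihilates the simple infinite-dimensional ideal $\bK$ and hence cannot asymptotically match the image of a matrix unit. The circle case is identical with targets $M_{n_i}(C(\T))$, using that a finite-dimensional representation of a fibre of $C(\T)\otimes\bK$ must vanish. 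Both $M_{n_i}$ and $M_{n_i}(C(\T))$ are unital graph $C^*$-algebras.

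Finally I would transfer non-weak-semiprojectivity from $J/I$ up to $C^*(E,\mys)$. Assuming $C^*(E,\mys)$ weakly semiprojective, the argument of Proposition \ref{qissp} (here $I$ is generated by the finitely many vertex projections $\{p_x\}_{x\in K_I}$, which can be forced to vanish by passing far enough down the tower) shows that the gauge-invariant quotient $C^*(E,\mys)/I$ is again weakly semiprojective. Now $J/I$ is a gauge-invariant \emph{ideal} of $C^*(E,\mys)/I$, and any lift of a test map on $C^*(E,\mys)/I$ restricts to a lift on the ideal; so it suffices to extend the non-lifting test map of the previous paragraph from $J/I$ to $C^*(E,\mys)/I$, which one arranges through the multiplier algebra, using that $J/I=p(C^*(E,\mys)/I)p$ is a unital corner cut by a sum of vertex projections. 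The hard part is precisely this transfer: weak semiprojectivity does not pass to corners or to ideals in general --- this very failure is what yields the counterexamples to Blackadar's conjectures --- and the point is that the gauge-invariance of $I$ and $J$, together with the finiteness of $E^0$ (so that both are generated by finitely many, canonically liftable, vertex projections), is what makes the descent to the subquotient legitimate.
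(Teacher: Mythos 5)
Your reduction to the (TQ)/(MQ) cases, your construction of a subquotient resembling $\K^\sim$ or $(C(\T)\otimes\bK)^\sim$, and your verification that those two algebras are not weakly semiprojective relative to unital graph algebras are all sound. The genuine gap is the final transfer step, which you yourself flag as ``the hard part'' and then do not carry out. Two concrete problems. First, a gauge-invariant ideal $J/I$ of $C^*(E,\mys)/I$ associated to a hereditary saturated set is \emph{not} equal to the corner $p(C^*(E,\mys)/I)p$ cut by the sum of its vertex projections; that corner is merely full in the ideal (the ideal also contains elements $s_\alpha a s_\beta^*$ with $\alpha,\beta$ leaving the hereditary set), so your identification ``$J/I=p(C^*(E,\mys)/I)p$'' fails, and with it the claim that the bad test map extends ``through the multiplier algebra.'' Second, even granting Morita equivalence, deducing non-(weak-)semiprojectivity of the ambient algebra from that of an ideal or corner requires extending the obstruction map $J/I\to\prod_i B_i/\bigoplus_i B_i$ to all of $C^*(E,\mys)/I$ with compatible targets, and no mechanism for this is supplied --- asserting that ``gauge-invariance and finiteness make the descent legitimate'' is not an argument. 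Worse, the paper's own route from condition (viii) to failure of semiprojectivity (Theorem \ref{Thm:finiteII}) passes through Proposition \ref{cornissp}, which rests on Theorem \ref{Thm:finite}, which rests on Theorem \ref{nec} itself; so invoking that machinery here would be circular.

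The paper avoids the extension problem entirely by building the test algebras globally from the start: it passes to a quotient $C^*(F)$ supported on $s(E^\dagger w)$, forms the \emph{unital} graph algebras $C^*(G_n,\mys_{G_n})$ by truncating each infinite edge bundle into $V=\Omega_v\cap F$ to its first $n$ edges, and defines $\Xi\colon C^*(F)\to\prod_nC^*(G_n,\mys_{G_n})/\sum_nC^*(G_n,\mys_{G_n})$ on generators. A lift $\Psi=(\psi_N)$ would carry the relation $m[p_V]\le[q]$ (valid for all $m$ in $C^*(F)$ because of the infinitely many mutually orthogonal subprojections $s_es_e^*\precsim p_V$ under $q$) into $V(C^*(G_N,\mys_{G_N}))$, where the truncation forces $[q]\le N[p_V]$; the resulting inequality $(N+1)[p_V]\le N[p_V]$ contradicts the stable finiteness of $p_V$, which is exactly what (FQ) guarantees via Proposition \ref{prop:stablyfinite}. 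If you want to salvage your subquotient strategy, you would have to prove the extension/descent lemma yourself; as written, the proposal establishes the obstruction only on a subquotient, not on $C^*(E,\mys)$.
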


We model our proof on the lack of weak semiprojectivity of $\bK^{\sim}$
and $(C(\T)\otimes\bK)^{\sim}$, which are graph $C^*$-algebras given
by
\[
\xymatrix{
{\circ}\ar@{=>}[r]&{\circ}&&{\bullet}\ar@{=>}[r]\ar@(ld,lu)[]&{\circ}}
\]
respectively.
 Indeed, the (FQ)
condition of some set $\Omega_v$ will allow us to find a quotient $C^*(F)$ of $C^*(E,\mys)$ with
properties similar to one of these examples, and then by Proposition
\ref{qissp}, we can conclude that $C^*(E,\mys)$ is not weakly semiprojective.
In fact, as we shall see later, any non-semiprojective unital graph algebra contains one of these algebras as a subquotient, up to Morita equivalence. 

\begin{proof}
We fix $v \in E^0$ such that $\Omega_v$ satisfies (FQ) relative to
$\mys$. Since $E^0$ is finite, this means that there is some $w\in E^0$
such that either $\Omega_v$ has property (MQ) at $w\not\in S$ or $\Omega_v$ has property (TQ) at $w\in E^0$. In either case, by setting
\[
F=s(E^\dagger w)
\] 
and considering the \nap $(E\setminus F,E^0\reg)$ we can pass to a quotient $C^*(F)$ with $v\in F$ and $V=\Omega_v\cap F\subseteq F$ such that $p_V\in C^*(F)$ is nonzero and stably finite and such that for each $x\in V$, $|vF^1x|=\infty$.
We now define subgraphs $G_n$ of $F$, by 
enumerating the infinitely many vertices from $x\in V$ to $v$ by $\{e_x^k\}_{k\in \N}$ and letting $G_n$ be the graph with vertex set $E^0$ and edge set
\[
G_n^1=E^1\setminus \{e_w^k\mid w\in V, k>n\}
\]

There is a $*$-homomorphism
\[
\Xi:C^*(F)\to\frac{\prod C^*(G_n,\mys_{G_n})}{\sum C^*(G_n,\mys_{G_n})}
\]
defined by 
\[
\Xi(p_v)=[(p_v,p_v,\dots)]\qquad
\Xi(s_{e})=\begin{cases}[(\overbrace{0,\dots,0}^k,s_{e_w^k},s_{e_w^k},\dots)]&e=e_w^k\\
[(s_e,s_e,\dots)]&\text{other }e
\end{cases}
\]
If $C^*(E)$ is weakly semiprojective with respect to the class of unital graph $C^*$-algebras, there is a map $\Phi=(\phi_n):C^*(E)\to 
\prod C^*(G_n,\mys_{G_n})$ lifting $\Xi\circ \kappa$. Note that for any projection $p\in C^*(E)$ with $\kappa(p)=0$ we have $\Xi(p)\in \sum_{n=1}^\infty 
C^*(G_n,\mys_{G_n})$ and hence $\Xi(p)=
\sum_{n=1}^N
C^*(G_n,\mys_{G_n})$ for some $N$ depending on $p$. But since $\ker \kappa$ is generated by a finite number of projections, we may in fact assume, by replacing a number of coordinate maps $\phi_n$  by zero, that $\Phi(\ker\kappa)=0$. Let $\Psi=(\psi_n)$ denote the induced map, so that we now have
\[
\xymatrix{
{C^*(E)}\ar[r]^-{\kappa}\ar[d]_{\Phi}&{C^*(F)}\ar[d]_{\Xi}\ar[ld]_{\Psi}\\
{\prod_{n=1}^\infty 
C^*(G_n,\mys_{G_n})}\ar[r]_-{\pi}&{\dfrac{\prod_{n=1}^\infty 
C^*(G_n,\mys_{G_n})}{\sum_{n=1}^\infty 
C^*(G_n,\mys_{G_n})}}}
\]
We now consider the two projections $p_V$ and
\[
q=p_w-\sum_{e\in w E^1\setminus E^1V}{s_es_e^*}
\]
which may be considered as elements both of $C^*(F)$ and $C^*(G_n,\mys_{G_n})$ for any $n$ since none of the edges in 
$\{e_x^k\}_{k\in \N}$ are involved.

Since $\Psi$ is a lift of $\Xi$ there is an $N$ such that
\[
\left\|p_V-\psi_N(p_V) \right\|<1\qquad
\left\|q-\psi_N(q) \right\|<1
\]
Note that $m[p_V]\leq [q]$ in $V(C^*(F))$ for any $m$, so that 
\[
(N+1)[\psi_N(p_V)]=(N+1)[p_V]\leq [q]=[\psi_N(q)]
\]
in $V(C^*(G_n,\mys_{G_n}))$. The relation holds true also in $V(C^*(G_n))$ after passing to quotients. But here, we further have
$
[q]\leq N[p_V]
$
so we get 
\[
(N+1)[p_V]\leq N[p_V]
\]
which contradicts the stable finiteness of $p_V$.
\end{proof}

\subsection{Semiprojectivity criteria}

Combining the results above, we now present the first installment of our main results on semiprojectivity of unital graph $C^*$-algebras. In the last section of the paper, we provide even more equivalent conditions to the striking effect that semiprojectivity in a certain sense can be checked ``locally''. But for now, we prove

\begin{theorem}\label{Thm:finite}
Let $E=(E^0,E^1,s,r)$ be a graph such that $E^0$ is finite. 
For each $v\in E^0$, set
\[
\Omega_v=\{w\in E^0\mid |wE^1v|=\infty\}.
\]
The following are equivalent
\begin{enumerate}[(i)]
\item $C^*(E,\mys)$ is semiprojective 
\item $C^*(E,\mys)$ is weakly semiprojective 
\item $C^*(E,\mys)$ is weakly semiprojective w.r.t. the class of unital graph $C^*$-algebras
\item  For each $v$, $\Omega_v$
does not have (FQ) relative to $\mys$.
\item  For each $v$, $p_{\Omega_v}$ is properly infinite.
\end{enumerate}
\end{theorem}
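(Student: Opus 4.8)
The plan is to close the cycle of implications $(i)\Rightarrow(ii)\Rightarrow(iii)\Rightarrow(iv)\Rightarrow(i)$ and then to dispose of $(iv)\Leftrightarrow(v)$ separately. Essentially all of the analytic content has already been isolated in the sufficiency result (Theorem~\ref{suf}), the necessity result (Theorem~\ref{nec}), and the characterization of proper infiniteness via property (FQ) (Theorem~\ref{Thm:notPI}), so what remains is to arrange these inputs in the correct order rather than to prove anything new.

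First I would observe that $(i)\Rightarrow(ii)$ is the general fact that every semiprojective $C^*$-algebra is weakly semiprojective, and that $(ii)\Rightarrow(iii)$ is immediate from the definitions, since a lifting property demanded against \emph{all} $C^*$-algebras in particular holds against the smaller class of unital graph $C^*$-algebras. The equivalence $(iv)\Leftrightarrow(v)$ is then purely a restatement: by Theorem~\ref{Thm:notPI} together with condition (i) of Theorem~\ref{Thm:notPI:pre}, for a fixed $v$ the projection $p_{\Omega_v}$ fails to be properly infinite precisely when $\Omega_v$ has property (FQ) relative to $\mys$. Hence asserting proper infiniteness of $p_{\Omega_v}$ for every $v$ is the exact negation, vertex by vertex, of the condition that some $\Omega_v$ has (FQ) relative to $\mys$.

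The two substantive links are $(iii)\Rightarrow(iv)$ and $(iv)\Rightarrow(i)$. For the former I would argue by contraposition: if $(iv)$ fails, then some $\Omega_v$ has (FQ) relative to $\mys$, and Theorem~\ref{nec} asserts that $C^*(E,\mys)$ is then not even weakly semiprojective with respect to the class of unital graph $C^*$-algebras, which is exactly the failure of $(iii)$. The implication $(iv)\Rightarrow(i)$ is nothing but Theorem~\ref{suf}. This completes the cycle, and with $(iv)\Leftrightarrow(v)$ the theorem follows.

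The reason there is no genuine obstacle left is structural, and worth flagging: the positive half (Theorem~\ref{suf}) delivers the \emph{strongest} conclusion on the list, full semiprojectivity, while the negative half (Theorem~\ref{nec}) rules out the \emph{weakest} notion on the list, weak semiprojectivity against unital graph algebras. Because the same combinatorial condition on the sets $\Omega_v$ governs both extremes, every intermediate notion is forced to coincide and the apparent gap between $(ii)$ and $(iii)$ collapses for free. The only point requiring care is to invoke the necessity result in its sharpest form, stated for the restricted class appearing in $(iii)$ rather than for arbitrary $C^*$-algebras, since it is precisely this sharpness that lets the loop close through the weakest of the three semiprojectivity conditions.
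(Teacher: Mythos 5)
Your proposal is correct and follows exactly the route the paper takes: its proof of Theorem \ref{Thm:finite} is literally ``Combine Propositions \ref{nec}, \ref{suf} and \ref{Thm:notPI}'', which unpacks to the cycle you describe — (i)$\Rightarrow$(ii)$\Rightarrow$(iii) by generalities, (iii)$\Rightarrow$(iv) by contraposition via Theorem \ref{nec}, (iv)$\Rightarrow$(i) by Theorem \ref{suf}, and (iv)$\Leftrightarrow$(v) by Theorem \ref{Thm:notPI}. Your structural remark about the sufficiency result delivering the strongest condition while the necessity result kills the weakest is precisely why the paper's one-line proof works.
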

\begin{proof}
Combine Propositions \ref{nec}, \ref{suf} and \ref{Thm:notPI}.
\end{proof}

Note that since a unital and simple graph $C^*$-algebra is either $M_n(\C)$ or is purely infinite, we have reproved Szyma\'nski's result (\cite{ws:scdg}) that any such $C^*$-algebra is semiprojective. We also easily get the generalization

\begin{corollary}
A purely infinite and unital graph $C^*$-algebra $C^*(F)$ is semiprojective.
\end{corollary}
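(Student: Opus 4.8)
The plan is to apply Theorem \ref{Thm:finite}, specifically the implication (v)$\Longrightarrow$(i), since for a purely infinite algebra condition (v) is essentially immediate.

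First I would note that a graph $C^*$-algebra $C^*(F)$ is unital precisely when $F^0$ is finite, so that Theorem \ref{Thm:finite} applies to $C^*(F)=C^*(F,F^0\reg)$, i.e.\ with the choice $\mys=F^0\reg$. It therefore suffices to verify condition (v): that for each $v\in F^0$, the projection $p_{\Omega_v}$ is properly infinite.

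Next I would invoke the standard fact, recalled in the introduction, that in a purely infinite $C^*$-algebra every nonzero projection is properly infinite. Applying this to $C^*(F)$, any nonzero $p_{\Omega_v}$ is properly infinite; and in the case $p_{\Omega_v}=0$ it is properly infinite trivially, since $0$ is recorded after Definition \ref{pidef} to be (the unique projection that is) both finite and properly infinite. Hence condition (v) holds for every $v\in F^0$, and $C^*(F)$ is semiprojective by Theorem \ref{Thm:finite}.

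There is no real obstacle here: the corollary is a direct specialization of the main criterion, and the only point requiring any attention is the (routine) observation that pure infiniteness forces every projection --- including the degenerate case $p_{\Omega_v}=0$ --- to be properly infinite, after which Theorem \ref{Thm:finite} does all the work.
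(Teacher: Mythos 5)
Your proof is correct and is essentially identical to the paper's: both verify condition (v) of Theorem \ref{Thm:finite} by invoking the fact (due to Kirchberg--R{\o}rdam) that every nonzero projection in a purely infinite $C^*$-algebra is properly infinite. Your extra remark about the degenerate case $p_{\Omega_v}=0$ is a harmless and valid addition, consistent with the convention recorded after Definition \ref{pidef}.
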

\begin{proof}
By Theorem \ref{Thm:finite}(v), we need to check that certain projections in $C^*(F)$  are properly infinite. But in this case, all projections have this property, cf. \cite[4.16]{ekmr:npic}.
\end{proof}

The reader is asked to note that if $C^*(F)$ given above has only finitely many ideals, it has real rank zero. In fact, we do not know of a semiprojective $C^*$-algebra with finitely many ideals which fails to have real rank zero.

\begin{example}\label{recexxii}
Consider the graph $C^*$-algebras given by the graphs $\myi$--$\myvi$ in Example \ref{recexx}. We note that the set $\{w_1\}$ has (FQ) in all of these graphs, and hence that in all cases except $\myv$, the $C^*$-algebras are not semiprojective. In the case $\myv$ we need instead to check whether $\{v_0,w_1\}$ has (FQ), and indeed this is not the case. 
Consequently, $C^*(\myv)$ is in fact semiprojective.
\end{example}

\begin{remark}
Note that as a consequence of Proposition \ref{fissp}, if the unital Toeplitz algebra $\toep{E}$ is semiprojective, so is any other relative graph algebra $C^*(E,\mys)$. Conversely,  if the graph algebra $C^*(E)$ is not semiprojective, neither is any other relative graph algebra on $E$. Intermediate cases occur; consider for instance the graph
\[
\xymatrix{
\circ\ar@(lu,ld)[]\ar@(u,l)[]\ar[r]_(0.2){v_1}&\circ\ar@{=>}[r]_(0.2){v_2}_(1.2){v_3}&\circ}
\]
where we have four options for choosing which instances of \CK{3} to impose: $\mys_1=\emptyset$, $\mys_2=\{v_1\}$, $\mys_3=\{v_2\}$ and $\mys_4=\{v_1,v_2\}$. We get directly from the (FQ) criterion at $\Omega_{v_3}=\{v_0\}$ that the algebra is semiprojective precisely when \CK{3} is imposed at $v_2$, so $C^*(E,\mys_1)=\toep{E}$ and $C^*(E,\mys_2)$ fail to be semiprojective whereas 
$C^*(E,\mys_3)$ and $C^*(E,\mys_4)$ will be.
\end{remark}

The commutative $C^*$-algebra $C_0(\N)$ is the non-unital graph algebra  given by a countable set of vertices with no edges. It is easy to see that it satisfies (ii)--(v) above, but not (i). For general non-unital graph $C^*$-algebras we may get infinite sets $\Omega_v$ so that in fact (iv) is not defined and (v) involves a projection in the multiplier algebras, no it is not at this stage clear how to generalize properties (iv) and (v) to the non-unital setting, and 
we will see below that (i) is not equivalent to (ii) for non-unital graph $C^*$-algebras, even when these are simple.

We may, however, resolve the semiprojectivity issue for a general Toeplitz algebra by combining our result with the  following well-known facts

\begin{proposition}\label{infeasy}
Let $E$ be a graph and $\mys\subseteq E^0\reg$.
\begin{enumerate}[(i)]
\item If $K_*(C^*(E,\mys))$ is not finitely generated, then $C^*(E,\mys)$ is not semiprojective
\item $K_0(\toep{E})=\Z^{E_0}$
\end{enumerate}
In particular, $\toep{E}$ can only be semiprojective when $|E^0|<\infty$.
\end{proposition}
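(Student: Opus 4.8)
The plan is to dispatch the three assertions separately and then glue them together for the final clause, since each is an instance of a known structural principle. For (i) the relevant input is the general fact that a separable semiprojective \CA has finitely generated $K$-theory; I would cite this from the semiprojectivity literature (see \cite{setal:ccsr}), and (i) is then precisely its contrapositive. The one-line reason I would recall, rather than reprove, is that $K$-theory is continuous and that semiprojectivity forces any map $A\to D/\overline{\cup J_k}$ to factor through some finite stage $D/J_k$; realizing a non-finitely-generated $K_*(A)$ as an inductive limit of its finitely generated subgroups by a system of $C^*$-algebras produces a map admitting no partial lift, which is the obstruction.

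For (ii) the cleanest route is Pimsner's theorem in the graph-algebra form established by Katsura (\cite{tk:cac}). Since $\toep{E}=C^*(E,\emptyset)$ is the Toeplitz algebra of the graph correspondence over the coefficient algebra $c_0(E^0)$ with no Cuntz--Krieger relation imposed (recall $\mys=\emptyset$), the canonical inclusion $c_0(E^0)\hookrightarrow\toep{E}$ sending $\delta_v$ to $p_v$ is a $KK$-equivalence. Hence $K_0(\toep{E})\cong K_0(c_0(E^0))$, which is the free abelian group on the vertex set, i.e. $\Z^{E^0}$, with basis the vertex classes $[p_v]$ (and, as a byproduct, $K_1(\toep{E})=0$). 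The only point requiring genuine care here is that the cited form of the $KK$-equivalence must be general enough to cover arbitrary graphs --- with infinitely many vertices or edges and with infinite receivers --- rather than just row-finite or finite ones; Katsura's framework handles exactly this generality.

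Finally, the \emph{in particular} clause is immediate from (i) and (ii): were $\toep{E}$ semiprojective, (i) would force $K_*(\toep{E})$ to be finitely generated, whereas by (ii) $K_0(\toep{E})=\Z^{E^0}$ is free abelian of rank $|E^0|$, hence finitely generated if and only if $E^0$ is finite. I expect the main obstacle to sit entirely in (ii): it is not an elementary computation but relies on the Pimsner--Katsura $KK$-equivalence machinery, so the real work is confirming that the chosen reference applies verbatim in the absence of any row-finiteness or finiteness hypothesis on $E$, so that the conclusion $K_0(\toep{E})=\Z^{E^0}$ is valid for the fully general graphs considered throughout the paper.
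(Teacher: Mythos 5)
Parts (ii) and the final clause of your argument are fine: deducing $K_0(\toep{E})\cong K_0(c_0(E^0))=\Z^{E^0}$ from the Pimsner--Katsura $KK$-equivalence of the inclusion $c_0(E^0)\hookrightarrow\toep{E}$ is a legitimate alternative to the paper's citation of \cite{tmcsemt:imkga}, and your caution about needing the statement in full generality (infinite receivers, no row-finiteness) is well placed and is covered by \cite{tk:cac}.

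Part (i), however, contains a genuine gap. The ``general fact'' you propose to cite --- that every separable semiprojective \CA has finitely generated $K$-theory --- is not a theorem in the literature; it is a well-known open question of Blackadar, and it is certainly not in \cite{setal:ccsr}. Your fallback sketch is precisely the step nobody knows how to carry out in general: to turn the directed system of finitely generated subgroups of $K_*(A)$ into an inductive system of \CA{s} \emph{receiving a suitable map from $A$} you would need realization and lifting results (UCT-type hypotheses at the very least), and even then this does not yield the general statement. The paper sidesteps this entirely by using graph-algebra structure: by Lemma \ref{subgraph}, $C^*(E,\mys)$ is the closure of an increasing union of sub-\CA{s} $C^*(F_i,\mys_{F_i})$ attached to \emph{finite} subgraphs $F_i$, and each of these has finitely generated $K$-theory (being a unital graph algebra of a finite graph). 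Semiprojectivity of $C^*(E,\mys)$ together with the standard mapping-telescope/partial-lifting argument then forces $K_*(C^*(E,\mys))$ to be a retract of (indeed isomorphic to) $K_*(C^*(F_i,\mys_{F_i}))$ for some $i$, hence finitely generated --- contradiction. You need to run this specific argument (or an equivalent one exploiting the exhaustion by finite subgraphs); an appeal to a general semiprojectivity-implies-finitely-generated-$K$-theory principle does not exist to be appealed to.
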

\begin{proof}
Using Lemma \ref{subgraph} we can write $C^*(E,\mys)$ as an increasing union of relative graph algebras $C^*(F_i,\mys_{F_i})$ associated to finite graphs. These have finitely generated $K$-theory, and had $C^*(E,\mys)$ been semiprojective, we would have that $K_*(C^*(E,\mys))\simeq K_*(C^*(F_i,\mys_{F_i}))$ for some $i$, which is a contradiction. The second claim is noted, e.g., in \cite{tmcsemt:imkga}.
\end{proof}

Spielberg \cite{js:scpic} has proved that any purely infinite and simple graph $C^*$-algebra with finitely generated $K$-theory is in fact semiprojective, so again we see that semiprojectivity of relative graph algebras varies dramatically with the choice of $\mys$.

A graph $C^*$-algebra $\overline{E}$ is \emph{amplified} when whenever there is an edge from $v$ to $w$ in $\overline{E}$, there are infinitely many. In other words, all entries of the adjacency matrix lie in $\{0,\infty\}$. This class of $C^*$-algebras is studied in \cite{seerapws:agc}, associating to $\overline{E}$, in an obviously bijective fashion, the simple graph $E$ with each collection of edges replaced by one representative. Our criterion here simplifies to the following result, which was independently observed by S\o rensen and Spielberg:

\begin{corollary}
The following  are equivalent for any graph $E$
\begin{enumerate}[(i)]
\item $C^*(\overline{E})$ is semiprojective
\item $E^0$ is finite, and for each $v$ and $w$ in $E$, if there is a path from $v$ to $w$ of length $\ell$, there is also a path of length $>\ell$.
\end{enumerate}
\end{corollary}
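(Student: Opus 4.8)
The plan is to deduce this from Theorem~\ref{Thm:finite} applied to the amplified graph $\overline{E}$ with $\mys=\emptyset$. First I would observe that every vertex of $\overline{E}$ is singular: a vertex $v$ either receives no edges or, by amplification, receives infinitely many, so $|v\overline{E}^1|\in\{0,\infty\}$ and hence $\overline{E}^0\reg=\emptyset$. Consequently $C^*(\overline{E})=C^*(\overline{E},\emptyset)=\toep{\overline{E}}$. By Proposition~\ref{infeasy}(ii) its $K_0$-group is $\Z^{\overline{E}^0}$, so semiprojectivity forces $E^0=\overline{E}^0$ to be finite; this secures the finiteness half of (ii) in the direction (i)$\Rightarrow$(ii) and lets me assume $|E^0|<\infty$ throughout the rest. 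Under this assumption Theorem~\ref{Thm:finite} reduces (i) to the statement that $\Omega_v=\{w\mid |w\overline{E}^1v|=\infty\}$ fails (FQ) relative to $\emptyset$ for every $v$, and since $\overline{E}$ is amplified, $\Omega_v$ is precisely the set of in-neighbours of $v$ in the simple graph $E$.

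Next I would simplify what (FQ) means in $\overline{E}$. Because $|E^0|<\infty$, Property (AQ) cannot occur by Remark~\ref{FQremark}. Moreover $\overline{E}$ satisfies Condition~(K): any vertex on a cycle lies on infinitely many, since each edge of the cycle may be swapped for one of its parallel copies; so again by Remark~\ref{FQremark} no set has Property (TQ). Thus, with $\mys=\emptyset$, Property (FQ) collapses to Property (MQ) for some $w$. The key computation is then to evaluate, for $V=\Omega_v$, the cardinality $|\{\xi\in\overline{E}^*\mid s(\xi)=w,\ r(\xi)\in\Omega_v\}|$. Any path of positive length uses an edge and each edge has infinitely many parallel copies, so a single positive-length $E$-path from $w$ into $\Omega_v$ spawns infinitely many such $\xi$; the only finite contribution is the length-zero path $w$ itself. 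Hence this cardinality is finite and nonzero exactly when $w\in\Omega_v$ and there is \emph{no} positive-length path in $E$ from $w$ to $\Omega_v$. I conclude that $\Omega_v$ has (FQ) if and only if some in-neighbour $w$ of $v$ admits no positive path to an in-neighbour of $v$.

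Finally I would translate this criterion into the length condition of (ii). Negating the last statement, (i) holds (for $|E^0|<\infty$) exactly when for every $v$ and every in-neighbour $w$ of $v$ there is a positive path from $w$ to some in-neighbour $w'$ of $v$; appending the edge $w'\to v$ shows this is equivalent to asserting that \emph{for every edge $w\to v$ of $E$ there is a path $w\to v$ of length at least $2$}. This in turn equals the path condition in (ii), read for paths of positive length (the trivial length-zero path being excluded, as the single-vertex algebra $\C$ shows it must be): one direction is the special case $\ell=1$, and for the converse I would take any path of length $\ell\ge1$, isolate its terminal edge $u\to w$, and replace it by a path $u\to w$ of length $\ge2$ supplied by the criterion, yielding a strictly longer path with the same endpoints. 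The main obstacle, and the only step demanding genuine care, is this reduction of (FQ) to a clean path condition: one must verify that (TQ) and (AQ) truly never arise in an amplified graph and that the length-zero path is correctly accounted for in translating (MQ), so that proper infiniteness of $p_{\Omega_v}$ corresponds precisely to the absence of a path of maximal length between any ordered pair of vertices.
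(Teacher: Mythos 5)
Your argument is correct and follows essentially the same route as the paper: reduce to $\toep{\overline{E}}$ and Proposition~\ref{infeasy} for finiteness of $E^0$, then apply Theorem~\ref{Thm:finite} and translate the (FQ) condition for $\Omega_v$ into the path-length condition, with (AQ) and (TQ) ruled out by finiteness and Condition~(K) respectively. The only (immaterial) difference is that you analyze (MQ) directly from its definition, where the paper's forward direction instead tracks the behaviour of \textsc{GetTop} and \textsc{FindFQ}; your version is if anything slightly more explicit about the length-zero path and the reduction to $\ell=1$.
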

\begin{proof}
Since every vertex is singular, we have $C^*(\overline{E})=\toep{\overline{E}}$, so by Proposition \ref{infeasy} can not be semiprojective when $|E^0|=\infty$. Let us therefore assume that $|E^0|<\infty$ so that $C^*(\overline{E})$ is unital. Suppose now that there is a path $e_1\cdots e_\ell$ in $E$ with $s(e_1)=v$ and $r(e_\ell)=w$, but no longer path. This property passes to subpaths, so we may assume that $\ell=1$. Note also that $v,w\in E^0_0$. 
Turning to $\overline{E}$, we see that $v\in \Omega_w$, and since there can be no indirect path from $v$ to $w$, we get that $v\in{\textsc{GetTop}}(\Omega_v)$. Since $v\not \in \mys=\emptyset$, we have ${\textsc{FindFQ}}(\Omega_v,\emptyset)=(\star,\star)$.

In the other   direction, assume that $\Omega_v$ has property (FQ) for some $v$. Since the other options are ruled out, $\Omega_v$ must have (MQ) with respect to some $w$. Hence there are only finitely many paths from $w$ to $v$, one of which is longest.
\end{proof}

Now consider the graph $E$ given as
\[
\xymatrix{
\bullet\ar@/^/[r]&\bullet\ar@/^/[r]\ar@/^/[l]&\bullet\ar@/^/[r]\ar@/^/[l]&\bullet\ar@/^/[r]\ar@/^/[l]&\bullet\ar@/^/[r]\ar@/^/[l]&\cdots\ar@/^/[l]}
\]
We have seen in Proposition \ref{infeasy}(i) that $C^*(\overline{E})$ is not semiprojective because it has $K_0(C^*(\overline{E}))=\Z^\infty$. But since it is simple and purely infinite,
we get by \cite{hl:wspisc} or \cite{js:wspic} that it is weakly semiprojective. Consequently, property (ii)  of Theorem \ref{Thm:finite} is satisfied by $C^*(\overline{E})$, but not (i).

\begin{example}\label{alltwobytwo}
Consider all $2\times 2$-matrices with entries in $\{0,1,2,\infty\}$. Among these $4^4=256$  matrices, only the following 12 are not leading to semiprojective graph $C^*$-algebras
\begin{gather*}
\begin{bmatrix}0& 0\\ \infty& 0\end{bmatrix},
\begin{bmatrix}1& 0\\ \infty& 0\end{bmatrix},
\begin{bmatrix}2& 0\\ \infty& 0\end{bmatrix},
\begin{bmatrix}0& \infty\\ 0& 0\end{bmatrix},
\begin{bmatrix}1& \infty\\ 0& 0\end{bmatrix},
\begin{bmatrix}0& 0\\ \infty& 1\end{bmatrix}\\
\begin{bmatrix}1& 0\\ \infty& 1\end{bmatrix},
\begin{bmatrix}2& 0\\ \infty& 1\end{bmatrix},
\begin{bmatrix}0& \infty\\ 0& 1\end{bmatrix},
\begin{bmatrix}1& \infty\\ 0& 1\end{bmatrix},
\begin{bmatrix}0& \infty\\ 0& 2\end{bmatrix},
\begin{bmatrix}1& \infty\\ 0& 2\end{bmatrix}
\end{gather*}
Note that even though $\left[\begin{smallmatrix}2& 0\\ \infty& 0\end{smallmatrix}\right]$ is not semiprojective, the transpose $\left[\begin{smallmatrix}2& \infty\\0& 0\end{smallmatrix}\right]$ is. In general, there is no relation between the semiprojectivity of the $C^*$-algebras associated to a graph and its reversion.

Note also that except for the graph given by two matrices $\left[\begin{smallmatrix}0& \infty\\ 0&
  0\end{smallmatrix}\right],\left[\begin{smallmatrix}0&0\\ \infty&
  0\end{smallmatrix}\right]$ above, the non-semiprojective examples have
two or more ideals. In fact we shall see in Corollary \ref{oneideal} below that when there is precisely one non-trivial ideal in $C^*(E)$, the only non-semiprojective examples look very much like these two examples.
\end{example}

\section{Corners, subquotients and extensions}

Our general knowledge of closure properties for the class of semiprojective $C^*$-algebras is rather incomplete in spite of decades of interest. In the following section, we investigate the closure properties restricted to graph algebras and show that some of the conjectured closure properties fail even here. 

\subsection{Subquotients}

It is well known that ideals of semiprojective $C^*$-algebras are not semiprojective,  and the prime example is given by a gauge invariant ideal of a unital graph algebra: the standard Toeplitz algebra is semiprojective although it contains the ideal $\bK$ which is not. 
There is also an ample supply of non-semiprojective quotients of a semiprojective graph $C^*$-algebra such as $C(\T)$, cf.\ \cite{apwsht:cscc}. Of course 
the kernel in that case is never gauge invariant, and we have in fact seen in Lemma \ref{qissp} that when $I\gidealof C^*(E,\mys)$ with $C^*(E,\mys)$ semiprojective and unital, then also $C^*(E,\mys)/I$ is semiprojective.

In general, we do not know how to decide semiprojectivity of non-unital graph algebras, but our result can be extended to those non-unital graph $C^*$-algebras that happen to be gauge invariant  ideals of unital ones. Indeed, following \cite{kdjhhws:srgatigal} as amended in \cite{ermt:iga}, we will for any ideal given by an \nap $(H,\myr)$ consider the map
\[
\eta(v)=\begin{cases} E^*(\myr\backslash E^0) E^1 v&v\in H\\
E^*v&v\not\in H\end{cases}
\]
associating to each vertex either the set of paths starting in $v$ and leaving $\myr$ after one step in the case of an $v$ in $H$ or the the set of all paths starting at $v$ in the remaining relevant cases.  With this definition, \cite{ermt:iga} extends to the relative case as follows
\begin{theorem}\label{idealasgraph}
Let $E=(E^0,E^1,s,r)$ be a graph such that $E^0$ is finite and
consider $I\gidealof C^*(E,\mys)$ represented by the \nap $(H,\myr)$. 
Then $I$ is isomorphic to the graph $C^*$-algebra $C^*(F,\mys_F)$ with
\begin{eqnarray*}
F^0&=&H\cup (\myr\backslash\mys)\cup\bigcup_{v\in H\cup (\myr\backslash\mys)}\eta(v)\\
F^1&=&HE^1\cup (\myr\backslash \mys)E^1H\cup \bigcup_{v\in H\cup (\myr\backslash\mys)}\eta(v)\\
\mys_F&=&(H\cap \mys)\cup  \bigcup_{v\in H\cup (\myr\backslash\mys)}\eta(v)
\end{eqnarray*}
and $I^\sim$ is isomorphic to the graph $C^*$-algebra $C^*(G,\mys_G)$ with
\begin{eqnarray*}
G^0&=&H\cup (\myr\backslash\mys)\cup\{\infty\}\cup \bigcup_{v\in H\cup (\myr\backslash\mys), |\eta(v)|<\infty}\eta(v)\\
G^1&=&HE^1\cup (\myr\backslash \mys)E^1H\cup \bigcup_{v\in H\cup (\myr\backslash\mys)}\eta(v)\\
\mys_G&=&(H\cap \mys)\cup  \bigcup_{v\in H\cup (\myr\backslash\mys),|\eta(v)|<\infty}\eta(v)
\end{eqnarray*}
such that for $\xi\in \eta(v)$  have
\[
s_F(\xi)=s_G(\xi)=v\qquad r_F(\xi)=r_G(\xi)=\xi
\]
when $\eta(v)$ is finite, and
\[
s_F(\xi)=s_G(\xi)=v\qquad r_F(\xi)=\xi\qquad r_G(\xi)=\infty
\]
when $\eta(v)$ is infinite.
\end{theorem}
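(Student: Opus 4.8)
The plan is to obtain both isomorphisms from their non-relative counterparts in \cite{ermt:iga} via the by-now familiar device of writing $C^*(E,\mys)=C^*(\makestandard{E}{\mys})$. First I would translate the ideal data: under the gauge-invariant ideal correspondence the \nap $(H,\myr)$ for $C^*(E,\mys)$ matches the saturated hereditary set $H$ in $\makestandard{E}{\mys}$ together with the breaking vertices supplied by $\myr\setminus\mys$, the auxiliary vertices $v'$ of $\makestandard{E}{\mys}$ accounting precisely for the regular vertices at which CK3 is not imposed. With this dictionary in hand, \cite{ermt:iga} applied to the ordinary graph $\makestandard{E}{\mys}$ produces a graph realizing $I$, and its recipe for the unitization produces $I^\sim$; the remaining work is to transcribe these outputs into the graphs $F$ and $G$ of the statement.

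For the identification of $I$ with $C^*(F,\mys_F)$ I would exhibit the candidate isomorphism on generators, sending $p_v$ for $v\in H$ and the gap projection $p_v-\sum_{e\in vE^1\setminus E^1H}s_es_e^*$ for $v\in\myr\setminus\mys$ to the vertex projections of $F$, and $s_\xi$ for $\xi\in\eta(v)$ to the edge generators attached to the new vertices. These elements generate $I$, so surjectivity is immediate. For the relations one checks that each path $\xi\in\eta(v)$ leaving $\myr$ creates a vertex receiving exactly the single edge $\xi$, forcing it to be regular and hence to lie in $\mys_F$; together with the retained status of the vertices of $H$ this yields $\mys_F=(H\cap\mys)\cup\bigcup_v\eta(v)$, and matching $s_F,r_F$ against the concatenation data is a direct transcription. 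Injectivity then follows from the gauge-invariant uniqueness theorem (Theorem \ref{giut}), once one verifies that none of the listed projections is annihilated, which is exactly the content of $\mys_F$ being chosen as above.

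For the unitization I would use that $I=C^*(F,\mys_F)$ fails to be unital precisely when some $\eta(v)$ is infinite, since each such $v$ contributes infinitely many sink vertices. The passage to $I^\sim$ in \cite{ermt:iga} collapses, for every $v$ with $|\eta(v)|=\infty$, all of these sinks to a single new vertex $\infty$ and redirects each edge $\xi\in\eta(v)$ into it, while leaving the finitely many sinks from finite $\eta(v)$ untouched. Since $\infty$ then receives infinitely many edges it is an infinite receiver, hence singular and excluded from $\mys_G$, whereas the retained finite sinks stay regular and lie in $\mys_G$; this produces exactly the displayed $G^0$, $G^1$, $\mys_G$ and the two prescriptions $r_G(\xi)=\xi$ or $r_G(\xi)=\infty$. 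That the resulting unital algebra is indeed $I^\sim$ follows because $\{\infty\}$ is the complement of a hereditary set whose associated gauge-invariant ideal is the copy of $I$ and whose quotient is $\C$, so that $C^*(G,\mys_G)=I+\C1$.

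The main obstacle is the bookkeeping of regular-versus-singular status across the passage to $\makestandard{E}{\mys}$. One must confirm that adjoining the vertices $v'$ and their edge copies neither creates spurious breaking vertices nor changes which $\eta(v)$ are finite, and that the CK3 relations imposed at the new sink vertices translate back exactly to the relations of $(F,\mys_F)$ and $(G,\mys_G)$. Carrying the generator-level verification through Theorem \ref{giut}, as sketched above, is what makes this rigorous, the genuinely new input beyond \cite{ermt:iga} being only the careful placement of vertices into $\mys_F$ and $\mys_G$.
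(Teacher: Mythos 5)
Your proposal is correct and follows essentially the same route as the paper: the first isomorphism is obtained by passing to $C^*(\makestandard{E}{\mys})$ and invoking \cite{ermt:iga}, and the second by observing that $C^*(G,\mys_G)$ is unital with a gauge-invariant ideal given by the \nap $(G^0\setminus\{\infty\},G^0\setminus\{\infty\})$ whose quotient is $\C$ and which is itself the graph realizing $I$. The paper states this more tersely, omitting the generator-level verification and the bookkeeping of regular versus singular vertices that you spell out, but the underlying argument is the same.
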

\begin{proof}
Passing from the relative setting to $C^*(\makestandard{E}{\mys})$, we obtain the first statement from \cite{ermt:iga}. For the second, we note that $C^*(G,\mys_G)$ is unital and has an \nap $(E^0\backslash \{\infty\},E^0\backslash \{\infty\})$. For the corresponding ideal $J$ we clearly have $C^*(G,\mys_G)/J=\C$, and $J$ is precisely given by the graph that we just saw represent $I$. Hence $C^*(G,\mys_G)=I^\sim$.
\end{proof}

In the result above $I^\sim$ is to be understood as forced unitization in the sense that $I^\sim=I\oplus \C$ when $I$ is already unital.

\begin{example}\label{recexxiii}
Recall that in graphs $\myii$--$\myvii$ from Example \ref{recexx} we could choose an \nap of the form either $(H,H)$ or $(H,H\cup\{v_0\})$. In all cases but the last there was only one possible choice of an \nap $(H,\myr)$, but in case $\myvii$ we could take both. Let us refer to the situation with $\myr=H$ as $\myviia$ and the other as $\myviib$. In all the cases $\myii,\myiii,\myvi$ and $\myviia$ we get by Theorem \ref{idealasgraph} that the corresponding ideals and their unitizations are given by the graphs 
\[
\xymatrix@C-=1mm{
&&&&&&&&&&\\
&&&\bullet\ar@(lu,ld)[]\ar@(l,d)[]\ar[u]_(0.2){w_0}\ar[ul]\ar[ull]\ar[ulll]^(0.6){\dots}&&&\circ\ar[lll]\ar[u]^(0.2){w_1}\ar[ur]\ar[urr]\ar[urr]\ar[urrr]_(0.6){\dots}&&&
}
\qquad
\xymatrix@C-=4mm{
&{\circ}&\\
\bullet\ar@(lu,ld)[]\ar@(l,d)[]\ar@{=>}[ur]_(0.3){w_0}^(1.1){\infty}&&\circ\ar[ll]\ar@{=>}[ul]^(0.3){w_1}
}
\]
In case $\myv$ we get that the ideals are given by 
\[
\raisebox{-1cm}[1.1cm]{}
\xymatrix@C-=1mm{
{\bullet}&&&&{}&&&\\
\bullet\ar@(lu,ld)[]\ar@(l,d)[]\ar[u]_(0.2){w_0}_(0.9){v_0}&&&&\circ\ar[llll]\ar[u]^(0.2){w_1}\ar[ur]\ar[urr]\ar[urrr]_(0.6){\dots}
}
\qquad
\xymatrix{
{\bullet}&{\circ}\\
\bullet\ar@(lu,ld)[]\ar@(l,d)[]\ar[u]_(0.2){w_0}_(0.9){v_0}&\circ\ar[l]\ar@{=>}[u]^(0.2){w_1}^(0.9){v_1}
}\bigskip
\]
and in case $\myviib$ by
\[
\xymatrix@C-=1mm{&&&&&&&&\\
&&{\circ}\ar[u]\ar[ur]\ar[urr]\ar[urrr]_(0.6){\dots}
&&&&&\\
\bullet\ar@(lu,ld)[]\ar@(l,d)[]\ar@{=>}[urr]_(0.3){w_0}^(1.1){v_0}&&&&\circ\ar[llll]\ar@{=>}[ull]^(0.3){w_1}&&&
}
\qquad
\xymatrix@C-=4mm{&\circ&\\
&{\circ}\ar@{=>}[u]_(0.9){\infty}
&\\
\bullet\ar@(lu,ld)[]\ar@(l,d)[]\ar@{=>}[ur]_(0.3){w_0}_(1.1){v_0}&&\circ\ar[ll]\ar@{=>}[ul]^(0.3){w_1}
}\bigskip
\]

\end{example}

In the previous example we saw how easy it was to compute a graph representation of  $I^\sim$, and since this is unital by construction we may apply our standard criterion to decide semiprojectivity of $I^\sim$. This also decides semiprojectivity of $I$ since Blackadar in \cite{bb:stc} proved that $I$ is semiprojective precisely when $I^\sim$ is. Formally:

\begin{theorem}\label{idealcor}
Let $E=(E^0,E^1,s,r)$ be a graph such that $E^0$ is finite and
consider $I\gidealof C^*(E,\mys)$ represented by the \nap $(H,\myr)$. 
Then $I$ is semiprojective precisely when none of the sets in
\[
\{\Omega_v\mid v\in H\cup(\myr\backslash\mys)\}\cup \{\Omega_\infty\}
\]
has (FQ) relative to $\mys$, where
\[
\Omega_\infty=\{v\in \myr\mid |\eta(v)|=\infty\}
\]
\end{theorem}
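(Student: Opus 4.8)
The plan is to reduce everything to the unital criterion already established. By the theorem of Blackadar recalled just above, $I$ is semiprojective exactly when its forced unitization $I^\sim$ is, and by Theorem~\ref{idealasgraph} we may identify $I^\sim$ with the graph algebra $C^*(G,\mys_G)$. Since $E^0$ is finite and the construction adds only the one vertex $\infty$ together with finitely many sink vertices (one for each element of a \emph{finite} set $\eta(v)$), the vertex set $G^0$ is finite, so Theorem~\ref{Thm:finite} applies: $C^*(G,\mys_G)$ is semiprojective iff for \emph{every} $u\in G^0$ the set $\Omega^G_u$ of vertices emitting infinitely many edges into $u$ in $G$ fails to have (FQ) relative to $\mys_G$. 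The content of the corollary is then to enumerate the sets $\Omega^G_u$ and to translate each (FQ)-question from $G$ back to $E$.

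First I would compute $\Omega^G_u$ case by case. If $u=\xi$ is one of the sink vertices coming from a finite $\eta(v)$, then $u$ receives a single edge, so $\Omega^G_u=\emptyset$ and it imposes no condition. If $u=\infty$, the edges of $G$ with range $\infty$ are precisely the infinite bundles $\eta(v)$, each emitted from its base point $v$; hence $\Omega^G_\infty=\{v\mid |\eta(v)|=\infty\}$, which I would identify with the set $\Omega_\infty$ of the statement. Finally, for $u\in H\cup(\myr\setminus\mys)$ the edges of $G$ into $u$ are exactly the edges of $E$ into $u$ whose source lies in $H$; since $u\in\myr$, admissibility condition~(ii) bounds $|uE^1\setminus E^1H|$, so no vertex outside $H$ can emit infinitely many edges to $u$, and therefore $\Omega^G_u=\Omega_u\subseteq H$. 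Thus the only sets that can carry (FQ) are exactly $\{\Omega_v\mid v\in H\cup(\myr\setminus\mys)\}\cup\{\Omega_\infty\}$, matching the list in the statement.

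It then remains to see that, for each such set, ``(FQ) relative to $\mys_G$ in $G$'' coincides with ``(FQ) relative to $\mys$ in $E$''. For the sets $\Omega_u\subseteq H$ this is a direct consequence of hereditariness: any path terminating in $H$ is contained in $H$, and the subgraph of $G$ supported on $H$ has the same edge set $HE^1$, the same cycle structure (hence the same partition into $E^0_0,E^0_1,E^0_2$) and the same regularity data as that of $E$, while $\mys_G\cap H=\mys\cap H$. Consequently the path counts and the (TQ)/(MQ)/(AQ) tests for $\Omega_u$ return identical verdicts in the two graphs.

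The delicate point, and the step I expect to be the main obstacle, is the vertex $u=\infty$. Here $\Omega_\infty$ may contain vertices of $\myr\setminus\mys$ lying outside $H$, and such a vertex receives edges only from $H$ inside $G$ but may receive further edges from outside $H$ in $E$, so a priori the number of paths ending in $\Omega_\infty$—and hence the (MQ)/(TQ) verdict—could differ between the two graphs. I would control this by decomposing, via hereditariness, any path into $\Omega_\infty$ as a path inside $H$ followed by at most one terminal edge landing in $\myr\setminus\mys$, using admissibility~(ii) to bound the edges entering $\myr\setminus\mys$ from $H$, and invoking the finiteness of $E^0$ (which already excludes (AQ), cf.\ Remark~\ref{FQremark}) to reduce the comparison to the (TQ) and (MQ) configurations, whose defining paths again live in $H$ where $G$ and $E$ agree; one must also reconcile the index sets $\myr$ and $H\cup(\myr\setminus\mys)$ appearing in $\Omega_\infty$, checking that the extra vertices of $\myr\cap\mys$ carry no (FQ). Assembling the equivalences over all $u\in G^0$ then yields that $I^\sim$, and therefore $I$, is semiprojective exactly when none of the listed sets has (FQ) relative to $\mys$.
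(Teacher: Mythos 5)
Your proposal follows exactly the paper's route: reduce to $I^\sim\simeq C^*(G,\mys_G)$ via Blackadar's theorem and Theorem~\ref{idealasgraph}, apply Theorem~\ref{Thm:finite}, and observe that the infinite receivers of $G$ are $\infty$ together with those of $H\cup(\myr\setminus\mys)$, with $\Omega_\infty$ being the relevant set for the former. The paper's proof consists of essentially only these two observations; the translation of the (FQ) tests from $G$ back to $E$, which you rightly single out as the remaining work (in particular the behaviour at $\infty$ and the index set $\myr$ versus $H\cup(\myr\setminus\mys)$) and sketch via hereditariness of $H$ and admissibility condition (ii), is left entirely implicit there.
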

\begin{proof}
By \cite{bb:stc} we may focus our attention on $C^*(G,\mys_G)$ which is again covered by our main theorem.
We see that the infinite receivers of $G$ are precisely $\infty$ and those of $H\cup \myr\setminus\mys$, and that the set of relevance in  Theorem \ref{Thm:finite} in the former case is precisely $\Omega_\infty$.
\end{proof}

\begin{example}\label{recexxiv}
Collecting the information in Examples \ref{recexxii} and \ref{recexxiii} we get
\begin{center}
\begin{tabular}{|c||c|c|c|c|c|c|}\hline
&$\myii$&$\myiii$&$\myv$&$\myvi$&$\myviia$&$\myviib$\\\hline\hline
$I$& SP&SP&$\lnot$SP&SP&SP&$\lnot$SP\\\hline
$C^*(E_\blacksquare)$& $\lnot$SP&$\lnot$SP&$\lnot$SP&SP&$\lnot$SP&$\lnot$SP\\\hline
$C^*(E_\blacksquare)/I$& $\C^2$&$\mathcal O_2\oplus\C$&$\C$&$\mathcal O_\infty$ &$\mathcal E_2$&$\mathcal O_2$\\\hline
\end{tabular}
\end{center}
\end{example}

\subsection{Extensions}

Blackadar conjectured (\cite[4.5]{bb:ssc}) that when an extension
\[
\xymatrix{
{0}\ar[r]&{I}\ar[r]&{A}\ar[r]&{\C}\ar[r]&0}
\]
splits, the implication
\[
I\text{ semiprojective }\Longrightarrow A\text{ semiprojective}
\] 
would hold. More generally, Loring (\cite{tal:lsppc}) asked whether, when $\dim F<\infty$, and
\[
\xymatrix{
{0}\ar[r]&{J}\ar[r]&{B}\ar[r]&{F}\ar[r]&0}
\]
we have that 
\[
J\text{ semiprojective }\Longleftrightarrow B\text{ semiprojective}
\] 
Enders (\cite{de:sfcsc}) recently proved that the backward implication
holds. 

We have already, however, in Example \ref{recexxiv} seen an example $\myii$ where the answer to Loring's question was negative, and with a little more work we will see that also Blackadar's conjecture fails. In fact, we saw in the table of Example \ref{recexxiv} that such a behavior also occurs when the quotient is $\mathcal O_2\oplus \C$ or the Cuntz-Toeplitz algebra $\mathcal E_2$.
We now give a complete description of when we may, and may not, infer from semiprojectivity of a gauge invariant ideal to semiprojectivity of the algebra itself, in such a unital extension. 

\begin{definition}\label{stardef}
Let $C^*(F,\mys)$ be a relative graph $C^*$-algebra. We say that $v_0,v_1\in F^0$ satisfy property $(*)$ when
\begin{enumerate}[(1)]
\item $v_1\not\in \mys$;
\item $v_0,v_1\not\in s(\Omega_{v_1}F^*)$; and
\item \begin{enumerate}[(a)]
\item $F^*v_0$ is infinite, or
\item $v_0\not=v_1$ and $v_0\not\in \mys$ 
\end{enumerate}
\end{enumerate}
\end{definition}

\begin{theorem}\label{bbwaswrong}
Let $C^*(F,\mys)$ be a unital relative graph algebra. The following are equivalent:
\begin{enumerate}[(i)]
\item Whenever a gauge invariant extension 
\[
\xymatrix{
{0}\ar[r]&{I}\ar[r]&{C^*(E,\mys_E)}\ar[r]&{C^*(F,\mys)}\ar[r]&0},
\]
given by an \nap $(H,\myr)$ with $E\backslash H=F$, $\myr\backslash H=\mys$ has $I$ is semiprojective, then so is $C^*(E,\mys_E)$.
\item $C^*(F,\mys)$ is semiprojective, and no pair of vertices $v_0,v_1\in F^0$ satisfy property $(*)$.
\end{enumerate}
\end{theorem}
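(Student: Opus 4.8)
The plan is to run both implications through the semiprojectivity criterion of Theorem \ref{Thm:finite}, applied to the total algebra $C^*(E,\mys_E)$, and through Theorem \ref{idealcor}, applied to the ideal $I$. Three structural observations organize everything. First, since $H$ is hereditary no edge of $E$ runs from $F=E\setminus H$ into $H$, so $F$ is forward-closed; writing $\Omega_v^F,\Omega_v^E$ for the sets of vertices emitting infinitely many edges to $v$ computed in $F$ resp.\ $E$, this gives $\Omega_v^E\subseteq H$ for $v\in H$ and $\Omega_v^E=\Omega_v^F\sqcup(\Omega_v^E\cap H)$ for $v\in F$. Second, we only consider extensions with $I$ semiprojective; by Proposition \ref{infeasy} this forces $H$, hence $E^0=F^0\sqcup H$, to be finite, so Property (AQ) never occurs (Remark \ref{FQremark}) and (FQ) means (TQ) or (MQ). Third, and crucially, if $C^*(F,\mys)$ is semiprojective then every $c\in\Omega_v^F$ satisfies $c\notin E^0_0$: otherwise the singleton $\{c\}\subseteq\Omega_v^F$ would have (MQ) at the singular vertex $c\notin\mys$, contradicting Theorem \ref{Thm:finite}. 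Thus each $c\in\Omega_v^F$ carries a cycle, and \emph{any} vertex that reaches $\Omega_v^F$ by a path does so by infinitely many paths.

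For (i)$\Rightarrow$(ii) I argue contrapositively. If $C^*(F,\mys)$ is not semiprojective, the trivial extension $E=F$, $H=\emptyset$, $\myr=\mys_E=\mys$ (an \nap since $\mys\subseteq F^0\reg$) has $I=0$ semiprojective while $C^*(E,\mys_E)=C^*(F,\mys)$ is not, so (i) fails. Otherwise a pair $(v_0,v_1)$ satisfies $(*)$, and I build a bad extension by adjoining a two-vertex gadget $H=\{w_0,w_1\}$: give $w_0$ two self-loops, so $w_0\in E^0_2$ and $p_{w_0}$ is properly infinite; add one edge $w_1\to w_0$ and infinitely many edges $w_1\to v_1$; and attach $w_0$ to $v_0$ by a single edge when $F^*v_0$ is infinite (case (3a)) or by infinitely many edges when $v_0\neq v_1$, $v_0\notin\mys$ (case (3b), where $v_0\notin\mys$ permits $v_0$ to be an infinite receiver). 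Setting $\myr=\mys\cup H$ and $\mys_E=\mys\cup(\{w_0\}\cap E^0\reg)$, condition (1) makes $(H,\myr)$ a legitimate \nap with quotient $C^*(F,\mys)$. Since $w_1$ carries no cycle and condition (2) forbids $v_0$ and $v_1$ from reaching $\Omega_{v_1}^F$, every path from $w_1$ into $\Omega_{v_1}^E=\Omega_{v_1}^F\cup\{w_1\}$ is trivial, so $\Omega_{v_1}^E$ has (MQ) at $w_1\notin\mys_E$ and $C^*(E,\mys_E)$ is not semiprojective. That $I$ is semiprojective I verify through Theorem \ref{idealcor}: $w_0$ is properly infinite and the attachment dictated by condition (3) makes the unitized ideal graph one whose relevant sets $\Omega_u$ and $\Omega_\infty$ carry no (FQ). Two vertices are genuinely needed: a single self-looping vertex emitting infinitely into $v_1$ would itself lie in $E^0_2$, contributing cycles that destroy the (MQ) count, whereas without $w_0$ the ideal would be $\bK$-like and not semiprojective.

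For (ii)$\Rightarrow$(i), assume $C^*(F,\mys)$ and $I$ semiprojective and no pair satisfies $(*)$; fix such an extension and suppose toward a contradiction that $\Omega_v^E$ has (FQ) for some $v$. If $v\in H$ then $\Omega_v^E\subseteq H$ and, since paths into $H$ stay in $H$, the (TQ)/(MQ) witness lies in $H$; matching against the ideal graph of Theorem \ref{idealasgraph}, whose edges into $H$ coincide with those of $E$, this yields (FQ) among the sets governing $I$, contradicting semiprojectivity of $I$. If $v\in F$, the third observation forbids the witness from reaching $\Omega_v^F$ at all (a single path there would force infinitely many), so the witness reaches only $\Omega_v^E\cap H$ and hence lies in $H$. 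When $v\in\mys$ (regular in $F$, possibly singular in $E$) the emission from $H$ into $v$ is absorbed inside $\myr$ by the $\eta$- and $\infty$-structure of the ideal graph and again contradicts semiprojectivity of $I$. When $v\notin\mys$ I set $v_1=v$, so condition (1) holds; tracing the witness forward through $H$ to the properly infinite part mandated by semiprojectivity of $I$ and out into $F$ produces a vertex $v_0$, and $(v_0,v_1)$ is seen to satisfy condition (2) (from finiteness of the path count together with the third observation) and condition (3) (from the dichotomy between exiting into a path-rich $v_0$, case (3a), and exiting by infinitely many edges into a distinct singular $v_0$, case (3b)). This contradicts the absence of $(*)$ pairs and closes the argument.

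The main obstacle is the last step of (ii)$\Rightarrow$(i): decoding an (FQ)-witness living in $H$ with exit into $v\in F\setminus\mys$ into a genuine $(*)$ pair, and symmetrically certifying that the gadget's ideal is semiprojective. Both require the explicit unitized ideal graph of Theorem \ref{idealasgraph}, in which exits from $H$ are replaced by the auxiliary vertices $\eta(v)$ and the adjoined vertex $\infty$; one must track carefully relative to which set, $\mys$ or $\mys_E$, each (TQ)/(MQ) condition is read, since a vertex of $F$ can be regular in $F$ yet singular in $E$. The delicate point is to show that this discrepancy never produces an (FQ)-witness unaccounted for by either $I$ or a $(*)$ pair, which is precisely what the calibration of conditions (2) and (3) in Definition \ref{stardef} achieves.
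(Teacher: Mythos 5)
Your (i)$\Rightarrow$(ii) direction is essentially the paper's argument: the trivial extension when $C^*(F,\mys)$ is not semiprojective, and otherwise the two-vertex gadget ($w_0$ with two loops, an edge $w_1\to w_0$, infinitely many edges $w_1\to v_1$, attachment of $w_0$ to $v_0$ calibrated by cases (3a)/(3b)); your verification that $\Omega_{v_1}$ acquires (MQ) at $w_1$ is correct. The fatal problem is your ``third observation,'' on which the whole (ii)$\Rightarrow$(i) direction rests: that semiprojectivity of $C^*(F,\mys)$ forces every $c\in\Omega_v^F$ to lie outside $E^0_0$, so that any vertex reaching $\Omega_v^F$ by one path does so by infinitely many. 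This is false. Its justification already misreads the paper's conventions: singularity is defined by what a vertex \emph{receives} ($vE^1=r^{-1}(v)$), so a vertex $c$ emitting infinitely many edges into $v$ makes $v$ singular, not $c$; there is no reason to have $c\notin\mys$, and in any case a singleton subset of $\Omega_v^F$ having (MQ) does not give (MQ) for $\Omega_v^F$ itself. Concretely, take $F^0=\{a,c,v\}$ with two loops at $a$, one edge from $a$ to $c$, infinitely many edges from $c$ to $v$, and $\mys=\{a,c\}$. Then $\Omega_v=\{c\}$ fails (FQ) relative to $\mys$ (the only vertex outside $\mys$ is $v$, which emits no path to $c$, and (TQ) fails at $a$ because the two first-return cycles produce infinitely many paths avoiding either one), so $C^*(F,\mys)$ is semiprojective; yet $c\in\Omega_v^F\cap E^0_0$ and $c$ reaches $\Omega_v^F$ by exactly one path.

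Because that observation is load-bearing, your step ``the witness cannot reach $\Omega_v^F$ at all, hence lies in $H$'' collapses, and with it the reduction of (ii)$\Rightarrow$(i) to data visible in the ideal graph alone. The paper's actual proof of this direction is a genuine case analysis: for $v\in\mys$ it uses that $\Omega_v\neq\emptyset$ forces $v\in\myr\setminus H$ and invokes Theorem \ref{idealcor}, with a separate argument through $\Omega_\infty$ when $F^*v$ is finite; for $v=v_1\notin\mys$ it first disposes of the case $v_1\in s(\Omega_{v_1}F^*)$, then uses the absence of $(*)$-pairs to show every other vertex $v_0$ satisfies either $v_0\in s(\Omega_{v_1}F^*)$ or ($v_0\in\mys$ and $F^*v_0$ finite), deduces from this that $\Omega_{v_1}\cap H\subseteq\Omega_\infty$ and that every $w\in\Omega_\infty\setminus\Omega_{v_1}$ emits a path to $\Omega_{v_1}$, and finally derives a contradiction between a putative (MQ)/(TQ) witness for $\Omega_{v_1}$ and the known failure of (FQ) for $\Omega_\infty$ coming from semiprojectivity of $I$. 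Your sketch compresses all of this into one sentence (``tracing the witness forward \dots is seen to satisfy condition (2) \dots and condition (3)''), which is precisely the content that has to be proved. A minor additional point: finiteness of $E^0$ does not follow from semiprojectivity of $I$ via Proposition \ref{infeasy}, which concerns Toeplitz algebras; it is part of the standing assumption that the extension is unital.
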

\begin{proof}
We prove (i)$\Longrightarrow$ (ii) by contraposition. If $C^*(F,\mys)$ is not semiprojective, then
\[
\xymatrix{
{0}\ar[r]&{0}\ar[r]&{C^*(F,\mys)}\ar[r]&{C^*(F,\mys)}\ar[r]&0}
\]
is a counterexample establishing the negation of (ii). When $C^*(F,\mys)$ is semiprojective, we may choose a pair of vertices $v_0$ and $v_1$ satisfying $(*)$ and produce a counterexample as follows, taking the lead from Examples \ref{recexxiii} and \ref{recexxiv}. Indeed, we will produce $E$ by adding  the graph $H$ given as there by
\[
\xymatrix{
\bullet\ar@(lu,ld)[]\ar@(l,d)[]&\circ\ar[l]_(0.8){w_0}_(0.2){w_1}}\bigskip
\]
In case (3b) of (*) we add infinitely many edges from $w_0$ to $v_0$ and infinitely many edges from $w_1$ to $v_1$ (as in our example $\myii$) and in case (3a) we add a single edge from $w_0$ to $v_0$ and infinitely many edges from $w_1$ to $v_1$ (as in our examples $\myii,\myiii$ and $\myviia$). With $E$ thus defined, we set $\mys_E=\mys\cup\{w_0\}$ as indicated.

Employing our assumption (1), we see that in any case $(H,H\cup \mys)$ becomes an \nap since $v_i$ will only receive infinitely in the case that $v_i\not\in \mys$,  and as in Example  \ref{recexxiii} we get that the ideal thus defined is semiprojective. Hence we now only need to prove that $C^*(F,\mys_F)$ is not semiprojective. To see this, note that $w_1\in \Omega_{v_0}$ and that by our assumption (2), $w_0\not\in s(\Omega_{v_0}F^*)$. Thus $w_1$ remains in \textsc{FindFQ}$^{\circ n}(\Omega_{v_0},\emptyset)$ for all $n$, and we see that \textsc{ProperlyInfinite}$(\Omega_{v_0})$ can not return $(\star,\star)$. Consequently, $p_{\Omega_{v_0}}$ is not properly infinite, and Theorem \ref{Thm:finite} applies.

In the other direction, suppose no pair of vertices has property (*) in $C^*(F,\mys)$, and that $C^*(F,\mys)$ as well as $I$ are semiprojective. We must check that $\Omega_v$ fails to have (FQ) for every $v\in E$. This follows directly from the semiprojectivity of $I$ for any $v\in H$, so we now only consider the case of $v\in F$.  Consider first the case $v\in \mys$. Assuming that $\Omega_v\not=\emptyset$, we would of course have that $v\in E^0\sing$ and hence that  $v\in\myr\backslash H$, and by our criterion deciding in Theorem \ref{idealcor} that $I$ is semiprojective, further that $\Omega_v$ does not have (FQ). In the case when further $F^*v$ is finite, we may in fact conclude that $\Omega_v=\emptyset$, for if not we would have that $v\in\Omega_\infty$ with $v\in H^0_0\backslash \mys$ contained in every top of the set, just as in the case $\myviib$.

Thus we may focus on the case $v_1\not \in \mys$. If $v_1F^* w_1\not=\emptyset$, $w_i\in \Omega_{v_1}$, we get that $\{w_1\}$ is a top for $\Omega_{v_1}$, and that $w_1\in E^0_2$. Hence also here we get that $\Omega_{v_1}$ does not have (FQ), and we may now assume that $v_1\not\in s(\Omega_{v_1}F^*)$. Since we know that $(v_1,v_1)$ does not have (*) we conclude that $F^*v_1$ is finite, and since $(v_0,v_1)$ does not have (*) for any other $v_0$ we conclude that one of
\begin{enumerate}[(A)]
\item $v_0\in s( \Omega_{v_1}F^*)$
\item $v_0\in \mys$ and $F^*v_0$ is finite
 \end{enumerate}
holds. We now note that $\Omega_{v_1}\cap H\subseteq \Omega_\infty$, and that for any $w\in \Omega_\infty\backslash \Omega_{v_1}$, we have $w\in \Omega_{v_1}F^*$. Indeed, as we have seen above, it is not possible to find an infinite number of paths starting at $w$, leaving $H$ after one step, and visiting only vertices in case (B), since they have finite futures and can only be reached by an edge with finite multiplicity. It is also not possible to find an infinite number of paths via $v_1$, so at least one vertex from our case (A) must be visited, and then of course there is a path to $\Omega_v$ as well.

Suppose now for contradiction that $\Omega_{v_1}$ has (MQ), so that $x\not\in \mys_E$ and that $\Omega_vE^*x$ is a finite, nonempty set. If we had $(\Omega_v\cap H)E^*x=\emptyset$ then we would have $(\Omega_v\cap F)E^*x\not=\emptyset$ and consequently  $x\in \Omega_\infty$. Thus, in any case, $\Omega_\infty E^*x\not=\emptyset$. But since we know that $\Omega_\infty$ does not have (MQ), we must have that $\Omega_\infty E^*x$ as well as $(\Omega_\infty\backslash \Omega_{v_1}) E^*x$ infinite. We have, however, seen that $\Omega_{v_1}E^*(\Omega_\infty\backslash \Omega_{v_1})\not=\emptyset$, so this provides the desired contradiction. Proving that (TQ) does not occur follows similarly.
\end{proof}

It is easy to check that when $C^*(F,\mys)$ is gauge simple or regular (i.e. $F^0=F^0\reg=\mys)$ -- in particular, if it is a Cuntz-Krieger algebra -- then it has the properties of (ii) above, and that  when
\begin{itemize}
\item $\mys\not=F^0\reg$; or
\item $|F^0|>1$, and $F$ has a source which is also a sink
\end{itemize}
it has not. In particular, as we noted above, there are counterexamples to Loring's question even for $\C^2$, but not for $\C$. This would seem to corroborate Blackadar's conjecture, but in fact -- as was well known to him -- the fact that a unital extension by $\C$ preserves semiprojectivity follows directly from the observation that we have already used from \cite{bb:ssc}. It is interesting to note that for unital extensions in our setting, if the quotient is $M_2(\C)$, $\mathcal{O}_2$, $\mathcal{O}_2\oplus \mathcal{O}_2$, or $\mathcal{O}_\infty$, we may infer from the semiprojectivity of the ideal to semiprojectivity in the middle.  In particular, we get by combining our results with those of \cite{de:sfcsc}:

\begin{corollary}
When 
\[
\xymatrix{
{0}\ar[r]&{I}\ar[r]&{C^*(E,\mys)}\ar[r]&{M_n(\C)}\ar[r]&0}
\]
is exact and unital with $I$ a gauge invariant ideal, we have that
\[
I\text{ semiprojective }\Longleftrightarrow C^*(E,\mys)\text{ semiprojective}
\] 
\end{corollary}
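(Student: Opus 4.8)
The plan is to prove the two implications separately. For the forward direction ``$I$ semiprojective $\Longrightarrow C^*(E,\mys)$ semiprojective'' I would run everything through Theorem \ref{bbwaswrong}, and for the converse I would appeal to Enders' theorem on finite-dimensional quotients.

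For the forward direction, since $I$ is gauge invariant the quotient is presented, via the ideal structure theorem, as a unital relative graph algebra $C^*(F,\mys_F)\cong M_n(\C)$, so Theorem \ref{bbwaswrong} applies with this fixed quotient and it suffices to verify condition (ii) there. Now $M_n(\C)$ is finite-dimensional, hence semiprojective, and it is simple, hence in particular gauge simple; so the remark following Theorem \ref{bbwaswrong} already guarantees that no pair of vertices of $F$ satisfies property $(*)$, which is the other half of (ii).

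To make this self-contained---and because it is the crux of the argument---I would check the failure of $(*)$ directly from the graph structure. Finite-dimensionality forces $F$ to be acyclic and to have no infinite receivers, so every set $F^*v_0$ is finite and every $\Omega_v$ is empty. Counting minimal central summands shows they are in bijection with the sources of the standardized graph $\makestandard{F}{\mys_F}$, i.e.\ with the genuine sources of $F$ together with one extra source for each regular vertex outside $\mys_F$; simplicity forces this count to be $1$, and since a finite acyclic graph always has a source this is possible only when $F$ has a unique source $s_0$ and $\mys_F=F^0\reg$. Consequently $s_0$ is the only vertex outside $\mys_F$. In property $(*)$, clause (1) then pins $v_1=s_0$, while both alternatives of clause (3) fail: (3a) because $F^*v_0$ is finite, and (3b) because there is no second vertex $v_0\neq s_0$ outside $\mys_F$. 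Hence no pair satisfies $(*)$, and Theorem \ref{bbwaswrong}, (ii)$\Longrightarrow$(i), yields $I$ semiprojective $\Longrightarrow C^*(E,\mys)$ semiprojective.

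For the backward direction, ``$C^*(E,\mys)$ semiprojective $\Longrightarrow I$ semiprojective'', I would simply cite \cite{de:sfcsc}: since the quotient $M_n(\C)$ is finite-dimensional, semiprojectivity of the ambient algebra descends to the ideal. I expect the only genuine work to be the structural verification in the forward direction---translating the simplicity of $M_n(\C)$ into the assertion that its defining graph has a single source which exhausts the vertices outside $\mys_F$, this being exactly what rules out clause (3b) of property $(*)$ (a subtlety already visible in the fact that two sources produce a non-simple algebra such as $M_2(\C)\oplus M_2(\C)$, which does admit a pair satisfying $(*)$).
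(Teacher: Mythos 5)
Your proposal is correct and follows the paper's own route: the paper obtains this corollary exactly by combining Theorem \ref{bbwaswrong} (via the observation that a gauge simple quotient satisfies condition (ii), which you verify directly by showing the graph presenting $M_n(\C)$ is finite, acyclic, has no infinite receivers, has all regular vertices in $\mys_F$, and has a unique source, so that property $(*)$ fails) with Enders' result from \cite{de:sfcsc} for the backward implication. Your explicit graph-theoretic verification of the failure of $(*)$ is a correct fleshing-out of what the paper leaves as "easy to check."
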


It is conceivable that the corollary holds without the condition that $I$ is a gauge invariant ideal.
If we allow non-unital extensions the situation deteriorates completely:

\begin{corollary}\label{bbwaswrongagain}
Suppose $C^*(F,\mys)$ is a unital graph $C^*$-algebra with $|F^0|>0$. There exists a graph $E$ with $|E^0|=\infty$ and a gauge invariant ideal $I\gidealof C^*(E,\mys)$ such that 
$C^*(E,\mys)/I= C^*(F,\myr)$,
\[
\xymatrix{
{0}\ar[r]&{I}\ar[r]&{C^*(E,\mys)}\ar[r]&{C^*(F,\myr)}\ar[r]&0}
\]
splits, $I$ is {semiprojective}, and $C^*(E,\mys)$ is not {semiprojective}. 
\end{corollary}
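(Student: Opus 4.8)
The plan is to build $E$ from three parts — the given $F$ (which becomes the quotient), a finite ``obstructing gadget'' of the type appearing in Example \ref{recexx} and in the proof of Theorem \ref{bbwaswrong}, and an infinite ``reservoir'' whose sole purpose is to force $E^0$ to be infinite — arranged so that the failure of semiprojectivity is concentrated in the finite part while the ideal stays semiprojective. For the reservoir I would take a graph $G_B$ with $G_B^0$ infinite whose algebra $B=C^*(G_B)$ is simple and purely infinite with finitely generated $K$-theory; such a $B$ is semiprojective by Spielberg's theorem \cite{js:scpic}. Placing $G_B$ as a connected component of $E$ disjoint from everything else gives a direct-sum decomposition $C^*(E,\mys)\cong C^*(E',\mys_{E'})\oplus B$ with $E'$ finite. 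Since $B$ is semiprojective and semiprojectivity passes to retracts, $C^*(E,\mys)$ is semiprojective iff $C^*(E',\mys_{E'})$ is; the analogous splitting $I\cong I'\oplus B$ reduces semiprojectivity of the ideal to that of the finite gadget ideal $I'$.

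For the gadget I would reproduce the configuration of $\myii$--$\myiii$: adjoin $w_0$ with two loops and $w_0\in\mys$, a source $w_1\notin\mys$ with an edge $w_1\to w_0$, and infinitely many edges $w_0\Rightarrow v_0$, $w_1\Rightarrow v_1$ for suitable vertices $v_0,v_1\in F^0$. The role of the two loops is that in the unitisation $(I')^{\sim}$ all the adjoined infinite edges collapse onto a single infinite receiver $\infty$ (Theorem \ref{idealasgraph}), so that $\Omega_\infty=\{w_0,w_1\}$ does \emph{not} have $(FQ)$: here $p_{w_0}$ is properly infinite and $p_{w_1}\precsim p_{w_0}$ through the edge $w_1\to w_0$, whence $p_{w_0}+p_{w_1}$ is properly infinite by Corollary \ref{powerc}. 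Thus $(I')^{\sim}$, and hence $I'$ and $I$, are semiprojective (Theorem \ref{Thm:finite} together with \cite{bb:stc}), while the quotient $C^*(E,\mys)/I\cong C^*(F,\mys)$ is semiprojective by hypothesis. In the finite algebra $C^*(E',\mys_{E'})$, by contrast, the two targets no longer merge, so $\Omega_{v_1}=\{w_1\}$ \emph{does} have $(FQ)$ — the source $w_1$, lying in $E^0_0\setminus\mys$, forces \textsc{FindFQ} to return $(\star,\star)$ — and by Theorem \ref{Thm:finite} this algebra is not semiprojective; therefore neither is $C^*(E,\mys)$ (cf.\ Example \ref{recexxiv}).

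The remaining, and in my view decisive, point is to split the extension, i.e.\ to split the finite extension $0\to I'\to C^*(E',\mys_{E'})\to C^*(F,\mys)\to 0$ (a section into the finite summand then splits the full extension). When the targets $v_0,v_1$ can be taken outside $\mys$ this is free: the subgraph embedding of Lemma \ref{subgraph} realises $C^*(F,\mys)$ inside $C^*(E',\mys_{E'})$ and splits the quotient, since no \CK{3} relation at $v_0$ or $v_1$ has to be matched. The hard case is when a target $v_i\in\mys$, regular in $F$, is turned into an infinite receiver by the adjoined edges and so must be re-capped by \CK{3} in the quotient; then the naive subgraph copy violates \CK{3} at $v_i$. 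I would construct the section by hand, keeping $\sigma(p_v)=p_v$ and enlarging the partial isometries carrying the edges into $v_i$ so as to absorb the excess range $p_{v_i}-\sum_{e\in v_iF^1}s_es_e^*\in I$, which is invisible in the quotient; the engine for this is precisely Proposition \ref{XX}, valid once the relevant projection is properly infinite. The subtlest situations are the single-vertex quotients — $\C$, $C(\mathbb{T})$, $\mathcal{O}_n$ — where the two targets necessarily coincide, the gadget above degenerates, and a stably finite capped vertex cannot be made properly infinite without altering $F$; there the reservoir must instead be integrated into the ideal, the excess range routed into its purely infinite part so that the index obstruction to lifting the capped relation vanishes, and the failure of semiprojectivity extracted from the infinitude of $E^0$ itself. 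This last point is where I expect the genuine work to lie.
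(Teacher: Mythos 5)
Your construction diverges from the paper's at two points, and both divergences open real gaps. The decisive one is the gadget: you aim \emph{both} infinite edge bundles at vertices of $F$, whereas the paper attaches $w_0$ to a single $v_0\in F^0$ (by one edge or infinitely many, according to whether $F^*v_0$ is infinite or $v_0\notin\mys$) and sends the entire bundle out of $w_1$ to a \emph{freshly added} vertex $v_1\notin F^0$. With your gadget the obstruction evaporates exactly in the cases you flag as "subtle". If $|F^0|=1$ the two targets coincide at some $v$, so $\Omega_v\supseteq\{w_0,w_1\}$; since $w_1$ emits an edge to $w_0$ and $w_0\in E^0_2$, \textsc{GetTop} discards $w_1$ in favour of $w_0$, \textsc{FindFQ} returns $(\emptyset,\{w_0\})$, $p_{\Omega_v}$ is properly infinite, and your finite middle algebra is \emph{semiprojective} by Theorem~\ref{Thm:finite}. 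Thus the construction proves nothing for quotient $\C$, $C(\T)$, $\mathcal O_n$ or $\mathcal O_\infty$ --- and quotient $\C$ is precisely Blackadar's conjecture, the case the corollary exists to kill. The same collapse can occur for larger $F$ whenever the chosen $v_1$ already lies downstream of a doubly-cyclic vertex or receives infinitely within $F$. Your proposed remedy ("integrate the reservoir into the ideal, route the excess range\dots") is a description of a difficulty, not a construction. The paper's fresh sink $v_1$ guarantees $\Omega_{v_1}=\{w_1\}$ in every case, with $w_1$ a source outside $\mys$ admitting exactly one path to itself, i.e.\ (MQ), so the obstruction can never be absorbed.

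The second divergence is how $|E^0|=\infty$ is achieved. Your disjoint reservoir requires knowing that a direct sum of two \emph{non-unital} semiprojective $C^*$-algebras is semiprojective ($I'$ and $B$ are both non-unital here, so Loring's one-unital-summand result does not apply); this closure property is nowhere available in the paper and is delicate in its own right, since the two coordinate lifts need not have orthogonal ranges. The paper needs no reservoir: it forms the finite-vertex auxiliary graph $E'$, takes nested gauge-invariant ideals $I\gidealof J\gidealof C^*(E',\mys_{E'})$ with $J$ omitting only $v_1$, checks $J/I\simeq C^*(F,\mys)$ with $I$ semiprojective and $J$ not (via $\Omega_\infty=\{w_1\}$ and Theorem~\ref{idealcor}), and then \emph{realizes $J$ itself} as $C^*(E,\mys_E)$ by Theorem~\ref{idealasgraph}; the infinitely many edges from $w_1$ into the exterior vertex $v_1$ automatically force $|E^0|=\infty$, so the infinitude is intrinsic to the counterexample rather than padded on. Finally, your treatment of the splitting when a target lies in $\mys$ is explicitly deferred; note that the paper's dichotomy in choosing $v_0$ (either $v_0\notin\mys$, so the subgraph copy of Lemma~\ref{subgraph} already splits, or $F^*v_0$ infinite, so only one edge is added and $v_0$ stays regular) is designed precisely to keep that step under control, whereas your version turns both $v_0$ and $v_1$ into infinite receivers and makes it strictly harder.
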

\begin{proof}
We may choose $v_0\in F^0$ so that either $v_0\not\in \mys$ or $F^*v_0$ is infinite. We now construct the graph $E'$ by adding the graph 
\[
\xymatrix{
\bullet\ar@(lu,ld)[]\ar@(l,d)[]&\circ\ar[l]_(0.8){w_0}_(0.2){w_1}\ar@{=>}[r]^(1.2){v_1}&\circ}\bigskip
\]
to $F$, placing one edge from $w_0$ to $v_0$ when $F^*v_0$ is infinite, or infinitely many edges from $w_0$ to $v_0$ when it is not. Considering the ideals $I,J$ given by the \naps
\[
(\{w_0,w_1\},\mys)\qquad
(F\cup\{w_0,w_1\},\mys)
\]
we get that $J/I\simeq C^*(F,\mys)$ with $I$ being semiprojective and $J$ failing to be so. We then choose $(E,E_\mys)$ realizing $J$ according to Theorem \ref{idealasgraph}. Noting that there are elements in $C^*(E,\mys)$ satisfying the Cuntz-Krieger relations for $C^*(F,\myr)$ we get the stipulated splitting map. 
\end{proof}

Our method in fact exclusively allows the construction of split extensions as counterexamples to Blackadar's conjecture. Adam S\o rensen proved in \cite{apws:ccb} that there are also non-split counterexamples to Blackadar's question. 

\subsection{Corners}

Blackadar proved in \cite{bb:stc} that any unital and full corner of a semiprojective $C^*$-algebra must again be semiprojective.
As our final order of business, we disprove by example (essentially, the same as the one given in the previous section) that Blackadar's conjecture that passage of semiprojectivity to corners remains automatic without the unitality condition. Intriguingly, we may instead prove that semiprojectivity passes from unital graph algebras to its unital corners, i.e., that Blackadar's result -- in this particular case -- remains true without the fullness condition. This will allow a new characterization of semiprojectivity in this case.

\begin{example}\label{bbwasagainnotentirelyright}
We saw in Example \ref{recexxiii}
that an ideal $I\gidealof C^*(\myii)$ was given by the graph
\[\raisebox{-2cm}[0.3cm]{}
\xymatrix@C-=1mm{
&&&\bullet&&&&&&&\\
&&&\bullet\ar@(lu,ld)[]\ar@(l,d)[]\ar[u]_(0.2){w_0}_(0.8){v_0}\ar[ul]\ar[ull]\ar[ulll]^(0.6){\dots}&&&\circ\ar[lll]\ar[u]^(0.2){w_1}\ar[ur]\ar[urr]\ar[urr]\ar[urrr]_(0.6){\dots}&&&
}
\]
and appealing to Theorem \ref{cornerisgraph} with $V=\{w_0,w_1,v_1\}\cup s(\myii^1w_1)$ we get a full corner $p_VIp_V$ which is the graph algebra given by
\[
\raisebox{-2cm}[0.3cm]{}
\xymatrix@C-=1mm{
{\bullet}&&&&{}&&&\\
\bullet\ar@(lu,ld)[]\ar@(l,d)[]\ar[u]_(0.2){w_0}_(0.9){v_0}&&&&\circ\ar[llll]\ar[u]^(0.2){w_1}\ar[ur]\ar[urr]\ar[urrr]_(0.6){\dots}
}
\]
seen also in Example \ref{recexxiii} to define an ideal of $C^*(\myv)$. By our observations in Example \ref{recexxiv}, $p_VIp_V$ fails to be semiprojective whilst $I$ is semiprojective. 
\end{example}

\begin{proposition}\label{cornissp}
When $E^0$ is finite and $C^*(E,\mys)$ is semiprojective, then so is $pC^*(E,\mys)p$ for any  $p\in C^*(E,\mys)$.
\end{proposition}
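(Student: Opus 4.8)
The plan is to realize $pC^*(E,\mys)p$, up to isomorphism, as a unital \emph{full} corner of an honest (finite) relative graph algebra that is itself semiprojective, and then to invoke Blackadar's theorem that a unital full corner of a semiprojective $C^*$-algebra is again semiprojective \cite{bb:stc}. The whole point is that, although semiprojectivity is not inherited by arbitrary corners (Example \ref{bbwasagainnotentirelyright}), the \emph{fullness} afforded by the construction below lets Blackadar's result apply.

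First I would reduce $p$ to a model projection. By Theorem \ref{amp} we have $p\sim\bigoplus_{i=1}^n\pv{v_i}{X_i}$ in $C^*(E,\mys)\otimes\bK$. To remove the multiplicities among the $v_i$, pass to $M_n(C^*(E,\mys))$, which is again a relative graph algebra $C^*(E',\mys')$ on a finite graph $E'$ and which stays semiprojective since semiprojectivity is preserved under matrix amplification. In $C^*(E',\mys')$ the image $p\oplus 0$ is Murray--von Neumann equivalent to a genuine model projection $\pWY$ with distinct vertices, and since equivalent projections determine isomorphic corners, $pC^*(E,\mys)p\cong\pWY C^*(E',\mys')\pWY$. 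Applying Theorem \ref{Thm:finite} to the semiprojective $C^*(E',\mys')$ records that every $\Omega_v$ computed in $E'$ fails (FQ).

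Next I would manufacture the ambient graph algebra. Running $(V,\XV)=\textsc{HModel}(W,\YW)$ of Proposition \ref{passto} produces a model meeting the hypotheses of Proposition \ref{cornerisgraph} with $\pWY\le\pVX$, with $\pWY$ and $\pVX$ generating the same ideal, and with $[\pVX]\le m[\pWY]$. Proposition \ref{cornerisgraph} then identifies $B:=\pVX C^*(E',\mys')\pVX$ with a relative graph algebra $C^*(F,\mys_F)$ on a \emph{finite} graph $F$ with vertex set $V=W\cup H$, in which every edge emanates from the hereditary set $H$. Because $\pWY\le\pVX=1_B$ and $\pWY$ generates the same ideal as $\pVX$, the projection $\pWY$ is full in the unital algebra $B$, and $\pWY B\pWY=\pWY C^*(E',\mys')\pWY\cong pC^*(E,\mys)p$. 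Thus $pC^*(E,\mys)p$ is a unital full corner of $B$, and it remains only to prove that $B$ is semiprojective.

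By Theorem \ref{Thm:finite}(v) this amounts to checking that $p_{\Omega_u^F}$ is properly infinite in $B$ for every $u\in F^0$. Since $p_{\Omega_u^F}\le\pVX=1_B$, proper infiniteness in $B$ is equivalent to proper infiniteness of $p_{\Omega_u^F}$ in $C^*(E',\mys')$, as this property depends only on the corner $p_{\Omega_u^F}Bp_{\Omega_u^F}=p_{\Omega_u^F}C^*(E',\mys')p_{\Omega_u^F}$ (Definition \ref{pidef}). Every edge of $F$ starts in $H$, so any path of $F$ landing in $\Omega_u^F\subseteq H$ stays inside $H$ and is literally a path of $E'$; for $u\in H$ the hereditary condition forces $\Omega_u^F=\Omega_u^{E'}$, so $p_{\Omega_u^F}$ is properly infinite directly from semiprojectivity of $C^*(E',\mys')$. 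The hard part, which I expect to be the main obstacle, is the case of the sink vertices $u\in V\setminus H$: here $\Omega_u^F=\Omega_u^{E'}\cap H$ is only a \emph{subset} of $\Omega_u^{E'}$, and proper infiniteness of $p_{\Omega_u^{E'}}$ does not descend to sub-sums. I would argue by contradiction through the (FQ) criterion (Theorem \ref{Thm:notPI}): a (TQ), (MQ) or (AQ) configuration for $\Omega_u^F$ is, by hereditarity of $H$, confined to $H$ and hence is a configuration of $E'$; the delicate step is to promote it to a genuine (FQ) configuration for some vertex-indexed $\Omega_v^{E'}$ by controlling the paths that escape $H$ and reach $\Omega_u^{E'}\setminus H$, thereby contradicting the fact that no $\Omega_v^{E'}$ has (FQ). Once $B$ is seen to be semiprojective, Blackadar's unital-full-corner theorem \cite{bb:stc} finishes the argument.
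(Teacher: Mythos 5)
Your overall route is the same as the paper's: reduce $p$ to a model projection, upgrade the model via Proposition \ref{passto} so that Proposition \ref{cornerisgraph} realizes the corner as a relative graph algebra $C^*(F,\mys_F)$, invoke Blackadar's unital-full-corner theorem from \cite{bb:stc}, and then verify the (FQ)/proper-infiniteness criterion of Theorem \ref{Thm:finite} for $F$. Before the main point, one repairable misstep: the claim that after passing to $M_n(C^*(E,\mys))$ the projection $p\oplus 0$ becomes Murray--von Neumann \emph{equivalent} to a multiplicity-free model projection is not justified and fails in general. The only mechanism for merging two summands based at the same vertex is the relation $p_{v,X}\oplus p_{v,Y}\sim p_{v,X\cap Y}\oplus p_{v,X\cup Y}$, which never removes the duplication; accordingly Lemma \ref{stdform} delivers only the two-sided domination $[\pVX]\leq[p]\leq n[\pVX]$. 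This is harmless for your strategy, since two-sided domination already exhibits $pC^*(E,\mys)p$ as a unital full corner of a matrix amplification of $\pVX C^*(E,\mys)\pVX$, which is all that \cite{bb:stc} requires, but the equivalence should not be asserted.

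The genuine gap is the step you yourself flag as ``the hard part'' and leave unresolved: proper infiniteness of $p_{\Omega_u}$ computed in $F$ for the sink vertices $u\in V\setminus H$, where you fear that the $\Omega$-set computed in $F$ is a proper subset of the one computed in the ambient graph and sketch an unfinished ``promotion'' argument through escaping paths. In fact no such argument is needed, because the resolution is built into the definition of a model: for $u\in V\setminus H$ Proposition \ref{cornerisgraph} requires $X_u=uE^1(E^0\setminus H)$, and the sets $X_u$ of a model are by definition \emph{finite}. Hence only finitely many edges enter $u$ from outside $H$, so no vertex outside $H$ can emit infinitely many edges into $u$; that is, the $\Omega$-set of $u$ is already contained in $H$ in the ambient graph, and the two $\Omega$-sets coincide for every vertex of $F$. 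Since $H$ is hereditary and $F$ retains all edges of $E$ with range in $H$, every path terminating in such an $\Omega$-set lies entirely in $H$ and uses only edges of $F$, so the (TQ), (MQ) and (AQ) configurations are literally the same in $F$ as in the ambient graph, and semiprojectivity of $C^*(E,\mys)$ rules them out via Theorem \ref{Thm:finite}. This is exactly how the paper concludes; with this observation your proof closes, but as written it does not.
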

\begin{proof}
Fix a model $(V,\XV)$ with $[\pVX]\leq [p]\leq n[\pVX]$. We may assume without loss of generality that the model satisfies the conditions of Proposition \ref{cornerisgraph} because of Proposition \ref{passto}, and since by \cite{bb:stc} semiprojectivity of $p C^*(E,\mys)p$ will follow form semiprojectivity of $\pVX C^*(E,\mys)\pVX$ we see that we only need to check that $C^*(F,\mys\cap F)$ is semiprojective, where $F$ is the subgraph given by restricting to 
a vertex set $F^0$ of the form $H\sqcup B$ where $H$ is hereditary in $E$, and $B$ is a set of sinks in $F$ which are not mutually connected.

By Theorem \ref{Thm:finite} we know that for any $v\in E^0$, ${\Omega_v}$ does not have (FQ) in $E^0$ relative to $\mys\cap F$. 
For any $v\in F^0$, we need to check that $\{w\in F^0\mid $ does not have (FQ) in $F$ relative to $\mys$. Note first that $\widetilde{\Omega_v}=\{w\in F^0\mid |wF^1v|=\infty\}$ is always contained in $H$, since no edges end in $B$. And since $H$ is hereditary in $E^0$, we have that $\widetilde{\Omega_v}=\Omega_v$, and that the set does not have (FQ) even in $F$. The desired conclusion follows by Theorem \ref{Thm:finite} again.
\end{proof}

We then add

\begin{theorem}\label{Thm:finiteII}
The conditions (i)--(v) of Theorem \ref{Thm:finite} are equivalent to
\begin{enumerate}[(i)]\addtocounter{enumi}{5}
\item Any corner $p C^*(E,\mys)p$ with $p\in C^*(E,\mys)$ is semiprojective
\item For any ideal $I\gidealof C^*(E,\mys)$, any corner $p(C^*(E,\mys)/I)p$ with $p\in C^*(E,\mys)/I$ is semiprojective
\item No pair of ideals $I\gidealof J\gidealof C^*(E,\mys)$ has $J/I$ Morita equivalent to $\K^\sim$ or $(C(\T)\otimes \bK)^\sim$
\end{enumerate}
\end{theorem}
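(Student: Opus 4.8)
The plan is to splice the three new conditions into the chain of Theorem \ref{Thm:finite}, proving first the cheap equivalences (i) $\Leftrightarrow$ (vi) $\Leftrightarrow$ (vii) and then closing the circle through (i) $\Rightarrow$ (viii) $\Rightarrow$ (i). Since $E^0$ is finite, $C^*(E,\mys)$ is unital with unit $p_{E^0}$, so choosing $p=p_{E^0}$ gives (vi) $\Rightarrow$ (i) and choosing $I=0$ gives (vii) $\Rightarrow$ (vi). The implication (i) $\Rightarrow$ (vi) is precisely Proposition \ref{cornissp}. For (i) $\Rightarrow$ (vii), note that for a gauge-invariant ideal $I$ Proposition \ref{qissp} makes the quotient $C^*(E,\mys)/I \cong C^*(E\setminus H,\myr\setminus H)$ semiprojective; this is again a relative graph algebra on a finite vertex set, so Proposition \ref{cornissp} applies to it and every corner of the quotient is semiprojective.

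Next I would prove (i) $\Rightarrow$ (viii) by contraposition. Suppose $I \gidealof J \gidealof C^*(E,\mys)$ are gauge invariant with $J/I$ Morita equivalent to $B$, where $B$ is $\K^\sim$ or $(C(\T)\otimes\bK)^\sim$. As $B$ is unital and Morita equivalence of separable algebras is stable isomorphism, there is a projection $e \in (J/I)\otimes\bK$ with $e\big((J/I)\otimes\bK\big)e \cong B$; since $e$ lies in the ideal $J/I$ this corner equals $e\big((C^*(E,\mys)/I)\otimes\bK\big)e$, a corner of the stabilized quotient. Assuming (i), the quotient $C^*(E,\mys)/I$ is a semiprojective finite graph algebra by Proposition \ref{qissp}, and by Theorem \ref{amp} together with Lemma \ref{stdform} the class $[e]$ is carried by a model projection; hence the argument of Proposition \ref{cornissp}, which uses only this, shows that the corner is semiprojective. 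This is absurd, because neither $\K^\sim$ nor $(C(\T)\otimes\bK)^\sim$ is weakly semiprojective (as recalled before Theorem \ref{nec}), so neither is semiprojective.

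It remains to prove (viii) $\Rightarrow$ (i), which I would do through its contrapositive together with the equivalence (i) $\Leftrightarrow$ (iv) of Theorem \ref{Thm:finite}: assuming $C^*(E,\mys)$ is not semiprojective, some $\Omega_v$ has (FQ) relative to $\mys$, and since $E^0$ is finite Remark \ref{FQremark} rules out (AQ), leaving (MQ) at some $w \not\in \mys$ or (TQ) at some $w$. In either case I would isolate, by a pair of \naps $(H_I,\myr_I) \subseteq (H_J,\myr_J)$, the two-level layer consisting of $w$ (respectively the unique first returned cycle on $w$) together with the infinite receiver $v$ and the infinitely many paths from $w$ to $v$ obtained by following the finitely many paths $w \to \Omega_v$ by the infinitely many edges from $\Omega_v$ into $v$. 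Theorem \ref{idealasgraph} then identifies the gauge-invariant subquotient $J/I$ with a graph algebra $C^*(G')$ on a finite graph $G'$ whose only infinite receiver is $v$, and which has no cycle in the (MQ) case and a single entry-free cycle in the (TQ) case. A full-corner computation via Proposition \ref{cornerisgraph}, at $\{w,v\}$ respectively at a cycle vertex together with $v$, exhibits a copy of $\K^\sim$ respectively $(C(\T)\otimes\bK)^\sim$ as a full corner of $C^*(G')$, so that $J/I$ is Morita equivalent to it and (viii) fails.

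The routine parts are the specializations $p=p_{E^0}$ and $I=0$ and the stable-isomorphism bookkeeping; the main obstacle is the construction in the last paragraph. The delicate point is to choose the two \naps so that the layer $J/I$ is exactly the clean two-level graph $G'$: one must use the finiteness built into (MQ) and (TQ) to prevent extraneous cycles and further infinite receivers from entering the layer, and the hypothesis $w\not\in\mys$ in the (MQ) case to turn $w$ legitimately into a source, i.e.\ a breaking vertex, of the subquotient. Once $G'$ is pinned down, recognizing $C^*(G')$ up to Morita equivalence is a direct application of Proposition \ref{cornerisgraph}.
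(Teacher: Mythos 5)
Your proposal is correct and takes essentially the same approach as the paper: it establishes (i) $\Leftrightarrow$ (vi) $\Leftrightarrow$ (vii) from Propositions \ref{qissp} and \ref{cornissp}, derives from the failure of (viii) a non-semiprojective corner of a gauge-invariant quotient, and proves the contrapositive of (viii) $\Rightarrow$ (iv) by realizing, from an (MQ) or (TQ) configuration for some $\Omega_v$, a gauge-invariant subquotient $J/I$ Morita equivalent to $\K^\sim$ or $(C(\T)\otimes \bK)^\sim$. The only cosmetic differences are that the paper works with a full projection in $J/I$ itself rather than in the stabilization, and constructs the offending subquotient by first passing to the quotient $C^*(F)$ and then cutting down by the model projection $p_{V,X}$ with $X_v=\{e\in vF^1\mid |s(e)F^1v|<\infty\}$ (via Propositions \ref{passto} and \ref{cornerisgraph}) rather than via Theorem \ref{idealasgraph}.
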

\begin{proof}
We get equivalence of (i), (vi) and (vii) by Propositions \ref{qissp} and \ref{cornissp}. Supposing that  (viii) fails, take a full projection $p$ in $J/I$. Then $p(J/I)p$ is Morita equivalent to a non-semiprojective $C^*$-algebra by assumption, and hence not itself semiprojective, proving that (vii) fails. 

We prove that (viii)$\Longrightarrow$ (iv) by contraposition. When $\Omega_v$ has (FQ) we define an ideal $I$ as in the proof of Proposition \ref{nec} using an admissible pair $(E\backslash  F,E^0\reg)$ where $F=s(xE^\dagger)$ is given by  a vertex $x$ in $E^0_1\cup (E^0_0\setminus \mys)$. Inside $C^*(E,\mys)/I\simeq C^*(F)$ we consider the model projection $q=p_{V,X}$ with
$V=\{v\}\cup xF^*\Omega _v$ and
\[
X_v=\{e\in vF^1\mid |s(e)F^1v|<\infty\}
\]
and see by Propositions \ref{passto} and \ref{cornerisgraph} that $qC^*(F)q$ is on the desired form, as illustrated in Figure \ref{likethis}. 
\end{proof}

We were lead to realize the veracity of (viii), and consequently of (vi) and (vii) by extensive experimentation as explained in Remark \ref{alltwobytwo}, cf.\ \cite{serj:ciugc}.

\begin{corollary}\label{oneideal}
The following  are equivalent for a unital graph algebra $C^*(E)$
with precisely one ideal: 
\begin{enumerate}[(i)]
\item $C^*(E)$ is semiprojective
\item $C^*(E)$ is not AF
\end{enumerate}
\end{corollary}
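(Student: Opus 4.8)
The plan is to use the rigidity forced by having a single nontrivial ideal to collapse the list of equivalent conditions in Theorems \ref{Thm:finite} and \ref{Thm:finiteII}. Write $I$ for the unique nontrivial ideal of $C^*(E)$. First I would record that $I$ is gauge invariant: for each $z\in\T$ the automorphism $\gamma_z$ carries $I$ to another nontrivial ideal, which by uniqueness must again be $I$. Thus all three ideals $0,I,C^*(E)$ are gauge invariant, and the standard correspondence between ideals of $I$ (respectively $C^*(E)/I$) and ideals of $C^*(E)$ below $I$ (respectively above $I$) shows that both $I$ and $C^*(E)/I$ are simple.

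For (i)$\Rightarrow$(ii) I would prove the contrapositive and assume $C^*(E)$ is AF. A finite-dimensional \CA\ $\bigoplus_{j=1}^{k}M_{n_j}$ has $2^{k}-2$ nontrivial ideals, a value never equal to $1$, so $C^*(E)$ must be infinite dimensional. Since $E^0$ is finite and an AF graph algebra has no cycles, infinite dimensionality forces an infinite receiver $v$; as $E^0$ is finite, some single vertex $w$ must then emit infinitely many edges into $v$, so $w\in\Omega_v$ and $p_{\Omega_v}\neq0$. An AF algebra is stably finite, hence has no nonzero properly infinite projection, so $p_{\Omega_v}$ is not properly infinite and Theorem \ref{Thm:finite}(v) shows $C^*(E)$ is not semiprojective.

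For (ii)$\Rightarrow$(i) I would once more pass to the contrapositive and assume $C^*(E)$ is not semiprojective. By Theorem \ref{Thm:finiteII}(viii) some subquotient $J/I'$ (with $I'\gidealof J$) is Morita equivalent to $\K^\sim$ or $(C(\T)\otimes\bK)^\sim$. Every subquotient of $C^*(E)$ is one of $0$, $I$, $C^*(E)/I$, or $C^*(E)$; since $I$ and $C^*(E)/I$ are simple while $\K^\sim$ and $(C(\T)\otimes\bK)^\sim$ are not, the offending subquotient can only be $C^*(E)$ itself. Now Morita equivalence induces an isomorphism of ideal lattices, and $C^*(E)$ has exactly three ideals, as does $\K^\sim$, whereas $(C(\T)\otimes\bK)^\sim$ has a continuum of ideals arising from $C(\T)$. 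Hence $C^*(E)$ is Morita equivalent to $\K^\sim$; since AF-ness is a Morita (indeed stable-isomorphism) invariant for separable \CA s, it follows that $C^*(E)$ is AF.

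The step I expect to be most delicate is the bookkeeping in the second direction: one must be certain that the non-semiprojectivity can only be carried by the whole algebra and that $C^*(E)$ is distinguished from $(C(\T)\otimes\bK)^\sim$. Both points hinge on the simplicity of $I$ and $C^*(E)/I$ together with the fact that $C^*(E)$ has precisely three ideals, the former excluding the two simple subquotients and the latter separating $\K^\sim$ (three ideals) from $(C(\T)\otimes\bK)^\sim$ (infinitely many). A small supporting fact to confirm in the first direction is that an infinite-dimensional AF graph algebra on a finite vertex set necessarily has an infinite receiver, which is exactly what supplies the nonzero, non-properly-infinite projection needed for Theorem \ref{Thm:finite}(v).
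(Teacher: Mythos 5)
Your proof is correct and follows exactly the route the paper intends for this (unproved) corollary: condition (v) of Theorem \ref{Thm:finite} handles (i)$\Rightarrow$(ii) via stable finiteness of AF algebras, and condition (viii) of Theorem \ref{Thm:finiteII} handles (ii)$\Rightarrow$(i), with the three-element ideal lattice forcing the offending subquotient to be $C^*(E)$ itself and ruling out $(C(\T)\otimes\bK)^\sim$. The bookkeeping you flag as delicate is carried out correctly.
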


\begin{figure}
\begin{center}
\begin{tabular}{ccc}
\makebox[3.2cm][t]{\includegraphics[width=3cm]{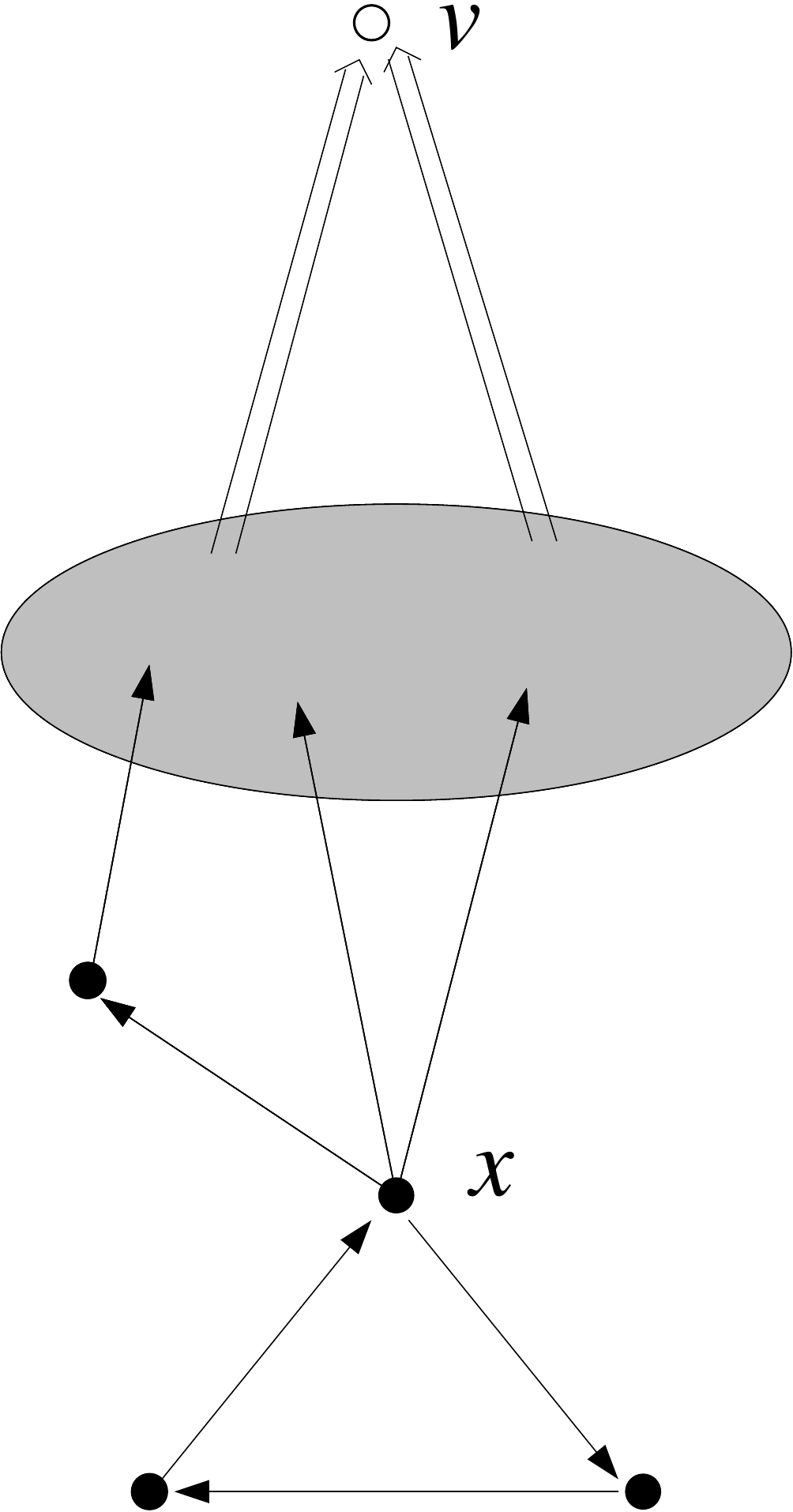}\rule{0mm}{3cm}}&\includegraphics[width=3cm]{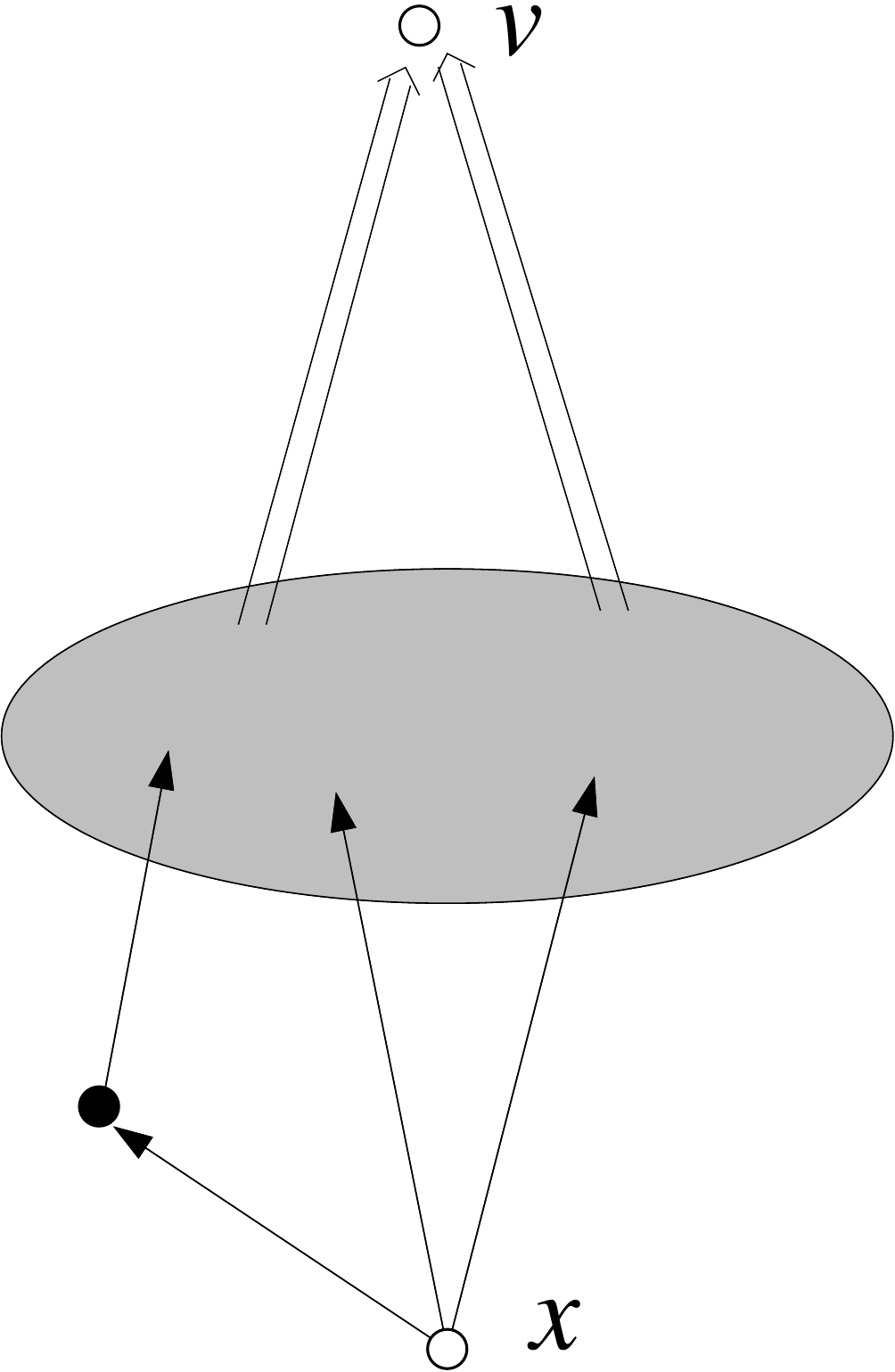}\\
\end{tabular}
\end{center}
\caption{Configurations in the non-semiprojective case}\label{likethis}
\end{figure}

As seen in \cite{setkermt:iagc}, any unital graph algebra with a single non-trivial ideal which is AF is isomorphic to a graph algebra given by 
\[
\xymatrix{
\circ\ar[r]&\bullet\ar[r]&\cdots\ar[r]&\bullet\ar@{=>}[r]&\circ\ar[r]&\bullet\ar[r]&\cdots\ar[r]&\bullet}
\]
with $m\geq 0$ edges to the left of the infinite emitter and $n\geq 0$ edges to the right of the infinite reciever.
There are many  graph $C^*$-algebras with precisely two non-trivial ideals which are neither AF nor semiprojective.  One example is given by the graph given by $\left[\begin{smallmatrix}2& 0\\ \infty& 0\end{smallmatrix}\right]$ from Example \ref{alltwobytwo}.

\newcommand{\etalchar}[1]{$^{#1}$}
\providecommand{\bysame}{\leavevmode\hbox to3em{\hrulefill}\thinspace}
\providecommand{\MR}{\relax\ifhmode\unskip\space\fi MR }
\providecommand{\MRhref}[2]{%
  \href{http://www.ams.org/mathscinet-getitem?mr=#1}{#2}
}
\providecommand{\href}[2]{#2}


\begin{thebibliography}{HLM{\etalchar{+}}14}

\bibitem[AMP07]{pamamep:nkga}
P.~Ara, M.~A. Moreno, and E.~Pardo, \emph{Nonstable {$K$}-theory for graph
  algebras}, Algebr. Represent. Theory \textbf{10} (2007), no.~2, 157--178.
  \MR{2310414 (2008b:46094)}

\bibitem[BHRS02]{bhrs:iccig}
T.~Bates, J.~H. Hong, I.~Raeburn, and W.~Szyma\'nski, \emph{The ideal structure
  of the {$C^*$}-algebras of infinite graphs}, Illinois J. Math. \textbf{46}
  (2002), 1159--1176.

\bibitem[Bla85]{bb:stc}
B.~Blackadar, \emph{Shape theory for {$C^*$}-algebras}, Math.\ Scand.
  \textbf{56} (1985), 249--275.

\bibitem[Bla04]{bb:ssc}
\bysame, \emph{Semiprojectivity in simple {$C^*$}-algebras}, Adv. Stud. Pure
  Math. \textbf{38} (2004), 1--17.

\bibitem[BPRS00]{tbdpirws:crg}
T.~Bates, D.~Pask, I.~Raeburn, and W.~Szyma\'nski, \emph{{$C^*$}-algebras of
  row-finite graphs}, New York J. Math. \textbf{6} (2000), 307--324.

\bibitem[CET12]{tmcsemt:imkga}
T.M. Carlsen, S.~Eilers, and M.~Tomforde, \emph{Index maps in the {$K$}-theory
  of graph algebras}, J. $K$-Theory \textbf{9} (2012), 385--406.

\bibitem[DJS03]{kdjhhws:srgatigal}
K.~Diecke, J.H.Hong, and W.~Szyma\'nski, \emph{Stable rank of graph algebras.
  {T}ype {I} graph algebras and their limits}, Indiana Univ. Math. J.
  \textbf{52} (2003), 963--979.

\bibitem[EJ]{serj:ciugc}
S.~Eilers and R.~Johansen, \emph{Computing invariants for unital graph
  {$C^*$}-algebras}, in preparation.

\bibitem[EKRT14]{setkermt:iagc}
S.~Eilers, T.~Katsura, E.~Ruiz, and M.~Tomforde, \emph{Identifying
  {AF}-algebras that are graph {$C^*$}-algebras}, J. Funct. Anal. \textbf{266}
  (2014), 3968--3996.

\bibitem[EL99]{setal:ccsr}
S.~Eilers and T.A. Loring, \emph{Computing contingencies for stable relations},
  Internat. J. Math. \textbf{10} (1999), 301--326.

\bibitem[ELP98]{elp:sar}
S.~Eilers, T.A. Loring, and G.K. Pedersen, \emph{Stability of anticommutation
  relations. {An} application of noncommutative {$CW$} complexes}, J. reine.
  angew. Math. \textbf{499} (1998), 101--143.



\bibitem[End14]{de:sfcsc}
\bysame, \emph{Subalgebras of finite codimension in semiprojective
  {$C^*$}-algebras}, preprint, arXiv:1405.2750, 2014.

\bibitem[End15]{de:blackadar}
D.~Enders, \emph{Semiprojectivity for {K}irchberg algebras}, preprint, arXiv:1507.06091, 2015.

\bibitem[ERS12]{seerapws:agc}
S.~Eilers, E.~Ruiz, and A.P.W. S{\o}rensen, \emph{Amplified graph
  {$C^*$}-algebras}, M{\"u}nster J. Math \textbf{5} (2012), 121--150.

\bibitem[HLM{\etalchar{+}}14]{dhmlmmer:nklpa}
D.~{Hay}, M.~{Loving}, M.~{Montgomery}, E.~{Ruiz}, and K.~{Todd},
  \emph{{Non-stable K-theory for Leavitt path algebras}}, Rocky Mountain J.
  Math. \textbf{44} (2014), no.~6, 1817--1850.

\bibitem[Kat04]{tk:cac}
T.~Katsura, \emph{On {$C^*$}-algebras associated with {$C^*$}-correspondences},
  J. Funct. Anal. \textbf{217} (2004), no.~2, 366--401. \MR{MR2102572
  (2005e:46099)}

\bibitem[Kat07]{tk:iscac}
\bysame, \emph{Ideal structure of {$C^*$}-algebras associated with
  {$C^*$}-correspondences}, Pacific J. Math. \textbf{230} (2007), no.~1,
  107--145. \MR{MR2413377 (2009b:46118)}

\bibitem[KBR06]{kbr}
B.~Kolman, R.C. Busby, and S.C. Ross, \emph{Discrete mathematical structures},
  Pearson/Prentice Hall, 2006.

\bibitem[KPRR97]{KPRR}
A.~Kumjian, D.~Pask, I.~Raeburn, and J.~Renault, \emph{Graphs, groupoids, and
  {C}untz-{K}rieger algebras}, J. Funct. Anal. \textbf{144} (1997), no.~2,
  505--541. \MR{1432596 (98g:46083)}

\bibitem[KR00]{ekmr:npic}
E.~Kirchberg and M.~R{\o}rdam, \emph{Non-simple purely infinite
  ${C}^*$-algebras}, Amer. J. Math. \textbf{122} (2000), 637--666.

\bibitem[Lin07]{hl:wspisc}
H.~Lin, \emph{Weak semiprojectivity in purely infinite simple
  {$C^*$}-algebras}, Canad. J. Math. \textbf{59} (2007), no.~2, 343--371.
  \MR{2310621 (2011g:46099)}

\bibitem[Lor97]{tal:lsppc}
T.A. Loring, \emph{Lifting solutions to perturbing problems in
  {$C^*$}-algebras}, Fields Institute Monographs, vol.~8, American Mathematical
  Society, Providence, RI, 1997.

\bibitem[MT04]{psmmt:atc}
P.S. Muhly and M.~Tomforde, \emph{Adding tails to {$C^*$}-correspondences},
  Doc. Math. \textbf{9} (2004), 79--106. \MR{2054981 (2005a:46117)}

\bibitem[R{\o}r98]{mr:sfp}
M.~R{\o}rdam, \emph{On sums of finite projections}, Operator algebras and
  operator theory ({S}hanghai, 1997), Contemp. Math., vol. 228, Amer. Math.
  Soc., Providence, RI, 1998, pp.~327--340. \MR{1667668 (2000a:46098)}

\bibitem[R{\o}r03]{mr:scfip}
\bysame, \emph{A simple {$C^*$}-algebra with a finite and an infinite
  projection}, Acta Math. \textbf{191} (2003), no.~1, 109--142. \MR{2020420
  (2005m:46096)}

\bibitem[RT14]{ermt:iga}
E.~Ruiz and M.~Tomforde, \emph{Ideals in graph algebras}, Algebr. Represent.
  Theory \textbf{17} (2014), no.~3, 849--861.

\bibitem[S{\o}r13]{apws:ccb}
A.P.W. S{\o}rensen, \emph{On a counterexample to a conjecture by {B}lackadar},
  Operator Algebra and Dynamics: NordForsk network closing conference (T.M.
  Carlsen, S.~Eilers, G.~Restorff, and S.~Silvestrov, eds.), Springer
  Proceedings in Mathematics \& Statistics, vol.~58, 2013.

\bibitem[Spi07]{js:wspic}
J.~Spielberg, \emph{Weak semiprojectivity for purely infinite
  {$C^*$}-algebras}, Canad. Math. Bull. \textbf{50} (2007), no.~3, 460--468.
  \MR{2344181 (2008k:46171)}

\bibitem[Spi09]{js:scpic}
\bysame, \emph{Semiprojectivity for certain purely infinite {$C^*$}-algebras},
  Trans. Amer. Math. Soc. (2009), 2805--2830.

\bibitem[ST12]{apwsht:cscc}
A.P.W. S{\o}rensen and H.~Thiel, \emph{A characterization of semiprojectivity
  for commutative {$C^*$}-algebras}, Proc. London Math. Soc. (3) \textbf{105}
  (2012), no.~5, 1021--1046.

\bibitem[Szy02]{ws:scdg}
W.~Szyma{\'n}ski, \emph{On semiprojectivity of {$C^*$}-algebras of directed
  graphs}, Proc. Amer. Math. Soc. \textbf{130} (2002), no.~5, 1391--1399
  (electronic). \MR{MR1879962 (2003a:46083)}

\end{thebibliography}
\end{document}